\date{}
\theoremstyle{plain}
\newtheorem{theorem}{Theorem}
\newtheorem{proposition}{Proposition}
\newtheorem{lemma}{Lemma}
\newtheorem{corollary}{Corollary}
\theoremstyle{definition}
\newtheorem{definition}{Definition}
\newtheorem{remark}{Remark}
\numberwithin{equation}{section}
\numberwithin{theorem}{section}
\numberwithin{proposition}{section}
\numberwithin{lemma}{section}
\numberwithin{corollary}{section}
\numberwithin{definition}{section}
\numberwithin{remark}{section}
\newcommand{\R}{\mathbb{R}}
\newcommand{\N}{\mathbb{N}}
\newcommand{\Z}{\mathbb{Z}}
\renewcommand{\C}{\mathbb{C}}
\newcommand{\D}{\mathcal{D}}
\newcommand{\T}{\mathbb{T}}
\newcommand{\B}{{\mathcal B}}
\newcommand{\esslim}{\operatornamewithlimits{ess\,lim}}
\newcommand{\sgn}{\operatorname{sign}}
\newcommand{\meas}{\operatorname{meas}}
\newcommand{\Var}{\operatorname{Var}}
\newcommand{\pr}{\operatorname{pr}}
\newcommand{\<}{\langle}
\renewcommand{\>}{\rangle}
\newcommand{\MV}{\operatorname{MV}}
\newcommand{\const}{\mathrm{const}}
\renewcommand{\div}{\operatorname{div}}
\newcommand{\supp}{\operatorname{supp}}
\def\Xint#1{\mathchoice
{\XXint\displaystyle\textstyle{#1}}%
{\XXint\textstyle\scriptstyle{#1}}%
{\XXint\scriptstyle\scriptscriptstyle{#1}}%
{\XXint\scriptscriptstyle\scriptscriptstyle{#1}}%
\!\int}
\def\XXint#1#2#3{{\setbox0=\hbox{$#1{#2#3}{\int}$ }
\vcenter{\hbox{$#2#3$ }}\kern-.57\wd0}}
\def\dashint{\Xint-}
\begin{document}
\title{Decay of periodic entropy solutions to degenerate nonlinear parabolic equations}

\author{Evgeniy Yu. Panov}
\maketitle
\begin{abstract}
Under a precise nonlinearity-diffusivity condition we establish the decay of space-periodic entropy solutions of a multidimensional degenerate nonlinear parabolic equation.
\end{abstract}

\section{Introduction}
In the half-space $\Pi=\R_+\times\R^n$, $\R_+=(0,+\infty)$, we consider the nonlinear parabolic equation
\begin{equation}\label{1}
u_t+\div_x(\varphi(u)-a(u)\nabla_x u)=0,
\end{equation}
where the flux vector $\varphi(u)=(\varphi_1(u),\ldots,\varphi_n(u))$ is merely continuous: $\varphi_i(u)\in C(\R)$, $i=1,\ldots,n$, and the diffusion matrix $a(u)=(a_{ij}(u))_{i,j=1}^n$ is Lebesgue measurable and bounded:
$a_{ij}(u)\in L^\infty(\R)$, $i,j=1,\ldots,n$. We also assume that the matrix ${a(u)\ge 0}$
(nonnegative definite). This matrix may have nontrivial kernel. Hence (\ref{1}) is a degenerate
(hyperbolic-parabolic) equation. In particular case $a\equiv 0$ it reduces to a first order conservation law
 \begin{equation}\label{con}
u_t+\div_x \varphi(u)=0.
\end{equation}
Equation (\ref{1}) is endowed with the initial condition
\begin{equation}\label{2}
u(0,x)=u_0(x).
\end{equation}

Let $g(u)\in BV_{loc}(\R)$ be a function of bounded variation on any segment in $\R$.
We will need the bounded linear operator $T_g:C(\R)/C\to C(\R)/C$, where $C$ is the space of constants. This operator is defined up to an additive constant by the relation
\begin{equation}\label{efl}
T_g(f)(u)=g(u-)f(u)-\int_0^u f(s)dg(s),
\end{equation}
where
$\displaystyle g(u-)=\lim_{v\to u-} g(v)$ is the left limit of $g$ at the point $u$, and the integral in (\ref{efl}) is understood in accordance with the formula
$$
\int_0^u f(s)dg(s)=\sgn u\int_{J(u)} f(s)dg(s),
$$
where $\sgn u=1$,
$J(u)$ is the interval $[0,u)$ if $u>0$, and $\sgn u=-1$, $J(u)=[u,0)$ if $u\le 0$.
Observe that $T_g(f)(u)$ is continuous even in the case of discontinuous $g(u)$. For instance, if $g(u)=\sgn(u-k)$ then
$T_g(f)(u)=\sgn(u-k)(f(u)-f(k))$. Notice also that for $f\in C^1(\R)$ the operator $T_g$ is uniquely determined by the identity $T_g(f)'(u)=g(u)f'(u)$ (in $\D'(\R)$).
In the case $f'(u),g(u)\in L^2_{loc}(\R)$ the function $T_g(f)\in C(\R)/C$ can be defined by the identity $T_g(f)'(u)=g(u)f'(u)\in L^1_{loc}(\R)$. As is easy to see, in this case
the correspondence $g\to T_g(f)$ is a linear continuous map from $L^2_{loc}(\R)$ into $C(\R)/C$.

We fix some representation of the diffusion matrix $a(u)$ in the form $a(u)=b^\top(u)b(u)$, where $b(u)=(b_{ij}(u))_{i,j=1}^n$ is matrix-valued function with measurable and bounded entries,  $b_{ij}(u)\in L^\infty(\R)$.
We recall the notion of entropy solution of the Cauchy problem (\ref{1}), (\ref{2}) introduced in \cite{ChPer1}.

\begin{definition}\label{def1}
A function $u=u(t,x)\in L^\infty(\Pi)$ is called an entropy solution (e.s. for short) of (\ref{1}), (\ref{2}) if
the following conditions hold:

(i) for each $r=1,\ldots,n$ the distributions
\begin{equation}\label{pr}
\div_x B_r(u(t,x))\in L^2_{loc}(\Pi),
\end{equation}
where vectors $B_r(u)=(B_{r1}(u),\ldots,B_{rn}(u))\in C(\R,\R^n)$, and $B_{ri}'(u)=b_{ri}(u)$, $r,i=1,\ldots,n$;

(ii) for every $g(u)\in C^1(R)$, $r=1,\ldots,n$
\begin{equation}\label{cr}
\div_x T_g(B_r)(u(t,x))=g(u(t,x))\div_x B_r(u(t,x)) \ \mbox{ in } \D'(\Pi);
\end{equation}

(iii) for any convex function $\eta(u)\in C^2(\R)$
\begin{eqnarray}\label{entr}
\eta(u)_t+\div_x(T_{\eta'}(\varphi)(u))-D^2\cdot T_{\eta'}(A(u))+ \nonumber\\ \eta''(u)\sum_{r=1}^n (\div_x B_r(u))^2\le 0 \ \mbox{ in }
\D'(\Pi);
\end{eqnarray}

(iv) $\displaystyle\esslim_{t\to 0} u(t,\cdot)=u_0$ in $L^1_{loc}(\R^n)$.
\end{definition}
In (\ref{entr}) the operator
$$
D^2\cdot T_{\eta'}(A(u))\doteq\sum_{i,j=1}^n \frac{\partial^2}{\partial x_i\partial x_j}T_{\eta'}(A_{ij}(u)).
$$
Relation (\ref{entr}) means that for any non-negative test function $f=f(t,x)\in C_0^\infty(\Pi)$
\begin{eqnarray*}
\int_\Pi [\eta(u)f_t+T_{\eta'}(\varphi)(u)\cdot\nabla_x f+T_{\eta'}(A(u))\cdot D^2_x f]dtdx\ge 0,
\end{eqnarray*}
where $D^2_x f$ is the symmetric matrix of second order derivatives of $f$, and "$\cdot$" denotes the standard scalar multiplications of vectors or matrices.

\begin{remark}\label{rem0}
The chain rule postulated in (ii) actually holds for arbitrary locally bounded Borel function $g(u)\in L^\infty_{loc}(\R)$.
First of all, observe that since $B'_r(u)\in L^\infty(\R,\R^n)$ then
\begin{equation}\label{ch1}
T_g(B_r)(u)=\int_0^u g(v)B'_r(v)dv.
\end{equation}
For $R>0$ we consider the set
$F_R$ consisting of bounded Borel functions $g(u)$ such that $\sup |g(u)|\le R$ and the chain rule (\ref{cr}) holds. By (ii)
$F_R$ contains all bounded functions $g(u)\in C^1(\R)$ such that $\sup |g(u)|\le R$. Let $g_l(u)\in F_R$, $l\in\N$, be a sequence which converges pointwise to a function $g(u)$ as $l\to\infty$. It is clear that $g(u)$ is a Borel function and $\sup |g(u)|\le R$. Observe that the functions $g_l(u(t,x))$ are bounded and measurable (because $g_l$ are Borel functions), the sequence $g_l(u(t,x))\to g(u(t,x))$ pointwise as $l\to\infty$, and $\|g_l(u(t,x))\|_\infty\le R$. This implies that
$$
g_l(u(t,x))\div_x B_r(u(t,x))\mathop{\to}_{l\to\infty} g(u(t,x))\div_x B_r(u(t,x)) \ \mbox{ in } L^2_{loc}(\Pi).
$$
From the other hand,
it follows from (\ref{ch1}) that $T_{g_l}(B_r)(u(t,x))$ converges to $T_g(B_r)(u(t,x))$ as $l\to\infty$ uniformly on $\Pi$. Therefore, we can pass to the limit as $l\to\infty$ in relation (\ref{cr}) with $g=g_l$ and derive that (\ref{cr}) holds for our limit function $g(u)$. Thus, $g(u)\in F_R$. We see that $F_R$ is closed under pointwise convergence and by Lebesgue theorem  $F_R$ contains all Borel functions $g$ such that $\sup |g|\le R$. Since $R>0$ is arbitrary, we find that (\ref{cr}) holds for all bounded Borel functions. It only remains to notice that the behavior of $g(u)$ out of the segment $[-M,M]$, where $M=\|u\|_\infty$, does not matter. Therefore, (\ref{cr}) holds for any Borel function bounded on $[-M,M]$, in particular, for each locally bounded Borel function.

Remark also that for correctness of (\ref{cr}) we have to choose the Borel representative for $g(u)$ (recall that $g(u)$ is defined up to equality on a set of full measure and such the representative exists). Notice that $T_g(B_r)(u)$ does not depend on the choice of a Borel representative of $g$. Hence, the right-hand side of (\ref{cr}) does not depend on this choice either.
\end{remark}

If to be precise, in \cite{ChPer1} the representation $a=b^\top b$ was used with $b=a(u)^{1/2}$. In order to make the definition invariant under linear changes of the independent variables, we have to extend the class of admissible representations. For instance, let us introduce the change $y=y(t,x)=qx-tc$, where $q:\R^n\to\R^n$ is a nondegenerate linear map and $c\in\R^n$. This can be written in the coordinate form as
\begin{equation}\label{ch}
y_i=\sum_{j=1}^nq_{ij}x_j-c_it, \quad i=1,\ldots,n.
\end{equation}
As is easy to verify, the function $u=v(t,y(t,x))$ is an e.s. of (\ref{1}), (\ref{2}) with initial data $u_0=v_0(y(0,x))$ if and only if $v(t,y)$ is an e.s. of the problem
$$
v_t+\div_y(\tilde\varphi(v)-\tilde a(v)\nabla_y v)=0, \quad v(0,y)=v_0(y),
$$
where
\begin{equation}\label{after}
\tilde\varphi(v)=q\varphi(v)-cv, \quad \tilde a(v)=qa(v)q^\top,
\end{equation}
corresponding to the representation $\tilde a(v) =(b(v)q^\top)^\top(b(v)q^\top)$, $b(v)q^\top\in L^\infty(\R)$, of the diffusion matrix $\tilde a(v)$. We remark that
$$
\div_y (B(v)q^\top)_r\big|_{y=y(t,x)}=\sum_{l,j=1}^n \partial_{y_l}B_{rj}(v)q_{lj}=\sum_{j=1}^n \partial_{x_j}B_{rj}(u)=\div_x B_r(u).
$$
Recall that $B_r'(u)=b_r(u)=(b_{r1}(u),\ldots,b_{rn}(u))$, $r=1,\ldots,n$.

We underline that the matrix $bq^\top$ is not necessarily symmetric and therefore differs from $\tilde a(v)^{1/2}$.

\medskip
Now, we suppose that the initial function $u_0$ is periodic with a lattice of periods $L$: $u_0(x+e)=u_0(x)$
a.e. in $\R^n$ for all $e\in L$. Denote $\T^n=\R^n/L$ the corresponding torus (which can be identified with a fundamental parallelepiped), $dx$ is the Lebesgue measure on $\T^n$, normalized by the condition $dx(\T^n)=V$, $V$ being volume of a fundamental parallelepiped (observe that $V$ does not depend on its choice). Then there exists a space-periodic e.s. $u=u(t,x)$ of the problem (\ref{1}), (\ref{2}), $u(t,x+e)=u(t,x)$ a.e. in $\Pi$ for all $e\in L$. This can be written as $u\in L^\infty((0,+\infty)\times\T^n)$. This solution can be constructed as a limit of the sequence $u_k$ of solutions to the regularized problem
$$
u_t+\div_x(\varphi_k(u)-a_k(u)\nabla_x u)=0,
$$
with smooth flux vectors $\varphi_k(u)$, and smooth and strictly positive definite diffusion matrices $a_k(u)$, which approximate $\varphi(u)$ and $a(u)$, respectively. For details, we refer to \cite{ChPer1,ChKarl}. It is known \cite[Theorem 1.2]{ChPer1} that e.s. $u(t,x)$ satisfies the maximum principle: $|u(t,x)|\le \|u_0\|_\infty$ for a.e. $(t,x)\in\Pi$.

Our main result is the long time decay property of entropy solutions. Let
$$
L'=\{\xi\in\R^n | \xi\cdot e\in\Z \ \forall e\in L \}
$$
be a dual lattice to the lattice of periods $L$, $$I=\frac{1}{V}\int_{\T^n} u_0(x)dx$$ be the mean value of initial data.

\begin{theorem}\label{thM}
Assume that the following \textbf{nonlinearity-diffusivity condition} holds: for all $\xi\in L'$, $\xi\not=0$ there is no vicinity of $I$, where simultaneously the function $\xi\cdot\varphi(u)$ is affine and the function $a(u)\xi\cdot\xi\equiv 0$. Then
\begin{equation}\label{dec}
\esslim_{t\to+\infty} u(t,x)=I \ \mbox{ in } L^1(\T^n).
\end{equation}
\end{theorem}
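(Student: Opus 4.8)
The plan is to combine a compactness argument for the family of time-shifts with an analysis of the ω-limit set via the entropy inequality and a Fourier/reduced-equation argument exploiting the nonlinearity-diffusivity condition. First I would record the two conserved quantities that survive the evolution: integrating \eqref{1} over $\T^n$ shows $\frac{d}{dt}\int_{\T^n}u\,dx=0$, so the mean value is $I$ for a.e. $t$; and using the convex entropy $\eta(u)=(u-I)^2$ in \eqref{entr} gives, after integration over $\T^n$, the energy–dissipation identity
\begin{equation*}
\frac{d}{dt}\int_{\T^n}(u-I)^2\,dx+2\int_{\T^n}\sum_{r=1}^n(\div_x B_r(u))^2\,dx\le 0,
\end{equation*}
so $t\mapsto\|u(t,\cdot)-I\|_{L^2(\T^n)}$ is nonincreasing and $\sum_r\div_x B_r(u)\in L^2((0,\infty)\times\T^n)$. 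In particular the limit $\lim_{t\to\infty}\|u(t,\cdot)-I\|_{L^2}$ exists; the whole point is to show it is zero.

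Next I would set up the standard long-time-limit machinery: for $s>0$ put $u^s(t,x)=u(s+t,x)$. By the maximum principle the family $\{u^s\}$ is bounded in $L^\infty$, and by the equation the time-derivatives $u^s_t$ are bounded in a suitable negative Sobolev space, so some sequence $u^{s_k}$ with $s_k\to\infty$ converges, say in $C([0,T];w\text{-}L^2(\T^n))$ and in the sense of the kinetic/measure-valued formulation, to a limit. Here I would invoke the reduction to measure-valued solutions: because the problem has a contraction/uniqueness property (from \cite{ChPer1}), or alternatively by passing to the limit directly in Definition \ref{def1}, the limit is again an entropy solution $v$ on $(0,\infty)\times\T^n$ with the same mean $I$ and with $\|v(t,\cdot)-I\|_{L^2}\equiv\lim_{t\to\infty}\|u(t,\cdot)-I\|_{L^2}=:d$ constant in $t$. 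Feeding $\eta(u)=(u-I)^2$ into \eqref{entr} for $v$ and using that its energy is constant forces $\div_x B_r(v)=0$ in $L^2$ for every $r$; combined with the chain-rule property (ii)–(\ref{cr}) (valid for all Borel $g$ by Remark \ref{rem0}) this yields $\div_x T_g(B_r)(v)=0$ for all $r$ and all bounded Borel $g$, i.e. the diffusion term is effectively inert along $v$.

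It remains to show that such a stationary-energy entropy solution $v$ must be the constant $I$. I would test \eqref{entr} for $v$ with smooth entropies and pass through the kinetic formulation: the defect measure is zero (energy is constant), so $v$ satisfies $\partial_t\eta(v)+\div_x T_{\eta'}(\varphi)(v)-D^2\!\cdot T_{\eta'}(A(v))=0$ as an equality for all convex $\eta$, hence $v$ is an exact (two-sided) entropy solution of the conservation-law–type equation, and moreover the parabolic term contributes nothing. Expanding in a Fourier series on $\T^n$, for each dual frequency $\xi\in L'\setminus\{0\}$ the corresponding mode of $v$ is governed by the one-dimensional behavior of $\xi\cdot\varphi$ together with $a(v)\xi\cdot\xi$; the nonlinearity-diffusivity hypothesis says precisely that near $I$ one cannot have $\xi\cdot\varphi$ affine and $a\xi\cdot\xi\equiv0$ simultaneously. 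This is exactly the genuine-nonlinearity/non-degeneracy condition needed to run a velocity-averaging or Tartar $H$-measure argument: either the flux term provides dispersion (averaging lemma kills the oscillation) or the diffusion term provides a nontrivial $L^2$ bound on $\div_x B_r(v)$ that we already know vanishes, a contradiction unless the $\xi$-mode is trivial. Concluding that all nonzero modes vanish gives $v\equiv I$, whence $d=0$ and \eqref{dec} follows; the convergence upgrades from weak $L^2$ to strong $L^1$ because $\|u(t,\cdot)-I\|_{L^2}\to 0$.

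The main obstacle I anticipate is the last step — rigorously extracting, from the combined hypothesis on $\xi\cdot\varphi$ and $a(u)\xi\cdot\xi$, the decay of each Fourier mode when $\varphi$ is merely continuous and $a$ merely bounded measurable. One must handle the hyperbolic part (non-degeneracy of $\xi\cdot\varphi$) and the genuinely parabolic part (positivity of $a(u)\xi\cdot\xi$ on a set accumulating at $I$) in a unified way, presumably via an $H$-measure or compensated-compactness localization at the frequency $\xi$, carefully using the chain rule \eqref{cr} to make sense of $T_g$ of the low-regularity nonlinearities. Making the measure-valued/kinetic reduction work for this degenerate parabolic problem with only $L^\infty$ diffusion is the technical heart of the argument.
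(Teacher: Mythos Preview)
Your overall architecture---energy dissipation, extraction of an $\omega$-limit object with inert diffusion, and then a frequency-by-frequency contradiction via the nonlinearity-diffusivity hypothesis---is reasonable, but there is a genuine gap at the compactness step that the paper handles by a quite different mechanism.

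The first difficulty is your passage from the time-shifted family $u^{s_k}$ to a limit $v$ that is again an \emph{entropy solution}. With merely continuous $\varphi$ and $L^\infty$ diffusion matrix, you only have weak-$*$ compactness a priori; the limit is in general a measure-valued function $\nu_{t,x}$, not a function. You invoke the uniqueness/contraction theory of \cite{ChPer1}, but that theory is formulated for entropy solutions, not for measure-valued limits, and proving that a weak limit of entropy solutions of (\ref{1}) is again an entropy solution is essentially as hard as the decay result itself. Moreover, even if one grants an entropy-solution limit $v$ with $\div_x B_r(v)=0$, the conclusion ``$v\equiv I$'' does not reduce to the known decay for conservation laws: the nonlinearity-diffusivity condition permits $\xi\cdot\varphi$ to be affine near $I$ in directions where the diffusion carries all the non-degeneracy, and your sketched dichotomy (``averaging kills the oscillation, or diffusion gives a nontrivial $L^2$ bound'') does not explain how the single constraint $\div_x B_r(v)=0$ forces the $\xi$-mode to vanish in that case.

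The paper bypasses both issues by a different device: after an algebraic reduction (Lemma~\ref{lema} and Section~\ref{sec3}) splitting $\R^n$ into a ``nonlinear'' block $\R^d$ and an ``affine/diffusive'' block $(\R^d)^\perp$, it studies the anisotropically rescaled sequence $u_k(t,x)=u(k^2t,k^2\tilde x+k\bar x)$. This scaling has three effects that simple time-shifts do not: (a) by the global energy bound (Lemma~\ref{lemA2}) the diffusive fluxes $\div_x(B_k)_r(u_k)\to 0$ in $L^2_{loc}$, so the parabolic part becomes compact without any hypothesis; (b) by Corollary~\ref{cor2} and Remark~\ref{rem3} the measure-valued limit $\nu_{t,x}$ is a \emph{constant} probability measure $\nu$; (c) by a Fourier argument (Proposition~\ref{pro2}) the H-measures $\mu^{pp\pm}_{t,x}$ of the sequence concentrate on a discrete set $S_0$ adapted to the lattice. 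The paper then never needs to identify $\nu$ with an entropy solution; instead it feeds the pre-compactness of the cut-off fluxes (relations (\ref{20}), (\ref{21})) into the indexed H-measure localization principle (Propositions~\ref{pro3}--\ref{proL1}, Lemma~\ref{lem2}) to produce, for any $p\in\supp\nu$, a direction $(\tau,\xi)\in S_0$ on which both $\tau u+\tilde\xi\cdot\varphi(u)$ and $B_r(u)\cdot\bar\xi$ are constant near $p$, contradicting (R2)--(R4) unless $\nu=\delta_I$. The key missing idea in your proposal is precisely this scaling-plus-H-measure machinery, which replaces the hard ``limit is an e.s.'' step by a direct structural constraint on the defect measure.
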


The decay property was established in \cite{ChPer2} under the more restrictive version of the nonlinearity-diffusivity condition, in the case of locally Lipschitz flux vector when $\varphi'(u)\in L^\infty_{loc}(\R,\R^n)$. This conditions reads: for all $\tau\in\R$, $\xi\in L'$, $\xi\not=0$
\begin{equation}\label{nd}
\meas\{ \ u\in\R, |u|\le\|u_0\|_\infty \ | \ \tau u+\xi\cdot\varphi'(u)=a(u)\xi\cdot\xi=0 \ \}=0.
\end{equation}
In the hyperbolic case $a\equiv 0$
decay property (\ref{dec}) was proved in \cite{PaNHM}, see also the previous papers \cite{ChF,Daf,PaAIHP1}.

Let us show that our nonlinearity-diffusivity condition is exact. In fact, if it fails, we can find an interval $(I-\delta,I+\delta)$, a nonzero vector $\xi\in L'$, and a constant $c\in\R$ such that $\xi\cdot\varphi(u)-cu\equiv\const$, $a(u)\xi\cdot\xi\equiv 0$ on this interval. Then, as is easily verified, the function $u(t,x)=I+\delta\sin(2\pi(\xi\cdot x-ct))$ is an $x$-periodic (with the lattice of periods $L$) e.s. of (\ref{1}), (\ref{2}) with the periodic initial data $u_0(x)=I+\delta\sin(2\pi(\xi\cdot x))$, which has the mean value $I$. It is clear that $u(t,x)$ does not converges as $t\to+\infty$ in $L^1(\T^n)$, and the decay property fails.

\section{Some properties of periodic entropy solutions}

\begin{lemma}\label{lemA1} For each convex $\eta(u)$ there exists a locally finite nonnegative $x$-periodic Borel measure $\mu_\eta$ on $\Pi$ such that
\begin{equation}\label{distr1}
\eta(u)_t+\div_x(T_{\eta'}(\varphi)(u))-D^2\cdot T_{\eta'}(A(u))=-\mu_\eta \ \mbox{ in } \D'(\Pi).
\end{equation}
This measure can be identified with a finite measure on $\R_+\times\T^n$. Moreover, for almost each (a.e.) $t_1,t_2$, $0<t_1<t_2$,
\begin{equation}\label{est1}
\mu((t_1,t_2)\times\T^n)\le \int_{\T^n} \eta(u(t_1,x))dx-\int_{\T^n} \eta(u(t_2,x))dx.
\end{equation}
\end{lemma}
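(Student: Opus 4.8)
Write $\Lambda_\eta$ for the left-hand side of \eqref{distr1}. The plan is to read off $\mu_\eta$ from the entropy inequality \eqref{entr} via the fact that a nonpositive distribution is a nonpositive measure, and then to convert \eqref{distr1} into the slicewise bound \eqref{est1} by testing against functions of $t$ alone. First I would treat a convex $\eta\in C^2(\R)$: since $\eta''\ge0$ and $\div_x B_r(u)\in L^2_{loc}(\Pi)$ by (i), condition (iii) gives
\[
\Lambda_\eta=\eta(u)_t+\div_x(T_{\eta'}(\varphi)(u))-D^2\cdot T_{\eta'}(A(u))\le-\eta''(u)\sum_{r=1}^n(\div_x B_r(u))^2\le0\quad\text{in }\D'(\Pi),
\]
so $\Lambda_\eta$ is a nonpositive distribution on $\Pi$ and by the Riesz--Schwartz representation $\Lambda_\eta=-\mu_\eta$ for a locally finite nonnegative Borel measure $\mu_\eta$ on $\Pi$; this is \eqref{distr1}. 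Since $u(t,x+e)=u(t,x)$ a.e. for all $e\in L$, every term of $\Lambda_\eta$ is $x$-periodic, hence $\mu_\eta$ is $x$-periodic and descends to a Borel measure $\mu$ on $\R_+\times\T^n$ (independent of the chosen fundamental parallelepiped $P$), which is locally finite there because $\mu([t_1,t_2]\times\T^n)=\mu_\eta([t_1,t_2]\times\overline{P})<\infty$ for $0<t_1<t_2$.

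For a merely continuous convex $\eta$ I would mollify, $\eta_\varepsilon=\eta*\rho_\varepsilon\in C^\infty$ still convex, so $\Lambda_{\eta_\varepsilon}\le0$ by the previous step. Since $\eta_\varepsilon\to\eta$ locally uniformly and $\eta_\varepsilon'\to\eta'$ with locally uniform bounds on the nondecreasing function $\eta'$, formula \eqref{efl}, the continuity properties of the operators $T_g$, and $|u|\le\|u_0\|_\infty$ give $\eta_\varepsilon(u)\to\eta(u)$, $T_{\eta_\varepsilon'}(\varphi)(u)\to T_{\eta'}(\varphi)(u)$ and $T_{\eta_\varepsilon'}(A(u))\to T_{\eta'}(A(u))$ in $L^1_{loc}(\Pi)$; hence $\Lambda_{\eta_\varepsilon}\to\Lambda_\eta$ in $\D'(\Pi)$, and nonpositivity passes to the limit, so again $\Lambda_\eta=-\mu_\eta$ with $\mu_\eta$ as required.

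To prove \eqref{est1}, I would first note that $\Lambda_\eta$ is unchanged if $\eta$ is replaced by $\eta+\ell$ with $\ell$ affine: $T_{\ell'}$ is multiplication by the constant $\ell'$, and the extra contribution $\ell'\,(u_t+\div_x\varphi(u)-D^2\cdot A(u))$ vanishes, because an e.s.\ satisfies \eqref{1} in $\D'(\Pi)$ (apply \eqref{entr} with $\eta(v)=v$ and $\eta(v)=-v$, using $D^2\cdot A(u)=\div_x(a(u)\nabla_x u)$). So I may assume $\eta\ge0$ on $[-\|u_0\|_\infty,\|u_0\|_\infty]$. Descending \eqref{distr1} to $\R_+\times\T^n$ and testing it against $\theta(t)\in C_0^\infty(\R_+)$ (constant in $x$), the $\div_x$- and $D^2_x$-terms drop out, and with $h(t):=\int_{\T^n}\eta(u(t,x))\,dx$ and $\nu$ the $t$-marginal of $\mu$ one gets $\int_{\R_+}\theta'(t)h(t)\,dt=\int_{\R_+}\theta(t)\,d\nu(t)$, i.e.\ $h'=-\nu$ in $\D'(\R_+)$. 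Therefore $h$ coincides a.e.\ with a nonincreasing function $\bar h$ satisfying $\bar h(t_1)-\bar h(t_2)\ge\nu((t_1,t_2))=\mu((t_1,t_2)\times\T^n)$, and since $h=\bar h$ a.e.\ this is exactly \eqref{est1} for a.e.\ $0<t_1<t_2$. Finally, letting $t_1\downarrow0$ (so $u(t_1,\cdot)\to u_0$ in $L^1(\T^n)$ by (iv) and boundedness) and $t_2\uparrow+\infty$ along admissible times yields $\mu(\R_+\times\T^n)\le\int_{\T^n}\eta(u_0(x))\,dx<\infty$; the right-hand side of \eqref{est1} is insensitive to the affine normalization because $\int_{\T^n}u(t,x)\,dx\equiv IV$ is independent of $t$ (integrate \eqref{1} over $\T^n$). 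This gives the asserted finiteness of $\mu$ on $\R_+\times\T^n$.

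The step I expect to require the most care is this last passage: one must justify cleanly that $t\mapsto\int_{\T^n}\eta(u(t,x))\,dx$ admits a nonincreasing representative whose distributional derivative is minus the $t$-marginal of $\mu$, and that the endpoint limits $t_1\downarrow0$, $t_2\uparrow+\infty$ may be taken. The only other technical point is the mollification step for a general convex $\eta$, which hinges on the continuity of $g\mapsto T_g(f)$ for continuous $f$ under weak-$*$ convergence of $dg$ with locally uniformly bounded variation.
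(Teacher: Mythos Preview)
Your argument is correct and follows essentially the same strategy as the paper: the entropy inequality forces $\Lambda_\eta\le 0$, Schwartz's theorem produces the measure $\mu_\eta$, and testing against functions of $t$ alone kills the spatial derivative terms and yields $h'=-\nu$ in $\D'(\R_+)$, from which \eqref{est1} follows at Lebesgue points.

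The one technical difference worth noting is how the ``test against functions of $t$ alone'' step is executed. You descend the identity to $\R_+\times\T^n$ and then use the test function $\theta(t)$, which is legitimate but requires the (standard) fact that an $x$-periodic distributional identity on $\R_+\times\R^n$ restricts to $\R_+\times\T^n$. The paper instead stays on $\R_+\times\R^n$ and applies \eqref{distr1} to $\alpha(t)\beta(x/k)$, multiplies by $k^{-n}$, and sends $k\to\infty$; the spatial terms carry factors $k^{-1}$ or $k^{-2}$ and vanish, while the known limit $k^{-n}\langle\mu_\eta,\alpha(t)\beta(x/k)\rangle\to\int_{\R_+\times\T^n}\alpha(t)\,d\mu_\eta$ for periodic measures handles the right-hand side. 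This scaling device is precisely a rigorous implementation of your ``descending'' step. On the other hand, you are more explicit than the paper in two places: you handle a merely continuous convex $\eta$ by mollification (the paper silently assumes the entropy inequality extends), and you carry out the finiteness of $\mu$ on $\R_+\times\T^n$ via the affine normalization of $\eta$, which the paper postpones to Corollary~\ref{cor0}(iii).
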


\begin{proof}
It follows from (\ref{entr}) that for every convex $\eta(u)$ the distribution
$$
\eta(u)_t+\div_x(T_{\eta'}(\varphi)(u))-D^2\cdot T_{\eta'}(A(u))\le 0 \ \mbox{ in } \D'(\Pi).
$$
By the Schwartz theorem on the representation of nonnegative distributions we conclude that (\ref{distr1}) holds for some  locally finite nonnegative Borel measure $\mu_\eta$ on $\Pi$. Ii is clear that $\mu_\eta$ is periodic with respect to $x$ and can be treated as a measure on $\R_+\times\T^n$. Let $\alpha(t)\in C_0^1(\R_+)$, $\beta(y)\in C_0^2(\R^n)$, $\alpha(t),\beta(y)\ge 0$, $\displaystyle\int_{\R^n}\beta(y)dy=1$. Applying (\ref{distr1}) to the test function $\alpha(t)\beta(x/k)$, with $k\in\N$, we arrive at the relation
\begin{eqnarray*}
\<\mu_\eta,\alpha(t)\beta(x/k)\>\doteq\int_\Pi\alpha(t)\beta(x/k)d\mu_\eta(t,x) =\int_{\Pi}\eta(u(t,x))\alpha'(t)\beta(x/k)dtdx+ \\
\frac{1}{k}\int_{\Pi}T_{\eta'}(\varphi)(u)\cdot\nabla_y\beta(x/k)\alpha(t)dtdx+  \frac{1}{k^2}\int_{\Pi}T_{\eta'}(A)(u)\cdot D^2_y\beta(x/k)\alpha(t)dtdx.
\end{eqnarray*}
Multiplying this equality by $k^{-n}$ and passing to the limit as $k\to\infty$, we obtain
\begin{equation}\label{lm1}
\int_{\R_+\times\T^n}\alpha(t)d\mu_\eta(t,x)=\int_{\R_+\times\T^n}\eta(u(t,x))\alpha'(t)dtdx,
\end{equation}
where we use the known property
$$
\lim_{k\to\infty}k^{-n}\<\mu,\alpha(t)\beta(x/k)\>=\int_{\R_+\times\T^n}\alpha(t)d\mu(t,x)\int_{\R^n}\beta(y)dy
$$
for an arbitrary $x$-periodic locally finite measure on $\Pi$. Identity (\ref{lm1}) means that
$$
\frac{d}{dt}\int_{\T^n}\eta(u(t,x))dx=-\int_{\T^n}d\mu_\eta(t,x) \ \mbox{ in } \D'(\R^+),
$$
where $\displaystyle\int_{\T^n}d\mu_\eta(t,x)$ is treated as the measure $\mu_\eta^t$ on $\R_+$ defined by the relation
$$\<\mu_\eta^t,\alpha(t)\>=\int_{\R_+\times\T^n}\alpha(t)d\mu_\eta(t,x),
$$
that is, $\mu_\eta^t$ is the projection of $\mu_\eta$ on the $t$-axis. It follows from (\ref{lm1}) that for all $t_1,t_2\in\R_+$, $t_1<t_2$, being Lebesgue points of the function
$\displaystyle t\to\int_{\T^n}\eta(u(t,x))dx$ relation (\ref{est1}) holds.
\end{proof}

\begin{corollary}\label{cor0} Let $\eta(u)$ be a convex function. Then

(i) The function $I_\eta(t)=\displaystyle\int_{\T^n}\eta(u(t,x))dx$ decreases: $I_\eta(t_2)\le I_\eta(t_1)$ for a.e. $t_1,t_2\ge 0$, $t_2>t_1$;

(ii) for a.e. $t>0$
\begin{equation}\label{mass}
\frac{1}{V}\int_{\T^n}u(t,x)dx=I\doteq\frac{1}{V}\int_{\T^n}u_0(x)dx;
\end{equation}

(iii) the measure $\mu_\eta$ is finite on $\R_+\times\T^n$. Moreover,
$$
\mu_\eta(\R_+\times\T^n)=\int_{\T^n}\eta(u_0(x))dx-\esslim_{t\to\infty}\int_{\T^n}\eta(u(t,x))dx\le
2V\max_{|r|\le M} |\eta(r)|,
$$
where $M=\|u_0\|_\infty$.

\end{corollary}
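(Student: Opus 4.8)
The plan is to deduce all three items directly from Lemma~\ref{lemA1} and the identity obtained in its proof. For part (i) I would simply note that for a.e. $0<t_1<t_2$ estimate (\ref{est1}) gives $0\le\mu_\eta((t_1,t_2)\times\T^n)\le I_\eta(t_1)-I_\eta(t_2)$ since $\mu_\eta$ is nonnegative, whence $I_\eta(t_2)\le I_\eta(t_1)$. For part (ii) I would apply (i) to the two (weakly) convex $C^2$ functions $\eta(u)=u$ and $\eta(u)=-u$; this shows that $t\mapsto\int_{\T^n}u(t,x)\,dx$ is simultaneously non-increasing and non-decreasing, hence a.e. equal to a constant $c$. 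To identify $c=VI$ I would use the initial condition~(iv): since $u$ is $x$-periodic, the convergence $u(t,\cdot)\to u_0$ in $L^1_{loc}(\R^n)$ as $t\to0$ (in the essential sense) is the same as convergence in $L^1(\T^n)$, so $\int_{\T^n}u(t,x)\,dx\to\int_{\T^n}u_0(x)\,dx=VI$ along a full-measure set of times $t\to0^+$, forcing $c=VI$.

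For part (iii) I would first invoke (i) to replace $I_\eta$ by its monotone non-increasing representative, so that the one-sided limits $I_\eta(0^+)$ and $I_\eta(+\infty)$ exist, are finite, and satisfy $I_\eta(+\infty)\le I_\eta(t)\le I_\eta(0^+)$. Next I would use identity (\ref{lm1}) from the proof of Lemma~\ref{lemA1}: it states that the projection $\mu_\eta^t$ of $\mu_\eta$ onto the $t$-axis equals $-(I_\eta)'$ in $\D'(\R_+)$. For a bounded monotone non-increasing function this distributional derivative is a finite measure of total mass $I_\eta(0^+)-I_\eta(+\infty)$, and since the total mass of $\mu_\eta$ on $\R_+\times\T^n$ coincides with that of its projection,
$$
\mu_\eta(\R_+\times\T^n)=I_\eta(0^+)-I_\eta(+\infty).
$$
By construction $I_\eta(+\infty)=\esslim_{t\to+\infty}\int_{\T^n}\eta(u(t,x))\,dx$. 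For $I_\eta(0^+)$ I would argue as in part (ii): the maximum principle gives $|u(t,x)|\le M:=\|u_0\|_\infty$ a.e., and $\eta$ is uniformly continuous on $[-M,M]$, so the $L^1(\T^n)$-convergence $u(t,\cdot)\to u_0$ from (iv) upgrades to $\eta(u(t,\cdot))\to\eta(u_0)$ in $L^1(\T^n)$, whence $I_\eta(0^+)=\int_{\T^n}\eta(u_0(x))\,dx$. This gives the stated formula, and the bound $2V\max_{|r|\le M}|\eta(r)|$ follows from $|I_\eta(t)|\le V\max_{|r|\le M}|\eta(r)|$ for every admissible $t$; in particular $\mu_\eta$ is finite.

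The only delicate point I anticipate is the limit $t\to0^+$: one must combine the maximum principle, the uniform continuity of $\eta$ on the range of $u$, and the essential initial-trace convergence~(iv) to pass from $L^1$-convergence of $u$ to convergence of the numerical quantities $I_\eta(t)$, and then match this with the monotone right-limit $I_\eta(0^+)$. I would also emphasize that using identity (\ref{lm1}), and not merely inequality (\ref{est1}), is what yields the \emph{exact} value of the total mass, since (\ref{est1}) alone gives only $\mu_\eta(\R_+\times\T^n)\le I_\eta(0^+)-I_\eta(+\infty)$.
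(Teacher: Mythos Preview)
Your proposal is correct and follows essentially the same route as the paper: part~(i) from estimate~(\ref{est1}), part~(ii) from~(i) with $\eta(u)=\pm u$ plus the initial condition~(iv), and part~(iii) by letting $t_1\to 0$, $t_2\to\infty$. Your explicit use of identity~(\ref{lm1}) to secure the \emph{equality} $\mu_\eta(\R_+\times\T^n)=I_\eta(0^+)-I_\eta(+\infty)$ is a welcome clarification, since the paper's proof literally refers to the inequality~(\ref{est1}); likewise your justification of $I_\eta(0^+)=\int_{\T^n}\eta(u_0)\,dx$ via the maximum principle and uniform continuity of $\eta$ on $[-M,M]$ spells out what the paper compresses into ``taking again into account the initial condition.''
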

\begin{proof}
Assertion (i) readily follows from (\ref{est1}). Applying (i) to the entropies $\eta(u)=\pm u$, we obtain that for a.e. $t,\tau>0$
$$
\int_{\T^n}u(t,x)dx=\int_{\T^n}u(\tau,x)dx,
$$
and (\ref{mass}) follows in the limit as $\tau\to 0$ with the help of initial condition (iv) of Definition~\ref{def1}.

To prove (iii), we pass to the limits in (\ref{est1}) as $t_1\to 0$, $t_2\to\infty$, taking again into account the initial condition.
\end{proof}

\begin{lemma}\label{lemA2}
In the notations of Definition~\ref{def1} the distributions
$$\div_x B_r(u(t,x))\in L^2(\R_+\times\T^n) \quad \forall r=1,\ldots,n,$$ and for almost all $\tau>0$
\begin{eqnarray}\label{est2}
\int_{(\tau,+\infty)\times\T^n}\sum_{r=1}^n (\div_x B_r(u(t,x)))^2dtdx\le\nonumber\\ \frac{1}{2}\int_{\T^n}(u(\tau,x))^2dx\le \frac{1}{2}\int_{\T^n}(u_0(x))^2dx.
\end{eqnarray}
\end{lemma}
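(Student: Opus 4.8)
The plan is to test the entropy inequality \eqref{entr} against the quadratic entropy $\eta(u)=u^2/2$, for which $\eta''\equiv 1$ and the dissipation term becomes precisely $\sum_{r=1}^n(\div_x B_r(u))^2$. First I would apply Lemma~\ref{lemA1} to this $\eta$, obtaining the nonnegative $x$-periodic measure $\mu=\mu_{u^2/2}$ on $\Pi$ (equivalently a finite measure on $\R_+\times\T^n$, by Corollary~\ref{cor0}(iii)) that realizes the defect in \eqref{distr1}. Subtracting \eqref{distr1} written for this $\eta$ from the entropy inequality \eqref{entr} written for the same $\eta$, the first-order and diffusion terms cancel and one is left with
$$
\sum_{r=1}^n(\div_x B_r(u(t,x)))^2-\mu\le 0 \quad\text{in }\D'(\Pi).
$$
Since each $\div_x B_r(u)\in L^2_{loc}(\Pi)$ by condition (i) of Definition~\ref{def1}, the function $f(t,x)=\sum_{r=1}^n(\div_x B_r(u))^2$ is a nonnegative element of $L^1_{loc}(\Pi)$ and so defines a locally finite (periodic) measure $f\,dtdx$; the displayed inequality then says exactly that $f\,dtdx\le\mu$ as measures on $\R_+\times\T^n$, i.e. $\int_E f\,dtdx\le\mu(E)$ for every Borel $E\subset\R_+\times\T^n$. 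In particular $f\in L^1(\R_+\times\T^n)$ because $\mu$ is finite there, which is the first assertion of the lemma.

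Next I would extract the quantitative bound \eqref{est2} from Lemma~\ref{lemA1}. Inequality \eqref{est1} with $\eta=u^2/2$ gives, for a.e. $0<\tau<t_2$,
$$
\mu((\tau,t_2)\times\T^n)\le\frac12\int_{\T^n}u(\tau,x)^2\,dx-\frac12\int_{\T^n}u(t_2,x)^2\,dx\le\frac12\int_{\T^n}u(\tau,x)^2\,dx,
$$
and letting $t_2\to+\infty$ along admissible values (using continuity of the measure $\mu$ from below and $\int_{\T^n}u(t_2,x)^2dx\ge0$) yields $\mu((\tau,+\infty)\times\T^n)\le\frac12\int_{\T^n}u(\tau,x)^2dx$ for a.e. $\tau>0$. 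Applying the measure inequality $f\,dtdx\le\mu$ to the set $E=(\tau,+\infty)\times\T^n$ gives the first inequality in \eqref{est2}. The second inequality follows from Corollary~\ref{cor0}(i) applied to $\eta=u^2/2$: the function $\tau\mapsto\frac12\int_{\T^n}u(\tau,x)^2dx$ is nonincreasing for a.e. $\tau$, and its value tends to $\frac12\int_{\T^n}u_0(x)^2dx$ as $\tau\to0+$ by the initial condition (iv) together with the maximum principle $|u|\le\|u_0\|_\infty$, which upgrades the $L^1_{loc}$-convergence $u(\tau,\cdot)\to u_0$ to $L^2(\T^n)$-convergence via dominated convergence. (As an alternative to the argument of the previous paragraph, the global bound $\div_x B_r(u)\in L^2(\R_+\times\T^n)$ is also recovered by letting $\tau\to0+$ in \eqref{est2} and invoking monotone convergence, the right-hand side staying $\le\frac12\int_{\T^n}u_0^2dx$.)

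I expect the only genuinely delicate point to be the passage from the distributional inequality $f-\mu\le0$ in $\D'(\Pi)$ to the pointwise-on-Borel-sets domination $f\,dtdx\le\mu$. This is a standard consequence of the Schwartz representation of nonpositive distributions (the distribution $f-\mu$ equals $-\nu$ for a nonnegative locally finite measure $\nu$, whence $\mu=f\,dtdx+\nu\ge f\,dtdx$), but it is the one step where one must argue at the level of measures rather than manipulate distributions formally. Everything else is bookkeeping with the already-established Lemma~\ref{lemA1} and Corollary~\ref{cor0} and with the periodicity that allows all the measures and functions involved to be regarded on $\R_+\times\T^n$.
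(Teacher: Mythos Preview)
Your proof is correct and follows essentially the same strategy as the paper: test the entropy inequality \eqref{entr} with the quadratic entropy $\eta(u)=u^2/2$, so that $\eta''\equiv1$ and the dissipation term is exactly $\sum_r(\div_x B_r(u))^2$, then average over the periodic cell to extract the integral bound. The only difference is organizational: the paper repeats the $k^{-n}\alpha(t)\beta(x/k)$ averaging argument directly on the inequality $\sum_r(\div_x B_r(u))^2\le -\{(u^2/2)_t+\cdots\}$, whereas you route through the already-established Lemma~\ref{lemA1} and \eqref{est1}, comparing $f\,dtdx$ to the entropy-defect measure $\mu_{u^2/2}$ and then invoking Corollary~\ref{cor0}; this is a cleaner packaging of the same computation.
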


\begin{proof}
In view of relation (\ref{entr}) with $\eta(u)=u^2/2$
$$
\sum_{r=1}^n (\div_x B_r(u))^2\le - \{(u^2/2)_t+\div_x(T_{u}(\varphi)(u))-D^2\cdot T_{u}(A(u))\} \mbox{ in }
\D'(\Pi).
$$
Applying this relation to the nonnegative test function $k^{-n}\alpha(t)\beta(x/k)$, with $\alpha(t)\in C_0^1(\R_+)$, $\beta(y)\in C_0^2(\R^n)$, $k\in\N$, and passing to the limit as $k\to\infty$, we obtain, like in the proof of Lemma~\ref{lemA1}, that
$$
\int_{\R_+\times\T^n}\sum_{r=1}^n (\div_x B_r(u))^2\alpha(t)dtdx \le\frac{1}{2}\int_{\R_+\times\T^n} u^2\alpha'(t)dtdx.
$$
This relation implies that for almost all $\tau,T>0$, $\tau<T$
\begin{eqnarray*}
\int_{(\tau,T)\times\T^n}\sum_{r=1}^n (\div_x B_r(u))^2dtdx \le \\ \frac{1}{2}\int_{\T^n} (u(\tau,x))^2dx-\frac{1}{2}\int_{\T^n} (u(T,x))^2dx\le \frac{1}{2}\int_{\T^n} (u(\tau,x))^2dx,
\end{eqnarray*}
and (\ref{est2}) follows in the limit as $T\to+\infty$. We take also into account that for a.e. $t_0\in (0,\tau)$
$$
\int_{\T^n} (u(\tau,x))^2dx\le \int_{\T^n} (u(t_0,x))^2dx
$$
by Corollary~\ref{cor0}(i), which implies in the limit as $t_0\to 0$ that
$$
\int_{\T^n} (u(\tau,x))^2dx\le \int_{\T^n} (u_0(x))^2dx.
$$
\end{proof}

\begin{lemma}\label{lemA3}
Let $u_1=u_1(t,x)$, $u_2=u_2(t,x)$ be $x$-periodic e.s. of (\ref{1}), (\ref{2}) with initial functions $u_{10}(x), u_{20}(x)\in L^\infty(\T^n)$, respectively. Then for a.e. $t,\tau>0$ such that $t>\tau$
\begin{equation}\label{contr}
\int_{\T^n}|u_1(t,x)-u_2(t,x)|dx\le \int_{\T^n}|u_1(\tau,x)-u_2(\tau,x)|dx\le \int_{\T^n}|u_{10}(x)-u_{20}(x)|dx.
\end{equation}
\end{lemma}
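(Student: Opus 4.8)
The plan is to establish the classical Kruzhkov-type $L^1$-contraction estimate, adapted to the hyperbolic-parabolic setting via the doubling-of-variables technique of Carrillo, Chen–Perthame, and the author. First I would record the device that makes the entropy inequality usable for comparison: instead of working with $C^2$ convex entropies $\eta$, one passes (by approximation) to the Kruzhkov-type entropies $\eta(u)=|u-k|$ and the associated entropy fluxes $T_{\eta'}(\varphi)(u)=\sgn(u-k)(\varphi(u)-\varphi(k))$ and $T_{\eta'}(A_{ij}(u))=\sgn(u-k)(A_{ij}(u)-A_{ij}(k))$. One should also note, crucially, that in the parabolic region the naive entropy inequality must be supplemented by the ``chain rule'' (ii) of Definition~\ref{def1} (extended to Borel $g$ via Remark~\ref{rem0}), which is exactly what lets one control the cross term $\div_x B_r(u_1)\cdot\div_x B_r(u_2)$ arising in the doubling argument; the integrability $\div_x B_r(u_i)\in L^2(\R_+\times\T^n)$ from Lemma~\ref{lemA2} guarantees all these terms are well defined.

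Next I would double the variables: write the entropy inequality for $u_1=u_1(t,x)$ with constant $k=u_2(s,y)$, and symmetrically for $u_2=u_2(s,y)$ with $k=u_1(t,x)$, test against a nonnegative $\psi(t,x,s,y)$ that is $L$-periodic in $x$ and in $y$, add the two inequalities, and integrate. The flux term produces $\sgn(u_1-u_2)(\varphi(u_1)-\varphi(u_2))\cdot(\nabla_x+\nabla_y)\psi$, which vanishes after one collapses $\psi$ onto the diagonal in the spatial variables. The parabolic terms, after using the chain rule (ii) to rewrite $D^2_x T_{\sgn(\cdot-k)}(A(u_1))$ in terms of $\div_x B_r(u_1)$, combine into $\sum_r(\div_x B_r(u_1)-\div_x B_r(u_2))^2$ up to terms that again vanish on the spatial diagonal — this is the heart of Carrillo's argument — and since that square is nonnegative it only helps the inequality. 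One is then left, after choosing $\psi(t,x,s,y)=\delta_\varepsilon(t-s)\,\rho_h(x-y)\,\theta(t)$ with $\rho_h$ a periodic spatial mollifier on $\T^n$ and $\delta_\varepsilon$ a temporal mollifier, and passing to the limits $h\to0$, $\varepsilon\to0$, with
$$
\int_{\T^n}|u_1(t,x)-u_2(t,x)|\,dx
$$
as the resulting function of $t$, with the differential inequality
$$
\frac{d}{dt}\int_{\T^n}|u_1(t,x)-u_2(t,x)|\,dx\le 0 \quad\text{in }\D'(\R_+).
$$
Integrating between Lebesgue points $\tau<t$ gives the first inequality in (\ref{contr}); letting $\tau\to0$ and invoking the initial condition (iv) of Definition~\ref{def1} gives the second.

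The main obstacle is the rigorous treatment of the parabolic cross terms in the doubling argument: one must justify that
$$
\iint T_{\sgn(\cdot-u_2(s,y))}(A(u_1))(t,x)\cdot D^2_x\psi\,dt\,dx\,ds\,dy
$$
can be integrated by parts and recombined, using (ii), into the ``good'' square term plus remainders that vanish as the mollification parameters go to zero, and that no uncontrolled term survives the degeneracy of $a$ on its kernel. This requires the $L^2$-regularity of $\div_x B_r(u_i)$ from Lemma~\ref{lemA2}, the continuity of $T_g(B_r)$ and $T_g(A_{ij})$ in $g$, and a careful limit passage; it is precisely the point where the Chen–Perthame/Carrillo machinery is invoked. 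Since this is the standard well-posedness estimate for entropy solutions in the sense of \cite{ChPer1}, I would either reproduce the doubling computation in full or, more economically, cite \cite{ChPer1,ChKarl} for the non-periodic analogue and explain that periodicity changes nothing: the mollifiers are simply taken periodic, and all boundary terms at spatial infinity are absent because integration is over the compact torus $\T^n$.
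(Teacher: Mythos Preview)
Your approach is correct and matches the paper's: both invoke the Kato inequality
\[
(|u_1-u_2|)_t+\div_x [\sgn(u_1-u_2)(\varphi(u_1)-\varphi(u_2))]-D^2\cdot [\sgn(u_1-u_2)(A(u_1)-A(u_2))]\le 0
\]
in $\D'(\Pi)$ from \cite{ChPer1,BenKarl,Car} (via doubling of variables), deduce $\frac{d}{dt}\int_{\T^n}|u_1-u_2|\,dx\le 0$, and then integrate between Lebesgue points and let the earlier time tend to $0$ using the initial condition. The only cosmetic difference is that the paper, rather than redoing the doubling on the torus with periodic mollifiers, simply tests the Kato inequality on $\R^n$ against $k^{-n}\alpha(t)\beta(x/k)$ and sends $k\to\infty$ (exactly as in Lemmas~\ref{lemA1}--\ref{lemA2}) to eliminate the spatial divergence and second-order terms.
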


\begin{proof}
As was established in \cite{ChPer1,BenKarl} (for the isotropic case see earlier paper \cite{Car}) by application of a variant of Kruzhkov's doubling variables method,
\begin{eqnarray*}
(|u_1-u_2|)_t+\div_x [\sgn(u_1-u_2)(\varphi(u_1)-\varphi(u_2))]- \\ D^2\cdot [\sgn(u_1-u_2)(A(u_1)-A(u_2))]\le 0 \ \mbox{ in }
\D'(\Pi).
\end{eqnarray*}
Applying this relation to a nonnegative test function $k^{-n}\alpha(t)\beta(x/k)$ like in the proofs of Lemmas~\ref{lemA1}-\ref{lemA2},  and passing to the limit as $k\to\infty$, we arrive at the relation
$$
\int_{\R_+\times\T^n}|u_1(t,x)-u_2(t,x)|\alpha'(t)dtdx\ge 0 \quad \forall \alpha(t)\in C_0^1(\R_+), \alpha(t)\ge 0.
$$
This means that
$$
\frac{d}{dt}\int_{\T^n}|u_1(t,x)-u_2(t,x)|dx\le 0 \ \mbox{ in } \D'(\R_+).
$$
Therefore, for a.e. $t,\tau,\delta\in\R_+$ such that $t>\tau>\delta$
$$
\int_{\T^n}|u_1(t,x)-u_2(t,x)|dx\le \int_{\T^n}|u_1(\tau,x)-u_2(\tau,x)|dx\le \int_{\T^n}|u_1(\delta,x)-u_2(\delta,x)|dx,
$$
and (\ref{contr}) follows in the limit as $\delta\to 0$.
\end{proof}

\begin{corollary}\label{cor1}
Let $u(t,x)$ be a periodic e.s. of (\ref{1}), (\ref{2}). Then the family $u(t,\cdot)$, $t>0$, is pre-compact in $L^2(\T^n)$ (after possible correction on a set of null measure of values $t$).
\end{corollary}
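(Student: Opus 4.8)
The plan is to combine two facts: a uniform continuity in $t$ of the map $t\mapsto u(t,\cdot)$, which settles bounded time intervals, and a compactness statement for the time-translates $u(t_k+\cdot,\cdot)$, which settles $t\to+\infty$.

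First I would show that $u$ admits a representative that is uniformly continuous from $[0,+\infty)$ into $L^1(\T^n)$, with $u(0,\cdot)=u_0$. Fix $h>0$; the equation being autonomous, $\tilde u(t,x):=u(t+h,x)$ is again an $x$-periodic e.s.\ (with initial data $u(h,\cdot)$, for a.e.\ $h$), so Lemma~\ref{lemA3} applied to $u_1=u$, $u_2=\tilde u$ gives
\[
\|u(t,\cdot)-u(t+h,\cdot)\|_{L^1(\T^n)}\le\|u_0-u(h,\cdot)\|_{L^1(\T^n)}\quad\text{for a.e.\ }t>0.
\]
Since $\|u_0-u(h,\cdot)\|_{L^1(\T^n)}\to 0$ as $h\to 0^+$ by the initial condition (iv) of Definition~\ref{def1}, the non-decreasing function $\omega(h):=\esssup_{0<h'\le h}\|u_0-u(h',\cdot)\|_{L^1(\T^n)}$ satisfies $\omega(0^+)=0$ and $\|u(t,\cdot)-u(s,\cdot)\|_{L^1(\T^n)}\le\omega(|t-s|)$ for a.e.\ $t,s\ge 0$, which yields the claimed representative. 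Together with the maximum principle $\|u(t,\cdot)\|_\infty\le M:=\|u_0\|_\infty$ and the bound $\|f\|_{L^2(\T^n)}^2\le M\|f\|_{L^1(\T^n)}$ (valid for $\|f\|_\infty\le M$), the same modulus $\omega$ controls $\|u(t,\cdot)-u(s,\cdot)\|_{L^2(\T^n)}$; in particular $\{u(t,\cdot):0\le t\le T\}$ is compact in $L^2(\T^n)$ for each $T>0$, and it remains to find, for every sequence $t_k\to+\infty$, a subsequence along which $u(t_k,\cdot)$ converges in $L^2(\T^n)$.

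Next, given $t_k\to+\infty$, I would set $v_k(t,x):=u(t+t_k,x)$: these are $x$-periodic e.s.\ of (\ref{1}) with $\|v_k\|_{L^\infty(\Pi)}\le M$ that share the time-modulus $\omega$ from the first step. By Lemma~\ref{lemA2}, as $k\to\infty$,
\[
\int_{\Pi}\sum_{r=1}^n(\div_x B_r(v_k))^2\,dt\,dx=\int_{(t_k,+\infty)\times\T^n}\sum_{r=1}^n(\div_x B_r(u))^2\,dt\,dx\to 0,
\]
while Corollary~\ref{cor0}(iii) gives $\mu^{v_k}_\eta(\R_+\times\T^n)=\mu_\eta((t_k,+\infty)\times\T^n)\to 0$ for every convex $\eta$, where $\mu^{v_k}_\eta$ denotes the dissipation measure of $v_k$ furnished by Lemma~\ref{lemA1}. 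The decisive — and, I expect, hardest — point is that such a sequence is strongly precompact in $L^1_{loc}(\Pi)$: passing to a subsequence, the Young measure $\nu_{t,x}$ generated by $\{v_k\}$ is a measure-valued solution of (\ref{1}) with vanishing entropy dissipation which, because $\div_x T_g(B_r)(v_k)=g(v_k)\div_x B_r(v_k)\to 0$ in $\D'(\Pi)$ for every bounded Borel $g$ (Remark~\ref{rem0}), also satisfies $\div_x\<\nu_{t,x},T_g(B_r)(\lambda)\>=0$; a reduction theorem for such measure-valued solutions — obtained via the kinetic formulation and velocity averaging, or via H-measures, in the spirit of \cite{ChPer1,ChPer2} — would then force $\nu_{t,x}$ to be a Dirac mass, i.e.\ $v_k\to v$ in $L^1_{loc}(\Pi)$ for some e.s.\ $v$.

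Finally, the common modulus $\omega$ makes the family $\{t\mapsto v_k(t,\cdot)\}$ equicontinuous into $L^1(\T^n)$, so the convergence $v_k\to v$ is uniform on each $[0,T]$; hence $v$ has an $L^1$-continuous-in-$t$ representative and $u(t_k,\cdot)=v_k(0,\cdot)\to v(0,\cdot)$ in $L^1(\T^n)$, thus in $L^2(\T^n)$ by the $L^\infty$ bound. This provides the required convergent subsequence, so $\{u(t,\cdot):t\ge T\}$ is precompact for every $T$; together with the compactness of $\{u(t,\cdot):0\le t\le T\}$ this shows that the trajectory $\{u(t,\cdot):t>0\}$ is sequentially precompact — hence precompact — in $L^2(\T^n)$, the excluded null set of values of $t$ being the one on which $u$ differs from the continuous-in-$t$ representative.
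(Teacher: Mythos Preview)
Your proposal has a genuine gap at the ``reduction theorem'' step. Corollary~\ref{cor1} is stated \emph{without} any nonlinearity-diffusivity hypothesis, so you cannot appeal to a rigidity result forcing the limit Young measure $\nu_{t,x}$ to be a Dirac mass. In the purely linear case $u_t+c\cdot\nabla_x u=0$, $a\equiv 0$, a measure-valued solution with zero entropy dissipation and zero diffusive current need not be Dirac --- any $\nu_{t,x}$ whose moments are transported by the linear flow qualifies --- so the particular subsequence you extract for the Young measure may well fail to converge strongly. The kinetic/H-measure reduction theorems you invoke in the spirit of \cite{ChPer1,ChPer2} all require a nondegeneracy condition on $(\varphi,a)$; without it this step simply does not go through. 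There is also a structural awkwardness: in the paper Corollary~\ref{cor1} is used in Section~\ref{secmain} as an input to the H-measure analysis that eventually proves such rigidity under the nonlinearity-diffusivity assumption, so importing that rigidity here would invert the logical order.

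The paper bypasses all of this with a direct application of the Kolmogorov--Riesz criterion. Rather than time translates, it uses \emph{spatial} translates: since the equation is translation invariant in $x$, the function $u(t,x+\Delta x)$ is again a periodic e.s.\ with initial data $u_0(\cdot+\Delta x)$, and Lemma~\ref{lemA3} gives
\[
\int_{\T^n}|u(t,x+\Delta x)-u(t,x)|\,dx\le\int_{\T^n}|u_0(x+\Delta x)-u_0(x)|\,dx
\]
for all $t$ in a set of full measure. Together with the uniform bound $|u|\le M=\|u_0\|_\infty$ this yields uniform $L^2$-equicontinuity in $x$ of the whole family $\{u(t,\cdot)\}_{t>0}$, and Kolmogorov--Riesz delivers precompactness in one stroke --- no Young measures, no strong-compactness theory for entropy solutions, and no splitting into bounded and unbounded times.
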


\begin{proof}
Let
$$
E=\{ \ t>0 \ | \ (t,x) \ \mbox{ is a Lebesgue point of } u \mbox{ for a.e. } x\in\R^n \ \}.
$$
It is known (see, for example, \cite[Lemma~1.2]{PaJHDE}) that $E$ is a set of full measure and $t\in E$ is a common Lebesgue point of the functions $t\to\int u(t,x)g(x)dx$ for all $g(x)\in L^1(\R^n)$. It is clear that for $t\in E$ and  $\Delta x\in\R^n$ $(t,x)$ is a Lebesgue point of $|u(t,x+\Delta x)-u(t,x)|$ for a.e. $x\in\R^n$. By \cite[Lemma~1.2]{PaJHDE} again, $t\in E$ is a Lebesgue point of the functions $\displaystyle t\to\int_{\T^n}|u(t,x+\Delta x)-u(t,x)|dx$ for all $\Delta x\in\R^n$.

Applying Lemma~\ref{lemA3} to e.s. $u(t,x)$, $u(t,x+\Delta x)$, we find that for each $t\in E$
$$
\int_{\T^n}|u(t,x+\Delta x)-u(t,x)|dx\le \int_{\T^n}|u_0(x+\Delta x)-u_0(x)|dx.
$$
and $|u(t,x)|\le M=\|u_0\|_\infty$. Therefore, for all $t\in E$ and all $\Delta x\in\R^n$
\begin{eqnarray*}
\int_{\T^n}(u(t,x+\Delta x)-u(t,x))^2dx\le 2M\int_{\T^n}|u(t,x+\Delta x)-u(t,x)|dx\le \\ 2M\int_{\T^n}|u_0(x+\Delta x)-u_0(x)|dx.
\end{eqnarray*}
This means that the family $u(t,\cdot)$, $t\in E$, is uniformly bounded and equicontinuous in $L^2(\T^n)$.
By the Kolmogorov-Riesz criterion this implies the pre-compactness of the family $u(t,\cdot)$ in $L^2(\T^n)$.
\end{proof}

\section{Reduction of the problem}\label{sec3}
We will need the following algebraic statement.

\begin{lemma}\label{lema}
Let $A$ be a lattice in $X=\R^n$, $A_0$ be a subgroup of $A$, $X_0$ be a linear span of $A_0$. Assume that $A_0=A\cap X_0$. Then any basis of $A_0$ can be completed to a basis of $A$.
\end{lemma}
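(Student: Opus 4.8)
The plan is to exploit the hypothesis $A_0=A\cap X_0$ to show that the quotient group $A/A_0$ is torsion-free, and then to invoke the fact that a short exact sequence of abelian groups with free quotient splits. Throughout, "basis" means $\Z$-basis.

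First I would record that, being a lattice in $\R^n$, the group $A$ is free abelian of finite rank; hence its subgroup $A_0$ and its quotient $A/A_0$ are finitely generated. Next I would check that $A/A_0$ has no torsion. Indeed, suppose $a\in A$ and $ma\in A_0$ for some positive integer $m$. Then $ma\in X_0$, and since $X_0$ is a linear subspace of $\R^n$ it follows that $a=\frac1m(ma)\in X_0$, so $a\in A\cap X_0=A_0$. Thus multiplication by $m$ is injective on $A/A_0$ for every $m\ge 1$, i.e. $A/A_0$ is torsion-free.

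A finitely generated torsion-free abelian group is free, so $A/A_0\cong\Z^d$ for some $d\ge 0$; in particular $A/A_0$ is a projective $\Z$-module, and the canonical exact sequence $0\to A_0\to A\to A/A_0\to 0$ (with $\pi\colon A\to A/A_0$ the projection) splits: there is a homomorphism $s\colon A/A_0\to A$ with $\pi\circ s=\operatorname{id}$. Choose a basis $\bar f_1,\dots,\bar f_d$ of $A/A_0$ and put $f_j=s(\bar f_j)\in A$. Given any basis $e_1,\dots,e_k$ of $A_0$, I then claim that $e_1,\dots,e_k,f_1,\dots,f_d$ is a basis of $A$. It generates $A$ over $\Z$: for $a\in A$ write $\pi(a)=\sum_j c_j\bar f_j$ with $c_j\in\Z$; then $a-\sum_j c_j f_j\in\ker\pi=A_0$ equals $\sum_i b_i e_i$ with $b_i\in\Z$. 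It is $\Z$-independent: a relation $\sum_i b_i e_i+\sum_j c_j f_j=0$ maps under $\pi$ to $\sum_j c_j\bar f_j=0$, forcing all $c_j=0$, whence $\sum_i b_i e_i=0$ and all $b_i=0$ since the $e_i$ form a basis of $A_0$.

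The only real content is the torsion-freeness of $A/A_0$, which is precisely where the assumption $A_0=A\cap X_0$ is used; the remainder is the standard structure theory of finitely generated abelian groups. (Alternatively one could induct on $\operatorname{rank}A-\operatorname{rank}A_0$, at each step adjoining a primitive vector of $A$ lying outside the current span, but the splitting argument is shorter and cleaner.)
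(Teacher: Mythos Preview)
Your proof is correct and complete. The paper, however, argues geometrically rather than algebraically: it considers the projection $\pr\colon X\to X/X_0$ and shows by a direct metric estimate that the image $B=\pr(A)$ is discrete in the normed space $X/X_0$ (any ball of radius $R$ in $X/X_0$ meets $B$ in the image of the finite set $A\cap\{\,|z|\le R+C\,\}$), hence $B$ is a lattice and admits a $\Z$-basis, whose lifts, together with the given basis of $A_0$, form a basis of $A$. The hypothesis $A_0=A\cap X_0$ enters in the paper only at the final step, to ensure that after subtracting the $B$-part one lands in $A_0$; in your argument it enters instead as the torsion-freeness of $A/A_0$. Your route is shorter and invokes the structure theorem for finitely generated abelian groups as a black box; the paper's route is longer but entirely elementary and self-contained, requiring only that a discrete subgroup of a finite-dimensional normed space is free.
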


\begin{proof}
Let $\xi_1,\ldots,\xi_k$ be a basis of $A_0$, that is, any element $\xi\in A_0$ is uniquely represented as
$\displaystyle\xi=\sum_{i=1}^k n_i\xi_i$ with integer coefficients $n_i$. It is clear that $A_0$ is a lattice and therefore $\xi_1,\ldots,\xi_k$ is a basis of the vector space $X_0$. We consider the natural projection ${\pr:X\to X/X_0}$.
Then $B=\pr(A)$ is an additive subgroup of $X/X_0$. We will show that $B$ is a lattice, i.e., a discrete subgroup of $X/X_0$.
It is sufficient to show that any ball $B_R=\{ \ x\in X/X_0 \ | \ p(x)\le R \ \}$ contains only finite set of points of $B$. Here
$$
p(x)=\min \{ \ |\xi-y| \ | \ y\in X_0 \ \}, \quad x=\pr(\xi),
$$
is the factor-norm. Here, and in the sequel, $|z|$ denotes the Euclidean norm of a finite-dimensional vector $z$. We assume that $x=\pr(\xi)\in B$, where $\xi\in A$, and that $p(x)\le R$.
We can choose $y\in X_0$ such that $|\xi-y|=p(x)\le R$. Recall that $\xi_i$, $i=1,\ldots,k$ is a basis of $X_0$. Therefore, we can represent $\displaystyle y=\sum_{i=1}^k s_i\xi_i$, $s_i\in\R$. Let $n_i=[s_i]\in \Z$ be integer parts of $s_i$,
so that $\alpha_i=s_i-n_i\in [0,1)$. Then
\begin{eqnarray*}
\left|\xi-\sum_{i=1}^k n_i\xi_i\right|=\left|\xi-y+\sum_{i=1}^k \alpha_i\xi_i\right|\le \\
|\xi-y|+\sum_{i=1}^k |\alpha_i||\xi_i|\le R+C, \quad C=\sum_{i=1}^k |\xi_i|=\const.
\end{eqnarray*}
Thus,
$$
\xi-\sum_{i=1}^k n_i\xi_i\in A_{R+C}=\{ \ \zeta\in A \ | \ |\zeta|\le R+C \ \}.
$$
Since $A$ is a lattice, the set $A_{R+C}$ is finite. Taking into account that $\displaystyle \sum_{i=1}^k n_i\xi_i\in A_0\subset X_0$, we find that
$$
x=\pr(\xi)=\pr\left(\xi-\sum_{i=1}^k n_i\xi_i\right)\in\pr(A_{R+C}).
$$
Hence, $B\cap B_R\subset\pr(A_{R+C})$ is a finite set for each $R>0$. We conclude that $B$ is a lattice. Therefore, there exists a basis $x_i$, $i=k+1,\ldots,m$, of the free abelian group $B$. We can find such $\xi_i\in A$ that $x_i=\pr(\xi_i)$.  Then for every $\xi\in A$ there exist unique
$n_i\in\Z$, $i=k+1,\ldots,m$, such that $\displaystyle\eta=\xi-\sum_{i=k+1}^m n_i\xi_i\in X_0$. We also observe that
$\eta\in A$. Thus, $\eta\in A\cap X_0=A_0$. Since $\xi_i$, $i=1,\ldots,k$ is a basis of $A_0$, then there exist unique $n_i\in\Z$, $i=1,\ldots,k$, such that
$\displaystyle
\eta=\sum_{i=1}^k n_i\xi_i.$ We conclude that
$$
\xi=\sum_{i=1}^m n_i\xi_i
$$
and this representation is unique. This means that $\xi_i$, $i=1,\ldots, m$ is a basis of $A$. The proof is complete.
\end{proof}

\begin{remark}\label{rem1}
The requirement $A\cap X_0=A_0$ is exact: if a basis of $A_0$ can be completed to a basis of $A$ then $A\cap X_0=A_0$. In fact, let $\xi_1,\ldots,\xi_k,\xi_{k+1},\ldots,\xi_m$ be a basis of $A$ completing a basis $\xi_1,\ldots,\xi_k$ of $A_0$ and $\xi\in A\cap X_0$. Since $\xi_1,\ldots,\xi_k$ is also a basis of the linear space $X_0$,  we have two representation
$$
\xi=\sum_{i=1}^m n_i\xi_i=\sum_{i=1}^k s_i\xi_i, \quad n_i\in\Z, \ s_i\in\R.
$$
Taking into account that the vectors $\xi_1,\ldots,\xi_m$ are linearly independent, we conclude that these representations must coincide. In particular, $n_i=0$ for $i>k$ and $\displaystyle\xi=\sum_{i=1}^k n_i\xi_i\in A_0$. Hence, $A\cap X_0\subset A_0$. Since the inverse inclusion $A_0\subset A\cap X_0$ is evident, we conclude that $A\cap X_0=A_0$.
\end{remark}

\medskip
We define
$$
L'_0=\{ \ \xi\in L' \ | \ \mbox{ the function } \xi\cdot\varphi(u) \ \mbox{ is affine in some vicinity of } I \ \}.
$$
It is clear that for any vector $\xi\in X_0$, where $X_0$ is the linear span of $L'_0$, the function $\xi\cdot\varphi(u)$ is affine in some vicinity of $I$, this means that $\xi\cdot\varphi(u)=a(\xi)u+\const$ in this vicinity. Since the map $\xi\to a(\xi)$ is linear, there exist a unique vector $\bar c\in X_0$ such that $a(\xi)=\bar c\cdot\xi$.

Obviously, $L'\cap X_0\subset L'_0$ and $L'_0$ is a subgroup of the lattice $L'$ satisfying the condition of Lemma~\ref{lema}.
Let $m$ be a rank of $L'_0$, $d=n-m$, $\zeta_i$, $i=d+1,\ldots,n$, be a basis of $L'_0$. By Lemma~\ref{lema} this basis can be completed to a basis $\zeta_i$,
$i=1,\ldots,n$, of the lattice $L'$. Then the vectors $\zeta_i$, $i=1,\ldots,n$ forms a basis of $\R^n$ as well.
We introduce the linear change $y=y(t,x)$, $y_i=\zeta_i\cdot x-c_it$, $i=1,\ldots,n$, where $c_i=\bar c\cdot\zeta_i=a(\zeta_i)$. After this change, we obtain entropy solution $v(t,y)$ of the problem
$$
v_t+\div_y(\tilde\varphi(v)-\tilde a(v)\nabla_y v)=0, \quad v(0,y)=v_0(y)=u_0(x(0,y)),
$$
where, in view of (\ref{after}), $\tilde\varphi_i(v)=\zeta_i\cdot\varphi(v)-c_iv$, $\tilde a_{ij}(v)=a(v)\zeta_i\cdot\zeta_j$. By the construction
the last $m$ flux component $\tilde\varphi_i(v)$, $i=d+1,\ldots,n$ are constant on some interval $(\alpha,\beta)\ni I$.
We notice that $v(t,y)$ is periodic with respect to $y$ with the standard lattice of periods $\Z^n$, this follows from the fact that $y(0,x)$ is an isomorphism from $L$ into $\Z^n$. We underline that the mean value $I$ remains the same under the described change. The nonlinearity-diffusivity condition of Theorem~\ref{thM} converts to the requirement: for all $\kappa\in\Z^n$, $\kappa\not=0$, there is no vicinity of $I$, where simultaneously the function $\kappa\cdot\tilde\varphi(u)$ is affine and the function $\tilde a(u)\kappa\cdot\kappa\equiv 0$. This directly follows from the relations
$$
\kappa\cdot\tilde\varphi(u)=\xi\cdot\varphi(u), \quad \tilde a(u)\kappa\cdot\kappa=a(u)\xi\cdot\xi,
$$
where $\displaystyle\xi=\sum_{i=1}^n\kappa_i\zeta_i\in L'$. The condition that the function $\xi\cdot\varphi(u)$ is affine in some vicinity of $I$ is equivalent to the inclusion $\xi\in L'_0 \Leftrightarrow \tilde\kappa=0$, where we denote
by $\tilde\kappa$, $\bar\kappa$ the orthogonal projections of $\kappa$ on the spaces
\begin{eqnarray*}\R^d=\{ \ \xi=(\xi_1,\ldots,\xi_n)\in\R^n \ | \ \xi_i=0, \ \forall i=d+1,\dots,n \ \}, \\ (\R^d)^\perp=\{\ \xi=(\xi_1,\ldots,\xi_n)\in\R^n \ | \ \xi_i=0, \ \forall i=1,\dots,d \ \},
\end{eqnarray*}
respectively. Therefore, the nonlinearity-diffusivity condition reduces to the requirement:
$\forall\kappa=\bar\kappa\in\Z^n$, $\kappa\not=0$, the function $\tilde a(u)\kappa\cdot\kappa\not\equiv 0$ in any vicinity of $I$.

Hence, going back to the original notations of equation (\ref{1}), we can suppose that
\begin{itemize}

\item[(R1)] $L=\Z^n$ (and in particular the volume $V=1$);

\item[(R2)] the function $\xi\cdot\varphi(u)$, $\xi\in L'=\Z^n$, is affine if and only if $\xi\in (\R^d)^\perp$;

\item[(R3)] the function $\xi\cdot\varphi(u)\equiv c(\xi)=\const$ on an interval $(\alpha,\beta)\ni I$ for all $\xi\in \Z^n\cap (\R^d)^\perp$;

\item[(R4)] the function $a(u)\xi\cdot\xi\not\equiv 0$ in any vicinity of $I$ for all $\xi\in \Z^n\cap (\R^d)^\perp$.
\end{itemize}

\section{Preliminaries}

\subsection{Measure valued functions and H-measures}
Recall (see \cite{Di,Ta1}) that a measure-valued function on a domain $\Omega\subset\R^N$ is a weakly
measurable map $x\mapsto \nu_x$ of $\Omega$ into the space $\operatorname{Prob}_0(\R)$ of probability
Borel measures with compact support in~$\R$.

The weak measurability of $\nu_x$ means that for each continuous function $g(\lambda)$ the
function $x\to\langle\nu_x,g(\lambda)\rangle\doteq\int g(\lambda)d\nu_x(\lambda)$ is measurable on~$\Omega$.

A  measure-valued function $\nu_x$  is said to be bounded if there
exists $M>0$ such  that $\supp\nu_x\subset[-M,M]$  for almost all
$x\in\Omega$.

Measure-valued functions of the kind
$\nu_x(\lambda)=\delta(\lambda-u(x))$, where $u(x)\in L^\infty(\Omega)$ and
$\delta(\lambda-u^*)$ is the Dirac measure at $u^*\in\R$, are called {\it regular}. We identify these
measure-valued functions and the corresponding functions $u(x)$, so that there is a natural
embedding of $L^\infty(\Omega)$ into the set $\MV(\Omega)$ of bounded measure-valued functions on~$\Omega$.

Measure-valued functions naturally arise as weak limits of bounded sequences in $L^\infty(\Omega)$ in the
sense of the following theorem by L.~Tartar \cite{Ta1}.

\begin{theorem}\label{thT}
Let $u_k(x)\in L^\infty(\Omega)$, $k\in\N$, be a bounded sequence. Then there exist a subsequence (we
keep the notation $u_k(x)$ for this subsequence) and a bounded measure valued function $\nu_x\in\MV(\Omega)$
such that
\begin{equation} \label{pr2} \forall g(\lambda)\in C(\R) \quad g(u_k)
\mathop{\to}_{k\to\infty}\langle\nu_x,g(\lambda)\rangle \quad\text{weakly-\/$*$ in } L^\infty(\Omega).
\end{equation}
Besides, $\nu_x$ is regular, i.e., $\nu_x(\lambda)=\delta(\lambda-u(x))$ if and only if $u_k(x)
\mathop{\to}\limits_{k\to\infty} u(x)$ in $L^1_{loc}(\Omega)$ (strongly).
\end{theorem}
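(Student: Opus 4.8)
The plan is to realize the limit measure-valued function $\nu_x$ via the Riesz representation theorem applied pointwise to weak-$*$ limits of the composed sequences $g(u_k)$, the only genuine work being to handle the uncountability of $C(\R)$ through separability and a diagonal extraction.

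First I would set $M=\sup_k\|u_k\|_\infty$, so that every $u_k$ takes values in $[-M,M]$ and only the restriction of a test function to this segment is relevant. Since $C([-M,M])$ is separable, I fix a countable dense family $\{g_j\}_{j\in\N}$ (say, polynomials with rational coefficients, including the constant $1$). Each $g_j(u_k)$ is bounded in $L^\infty(\Omega)=(L^1(\Omega))^*$, and because $L^1(\Omega)$ is separable a diagonal argument extracts a single subsequence (kept unrelabeled) along which $g_j(u_k)\rightharpoonup^*\ell_j$ in $L^\infty(\Omega)$ for every $j$. For a general $g\in C([-M,M])$ the estimate $\|g_j(u_k)-g(u_k)\|_\infty\le\|g_j-g\|_{C([-M,M])}$ shows that $g(u_k)$ converges weakly-$*$ as well, to a limit $\ell(g)$ depending linearly on $g$ with $\|\ell(g)\|_\infty\le\|g\|_{C([-M,M])}$; this furnishes (\ref{pr2}) once the $\ell(g)$ are identified as integrals against a measure.

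Next I would build the measures pointwise. Testing against nonnegative $L^1$ functions, each $g_j\ge0$ forces $\ell_j\ge0$ a.e.; likewise every rational linear relation among finitely many $g_j$ is inherited a.e.\ by the corresponding $\ell_j$, and $\ell(1)=1$ a.e. Deleting the countable union of these exceptional null sets leaves a set of full measure on which $g_j\mapsto\ell_j(x)$ is a nonnegative, rationally linear, normalized functional with $|\ell_j(x)|\le\|g_j\|_{C([-M,M])}$, which extends by density to a positive normalized linear functional $\Lambda_x$ of norm one on $C([-M,M])$. The Riesz representation theorem then supplies, for a.e.\ $x$, a unique probability measure $\nu_x$ with $\supp\nu_x\subset[-M,M]$ and $\<\nu_x,g\>=\Lambda_x(g)$. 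Weak measurability is automatic: for each $g$ the function $x\mapsto\<\nu_x,g\>=\ell(g)(x)$ is a weak-$*$ limit and hence lies in $L^\infty(\Omega)$, so that $\nu_x\in\MV(\Omega)$.

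For the regularity criterion, if $u_k\to u$ in $L^1_{loc}(\Omega)$ I would pass to an a.e.\ convergent subsequence, observe that $g(u_k)\to g(u)$ a.e.\ and boundedly for continuous $g$, and conclude by uniqueness of weak-$*$ limits that $\<\nu_x,g\>=g(u(x))$ for all $g$, i.e.\ $\nu_x=\delta(\lambda-u(x))$. Conversely, if $\nu_x$ is this Dirac mass, I apply (\ref{pr2}) with $g(\lambda)=\lambda$ and $g(\lambda)=\lambda^2$ to obtain $u_k\rightharpoonup^* u$ and $u_k^2\rightharpoonup^* u^2$ in $L^\infty(\Omega)$; expanding $(u_k-u)^2$ and testing its three terms against $\chi_K$ and $u\chi_K\in L^1(\Omega)$ for compact $K\subset\Omega$ shows $\int_K(u_k-u)^2\,dx\to0$, giving $L^2(K)$ and hence $L^1_{loc}(\Omega)$ convergence. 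I expect the main obstacle to be organizational rather than analytic: one must confine positivity, rational linearity, and normalization---each holding only a.e.---to a single common null set, so that the Riesz representation can be invoked simultaneously for almost every $x$ and the resulting map $x\mapsto\nu_x$ is genuinely weakly measurable.
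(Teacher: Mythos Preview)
Your argument is the standard proof of Tartar's theorem and is correct. Note, however, that the paper does not prove this statement at all: it is quoted as a known result attributed to Tartar \cite{Ta1} and is used as background for the notion of measure-valued limits. So there is no ``paper's own proof'' to compare against; your proposal simply supplies the omitted details, and it does so along the classical lines (separability of $C([-M,M])$, diagonal extraction, Riesz representation pointwise, and the $L^2$ trick $u_k\rightharpoonup u$, $u_k^2\rightharpoonup u^2\Rightarrow u_k\to u$ for the regularity criterion).
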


Another useful tool for evaluation of weak convergence is Tartar's H-measures and their variants. H-measures were introduced by L. Tartar \cite{Ta2} and independently by P.~G\'erard in \cite{Ger}.
We recall the notion of H-measure in the simple case of scalar sequences.

Let $S=\{ \ \xi\in\R^N \ | \ |\xi|=1 \ \}$ be the unit sphere in $\R^N$,
$\pi:\R^N\setminus\{0\}\to S$, $\pi(\xi)=\xi/|\xi|$, be the orthogonal projection on the sphere.
Let $$F(u)(\xi)=\int_{\R^N} e^{-2\pi i\xi\cdot x} u(x)dx, \quad \xi\in\R^N,$$
be the Fourier transformation extended as a unitary operator on the space $L^2(\R^N)$, $u\to\overline{u}$, $u\in\C$, be the complex conjugation.

Now, we assume that $u_k=u_k(x)$ is a bounded sequence in $L^\infty(\Omega)$ (more generally, in $L^2_{loc}(\Omega)$) weakly convergent to $0$,

\begin{proposition}[see \cite{Ta2,Ger}]\label{pro1} There exists a nonnegative Borel measure
$\mu$ in $\Omega\times  S$ and a subsequence $u_r(x)=u_k(x),$ $k=k_r,$ such that
\begin{equation}\label{Hm}
\<\mu,\Phi_1(x)\overline{\Phi_2(x)}\psi(\xi)\>=
\lim\limits_{r\to\infty}\int_{\R^n}
F(u_r\Phi_1)(\xi)\overline{F(u_r\Phi_2)(\xi)}
\psi(\pi(\xi))d\xi
\end{equation}
for all $\Phi_1(x),\Phi_2(x)\in C_0(\Omega)$ and $\psi(\xi)\in
C(S)$.
\end{proposition}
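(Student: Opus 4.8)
\medskip

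\noindent\emph{Proof plan.} The plan is to reproduce Tartar's original construction. For $\psi\in C(S)$ let $P_\psi$ denote the Fourier multiplier $P_\psi w=F^{-1}\bigl(\psi(\pi(\xi))F(w)\bigr)$, a bounded operator on $L^2(\R^N)$ ($N$ the ambient dimension, $\Omega\subset\R^N$) with $\|P_\psi\|\le\|\psi\|_{C(S)}$, $P_{\psi_1}P_{\psi_2}=P_{\psi_1\psi_2}$, $P_\psi^{*}=P_{\bar\psi}$. By Plancherel's identity the integral on the right of (\ref{Hm}) at index $k$ equals
$$
B_k(\Phi_1,\Phi_2,\psi)=\bigl(P_\psi(u_k\Phi_1),\,u_k\Phi_2\bigr)_{L^2(\R^N)},\qquad \Phi_1,\Phi_2\in C_0(\Omega),\ \psi\in C(S),
$$
where $u_k\Phi_i\in L^2(\R^N)$ since $\Phi_i$ has compact support, and $|B_k(\Phi_1,\Phi_2,\psi)|\le\|\psi\|_{C(S)}\|u_k\Phi_1\|_2\|u_k\Phi_2\|_2$, which is bounded uniformly in $k$ for fixed $\Phi_1,\Phi_2$. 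Choosing countable dense subsets of $C_0(\Omega)$ and of $C(S)$ and extracting a diagonal subsequence $u_r=u_{k_r}$, I obtain convergence of $B_r(\Phi_1,\Phi_2,\psi)$ along this subsequence for all arguments (first on the dense subsets, then everywhere by the uniform bound); write $B(\Phi_1,\Phi_2,\psi)$ for the limit. It is sesquilinear in $(\Phi_1,\Phi_2)$, linear in $\psi$, and for $\Phi_1=\Phi_2=\Phi$, $\psi\ge0$ it is nonnegative, being $\lim_{r}\int_{\R^N}|F(u_r\Phi)(\xi)|^2\psi(\pi(\xi))\,d\xi$.

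The crucial point will be \emph{locality}: $B(\Phi_1,\Phi_2,\psi)$ depends on $\Phi_1,\Phi_2$ only through the product $\Phi_1\overline{\Phi_2}$. This is where Tartar's first commutation lemma enters: for $b\in C_0(\R^N)$ and $\psi\in C(S)$ the commutator $[P_\psi,M_b]$, $M_b$ being multiplication by $b$, is compact on $L^2(\R^N)$. (I would check this first for smooth $\psi,b$, where $[P_\psi,M_b]$ is a pseudodifferential operator of order $-1$, hence maps $L^2$ into $H^1_{loc}$ and is compact; the general case follows because $\psi_n\to\psi$ in $C(S)$, $b_n\to b$ in $C_0$ give $[P_{\psi_n},M_{b_n}]\to[P_\psi,M_b]$ in operator norm, and norm limits of compact operators are compact.) Granting the lemma, splitting $P_\psi M_\Phi=M_\Phi P_\psi+[P_\psi,M_\Phi]$ and using that $u_r\Phi_1\rightharpoonup0$ in $L^2(\R^N)$ makes $[P_\psi,M_\Phi](u_r\Phi_1)\to0$ strongly (hence pairs to $0$ with the bounded sequence $u_r\Phi_2$), one gets $B(\Phi\Phi_1,\Phi_2,\psi)=B(\Phi_1,\overline{\Phi}\Phi_2,\psi)$ for all $\Phi\in C_0(\Omega)$, and in particular $B(\Phi_1,\Phi_2,\psi)=0$ whenever $\Phi_1\overline{\Phi_2}\equiv0$. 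Thus for fixed $\psi$ the form is determined by $\Phi_1\overline{\Phi_2}$ alone, so $L\bigl(\Phi_1(x)\overline{\Phi_2(x)}\psi(\xi)\bigr)\doteq B(\Phi_1,\Phi_2,\psi)$ extends by linearity to the subalgebra $\mathcal A\subset C_0(\Omega\times S)$ spanned by such products; writing any $p\in C_0(\Omega)$ as $p=\Phi_1\overline{\Phi_2}$ with $\|\Phi_1\|_\infty=\|\Phi_2\|_\infty=\|p\|_\infty^{1/2}$ shows $|L(\phi)|\le C_K\|\phi\|_\infty$ for $\phi\in\mathcal A$ supported in any fixed compact $K\subset\Omega\times S$. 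Since $\mathcal A$ is a conjugation-closed subalgebra of $C_0(\Omega\times S)$ separating points and vanishing nowhere, it is dense by Stone--Weierstrass, so $L$ extends to $C_c(\Omega\times S)$, and the Riesz--Markov theorem produces a Radon measure $\mu$ on $\Omega\times S$ with $\<\mu,\Phi_1(x)\overline{\Phi_2(x)}\psi(\xi)\>=B(\Phi_1,\Phi_2,\psi)$, which is (\ref{Hm}).

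It remains to check $\mu\ge0$. For real $\psi_j\in C(S)$, real $\Phi_j\in C_0(\Omega)$ and $g=\sum_j\Phi_j(x)\psi_j(\xi)$, expanding $g^2$ and using $P_{\psi_j}^{*}=P_{\psi_j}$ together with $P_{\psi_j}P_{\psi_k}=P_{\psi_k}P_{\psi_j}$,
$$
\<\mu,g^2\>=\lim_{r\to\infty}\sum_{j,k}\bigl(P_{\psi_j}(u_r\Phi_j),\,P_{\psi_k}(u_r\Phi_k)\bigr)_{L^2}=\lim_{r\to\infty}\Bigl\|\sum_j P_{\psi_j}(u_r\Phi_j)\Bigr\|_{L^2(\R^N)}^2\ge0.
$$
Given $\phi\in C_c(\Omega\times S)$, $\phi\ge0$, the function $\sqrt\phi$ is a real element of $C_c(\Omega\times S)$, hence a uniform limit of real $g\in\mathcal A$ with supports in a fixed compact set; then $g^2\to\phi$ uniformly with supports in a fixed compact, and the local boundedness of $L$ gives $\<\mu,\phi\>=\lim\<\mu,g^2\>\ge0$. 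Hence $\mu$ is a nonnegative Borel measure, and the proof is complete.

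\medskip
The only genuinely delicate ingredient is the first commutation lemma; the remaining steps — a diagonal extraction, Stone--Weierstrass, the Riesz--Markov theorem, and the elementary sum-of-squares identity above — are soft.
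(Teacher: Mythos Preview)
The paper does not give its own proof of this proposition: it is stated with the attribution ``see \cite{Ta2,Ger}'' and used as a black box, so there is nothing to compare against in the paper itself. Your sketch is a faithful reproduction of Tartar's original construction (diagonal extraction on a dense set, the first commutation lemma to obtain locality in $x$, Stone--Weierstrass and Riesz--Markov to get the measure, and the sum-of-squares identity for nonnegativity), and it is correct at the level of an outline; the only nontrivial input, as you note, is the compactness of $[P_\psi,M_b]$ on $L^2(\R^N)$.
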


\begin{remark}\label{rem2}
It follows from (\ref{Hm}) and the Plancherel identity that $\pr_\Omega\mu\le C\meas$, and that
(\ref{Hm}) remains valid for all $\Phi_1(x),\Phi_2(x)\in L^2(\Omega)$, cf. \cite[Remark 2(a)]{PaARMA}. Here
we denote by $\meas$ the Lebesgue measure on $\Omega$.
\end{remark}

The measure $\mu$ is called an H-measure corresponding to $u_r(x)$.
It is clear that $\mu=0$ if and only if the sequence $u_r\to 0$ as $r\to\infty$ in $L^2_{loc}(\Omega)$. In some sense, the H-measure $\mu=\mu(x,\xi)$ indicates the strength of oscillations of the sequence $u_r$ at the point $x$ and in the direction $\xi$.

In the sequel we will use the special sequences obtained from a given function by rescaling of independent variables.
Namely, let $u(t,x)\in L^\infty(\Pi)$ be a function periodic over the space variables $x$ (with the standard lattice of periods $\Z^n$), and $s_k\in\R_+$, $p_k,q_k\in\N$ be sequences converging to infinity as $k\to\infty$. We consider the sequence $u_k=u_k(t,x)=u(s_kt,p_k\tilde x+q_k\bar x)$, where $\tilde x,\bar x$ are the orthogonal projection of $x\in\R^n$ into the spaces $\R^d=\{\ x=(x_1,\ldots,x_n) \ | \ x_i=0, i=d+1,\ldots, n \ \}$ and $(\R^d)^\perp=\{\ x=(x_1,\ldots,x_n) \ | \ x_i=0, i=1,\ldots,d \ \}$, respectively. By the periodicity, the function $u$ and the functions $u_k$ can be treated as elements of $L^\infty(\R_+\times\T^n)$, where $\T^n=\R^n/\Z^n$ is the standard torus.

\begin{lemma}\label{lem1}
Suppose that the sequence $I_0(s_kt)=\int_{\T^n} u(s_kt,x)dx\rightharpoonup v(t)$ weakly-$*$ in $L^\infty(\R_+)$ as $k\to\infty$. Then
$u_k\rightharpoonup v(t)$ weakly-$*$ in $L^\infty(\Pi)$ as $k\to\infty$.
\end{lemma}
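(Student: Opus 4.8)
The plan is to test against product functions. Since $\|u_k\|_{L^\infty(\Pi)}\le M:=\|u\|_{L^\infty(\Pi)}$ for every $k$, and finite linear combinations of functions $\alpha(t)\beta(x)$ with $\alpha\in C_0(\R_+)$, $\beta\in C_0(\R^n)$ are dense in $L^1(\Pi)$, a routine $3\varepsilon$ argument reduces the claim to proving, for each such pair $\alpha,\beta$,
$$
\int_\Pi u_k(t,x)\alpha(t)\beta(x)\,dtdx\mathop{\to}_{k\to\infty}\Big(\int_{\R_+}v(t)\alpha(t)\,dt\Big)\Big(\int_{\R^n}\beta(x)\,dx\Big).
$$
Since $\alpha\in L^1(\R_+)$ and $I_0(s_k\cdot)\rightharpoonup v$ weakly-$*$ in $L^\infty(\R_+)$, we have $\int_{\R_+}I_0(s_kt)\alpha(t)\,dt\to\int_{\R_+}v(t)\alpha(t)\,dt$, so it suffices to show that
$$
R_k:=\int_\Pi u_k\,\alpha\beta\,dtdx-\Big(\int_{\R_+}I_0(s_kt)\alpha(t)\,dt\Big)\Big(\int_{\R^n}\beta\,dx\Big)\mathop{\to}_{k\to\infty}0.
$$

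The key point is a uniform-in-$\tau$ averaging estimate for the rescaled periodic function. Write $u_\tau(\cdot)=u(\tau,\cdot)$, a $\Z^n$-periodic element of $L^\infty(\R^n)$ with $\|u_\tau\|_\infty\le M$ and $\int_{\T^n}u_\tau=I_0(\tau)$. For an axis-parallel box $Q=\prod_{i=1}^n[a_i,a_i+h]$ and integers $p,q$ with $ph\ge1$, $qh\ge1$, the substitution $z=p\tilde x+q\bar x$ (which sends $Q$ to the box with side lengths $ph$ in the first $d$ coordinates and $qh$ in the last $n-d$) together with the elementary fact that the mean of a $\Z^n$-periodic $L^\infty$-function over any axis-parallel box with all side lengths $\ge1$ differs from its torus mean by at most $2M\sum_i(\mathrm{side}_i)^{-1}$ (decompose the box into a largest sub-box with integer side lengths plus a boundary layer of small relative volume) gives
$$
\Big|\frac{1}{|Q|}\int_Q u_\tau(p\tilde x+q\bar x)\,dx-I_0(\tau)\Big|\le\frac{2nM}{h\min(p,q)},
$$
a bound independent of $\tau$. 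Partitioning $\supp\beta$ into cubes of side $h$, replacing $\beta$ by its value at a sample point of each cube (an error of order $M\,\omega_\beta(h)$ per unit volume, where $\omega_\beta$ is the modulus of continuity of $\beta$) and recognizing the resulting Riemann sum for $\int\beta$, one obtains the uniform-in-$\tau$ estimate
$$
\Big|\int_{\R^n}u_\tau(p\tilde x+q\bar x)\beta(x)\,dx-I_0(\tau)\int_{\R^n}\beta\,dx\Big|\le C(\beta,n)\,M\Big(\omega_\beta(h)+\frac{1}{h\min(p,q)}\Big).
$$

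To finish, substitute $\tau=s_kt$, multiply by $\alpha(t)$ and integrate in $t$: this bounds $|R_k|$ by $\|\alpha\|_{L^1}\,C(\beta,n)\,M\big(\omega_\beta(h)+1/(h\min(p_k,q_k))\big)$ for all $k$ large enough that $\min(p_k,q_k)h\ge1$. Letting $k\to\infty$, so that $\min(p_k,q_k)\to\infty$, and then $h\to0$, using $\omega_\beta(h)\to0$, we conclude $R_k\to0$, which proves the lemma. The only genuine obstacle is securing a bound that is uniform in the rescaled time $\tau=s_kt$: for each fixed $\tau$ it is immediate that $u_\tau(p_k\tilde x+q_k\bar x)\rightharpoonup I_0(\tau)$, but with a rate depending on $\tau$, which is not enough to conclude anything about the joint convergence; the periodicity of $u$ in $x$ and the integrality of $p_k,q_k$ are exactly what render the box-averaging estimate uniform in $\tau$. (Alternatively, for $\beta\in C_0^\infty(\R^n)$ one may argue on the Fourier side, combining the rapid decay of $\widehat\beta$ with $\sum_{m\in\Z^n}|\widehat{u_\tau}(m)|^2\le M^2$ to obtain the same uniform estimate.)
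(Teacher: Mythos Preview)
Your proof is correct, but the paper proceeds differently. The paper works on the torus and tests against $a(t)e^{2\pi i r\cdot x}$ with $r\in\Z^n$: for $r\not=0$ a shift of the integration variable by $\tilde r/p_k$ (if $\tilde r\not=0$) or $\bar r/q_k$ (if $\bar r\not=0$) shows, using $p_k,q_k\in\N$, that the $r$-th Fourier coefficient $\int_{\T^n}e^{2\pi i r\cdot x}u_k(t,x)dx$ is \emph{exactly} zero once $\min(p_k,q_k)>|r|^2$; the $r=0$ term is the hypothesis, and density of trigonometric polynomials in $L^1(\R_+\times\T^n)$ finishes. Your route is the classical equidistribution/Riemann-sum argument on $\R^n$: a uniform-in-$\tau$ box-averaging estimate for periodic functions, followed by a modulus-of-continuity approximation of $\beta$. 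The paper's argument is shorter and yields an exact vanishing identity rather than an asymptotic rate; your argument is entirely Fourier-free and, incidentally, does not actually use the integrality of $p_k,q_k$ (only that they tend to infinity), so it proves slightly more than stated. Your parenthetical Fourier-side alternative, combining the decay of $\widehat\beta$ with the uniform $\ell^2$ bound on the Fourier coefficients of $u_\tau$, is closer in spirit to the paper's approach, though the paper bypasses any decay estimate on the test function by working directly with single harmonics on the torus.
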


\begin{proof}
For given $r\in\Z^n$, $r\not=0$, we choose $\bar k\in\N$ so large that $\min(p_k,q_k)>|r|^2$ for all $k\ge\bar k$. Observe that the projections $\tilde r, \bar r\in\Z^n$ and, by the periodicity of $u(t,\cdot)$,
\begin{eqnarray}\label{3}
\int_{\T^n}e^{2\pi ir\cdot x} u_k(t,x)dx=\int_{\T^n}e^{2\pi ir\cdot (x+\tilde r/p_k)} u(s_kt,p_k\tilde x+\tilde r+q_k\bar x)dx=\nonumber\\ \int_{\T^n}e^{2\pi ir\cdot (x+\tilde r/p_k)} u(s_kt,p_k\tilde x+q_k\bar x)dx=e^{2\pi i|\tilde r|^2/p_k}\int_{\T^n}e^{2\pi ir\cdot x} u_k(t,x)dx, \\
\label{4}
\int_{\T^n}e^{2\pi ir\cdot x} u_k(t,x)dx=\int_{\T^n}e^{2\pi ir\cdot (x+\bar r/q_k)} u(s_kt,p_k\tilde x+q_k\bar x+\bar r)dx=\nonumber\\ \int_{\T^n}e^{2\pi ir\cdot (x+\bar r/q_k)} u(s_kt,p_k\tilde x+q_k\bar x)dx=e^{2\pi i|\bar r|^2/q_k}\int_{\T^n}e^{2\pi ir\cdot x} u_k(t,x)dx.
\end{eqnarray}
Since either $\tilde r\not=0$ or $\bar r\not=0$ while $p_k>|r|^2\ge |\tilde r|^2$, $q_k>|r|^2\ge |\bar r|^2$, one of the factors $e^{2\pi i|\tilde r|^2/p_k}$, $e^{2\pi i|\bar r|^2/q_k}$ is different from $1$. Therefore, it follows from (\ref{3}), (\ref{4}) that for $k\ge\bar k$
$$
\int_{\T^n}e^{2\pi ir\cdot x} u_k(t,x)dx=0 \quad \forall t>0.
$$
This implies that for each $a(t)\in L^1(\R_+)$
\begin{equation}\label{5}
\int_{\R^+\times\T^n} a(t)e^{2\pi ir\cdot x} u_k(t,x)dtdx\mathop{\to}_{k\to\infty} 0=\int_{\R^+\times\T^n} v(t)a(t)e^{2\pi ir\cdot x}dtdx.
\end{equation}
If $r=0$ we have
$$
\int_{\T^n} u_k(t,x)dx=\int_{\T^n} u(s_kt,p_k\tilde x+q_k\bar x)dx=\int_{\T^n} u(s_kt,x)dx=I_0(s_kt)
$$
because the map $x\to p_k\tilde x+q_k\bar x$ keeps the Lebesgue measure on $\T^n$.
Since $I_0(s_kt)$ weakly-$*$ converges to the function $v(t)$, we conclude that
for each $a(t)\in L^1(\R_+)$
\begin{equation}\label{6}
\int_{\R^+\times\T^n} a(t) u_k(t,x)dtdx\mathop{\to}_{k\to\infty} \int_{\R_+} v(t)a(t)dt=\int_{\R^+\times\T^n} v(t)a(t)dtdx.
\end{equation}
From relations (\ref{5}), (\ref{6}) it follows that
\begin{equation}\label{7}
\int_{\R^+\times\T^n} f(t,x) u_k(t,x)dtdx\mathop{\to}_{k\to\infty} \int_{\R^+\times\T^n} v(t)f(t,x)dtdx
\end{equation}
for every function $f(t,x)=\sum_{r\in I} a_r(t)e^{2\pi ir\cdot x}$, where the parameter $r$ runs over a finite set $I\subset\Z^n$ while $a_r(t)\in L^1(\R_+)$. Since the space of such functions is dense in $L^1(\R_+\times\T^n)$, relation (\ref{7}) implies that
$u_k\rightharpoonup v(t)$ as $k\to\infty$ weakly-$*$ in $L^\infty(\R_+\times\T^n)$. By the periodicity, the above limit relation holds also  weakly-$*$ in the space $L^\infty(\Pi)$. The proof is complete.
\end{proof}

\begin{corollary}\label{cor2}
Assume in addition that for a dense set of functions $p(u)\in C(\R)$
\begin{equation}\label{8}
\int_{\T^n}p(u(s_kt,x))dx\mathop{\rightharpoonup}_{k\to\infty} c_p=\const \mbox{ weakly-$*$ in } L^\infty(\R_+).
\end{equation}
Then the sequence $u_k$ converges to a constant measure valued function ${\nu_{t,x}\equiv\nu}$ in the sense of relation (\ref{pr2}).
\end{corollary}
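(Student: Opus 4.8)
The plan is to reduce the statement to Lemma~\ref{lem1} applied to the composite functions $p\circ u$, and then to identify the common limit by the Riesz representation theorem.

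First I would note that for every $p(u)\in C(\R)$ the function $p(u(t,x))$ is again $x$-periodic with the standard lattice $\Z^n$ and lies in $L^\infty(\Pi)$, and that the sequence obtained from it by the same rescaling is exactly $p(u_k)(t,x)=p(u(s_kt,p_k\tilde x+q_k\bar x))$. Hence Lemma~\ref{lem1}, with $u$ replaced by $p\circ u$, shows that whenever $\int_{\T^n}p(u(s_kt,x))dx\rightharpoonup v_p(t)$ weakly-$*$ in $L^\infty(\R_+)$, one has $p(u_k)\rightharpoonup v_p(t)$ weakly-$*$ in $L^\infty(\Pi)$. By hypothesis (\ref{8}) the limit $v_p$ equals a constant $c_p$ for every $p$ in the given dense set $D$, so $p(u_k)\rightharpoonup c_p$ weakly-$*$ in $L^\infty(\Pi)$ for all $p\in D$.

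Next I would remove the restriction $p\in D$. Put $M=\|u\|_\infty$, so that $|u_k|\le M$ a.e. and only the values of $p$ on $[-M,M]$ are relevant; since $D$ is dense in $C(\R)$ for uniform convergence on compacts, its restrictions are dense in $C([-M,M])$. For $p\in C([-M,M])$ choose $p_j\in D$ with $\|p_j-p\|_{C([-M,M])}\to0$; then $\|p_j(u_k)-p(u_k)\|_{L^\infty(\Pi)}\le\|p_j-p\|_{C([-M,M])}$ uniformly in $k$, so the constants $c_{p_j}$ form a Cauchy sequence, their limit $c_p$ is independent of the approximating sequence, and a routine three-$\varepsilon$ estimate gives $p(u_k)\rightharpoonup c_p$ weakly-$*$ in $L^\infty(\Pi)$ for all $p\in C([-M,M])$. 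The same three-$\varepsilon$ device, applied to sums and scalar multiples of approximating sequences (which need not lie in $D$), together with uniqueness of weak-$*$ limits, shows that $p\mapsto c_p$ is linear; it is positive because $p(u_k)\ge0$ when $p\ge0$; it satisfies $c_1=1$ since $V=\meas(\T^n)=1$ and $1(u_k)\equiv1$; and $|c_p|\le\|p\|_{C([-M,M])}$.

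By the Riesz representation theorem there is then a unique probability Borel measure $\nu$ with $\supp\nu\subset[-M,M]$, hence $\nu\in\operatorname{Prob}_0(\R)$, such that $c_p=\langle\nu,p\rangle$ for all $p\in C(\R)$. Taking the constant measure-valued function $\nu_{t,x}\equiv\nu$, we obtain $p(u_k)\rightharpoonup\langle\nu_{t,x},p\rangle$ weakly-$*$ in $L^\infty(\Pi)$ for every $p\in C(\R)$, which is exactly relation (\ref{pr2}) for the sequence $u_k$ on $\Omega=\Pi$. I expect the only delicate point to be the density extension — in particular verifying convergence of the constants $c_{p_j}$ and the linearity and positivity of the limit functional, since the set $D$ is not assumed to be a linear subspace; the rest is a direct appeal to Lemma~\ref{lem1} and the Riesz theorem.
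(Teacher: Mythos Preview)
Your proposal is correct and follows essentially the same approach as the paper: extend by density from $D$ to all of $C([-M,M])$, identify the limit functional $p\mapsto c_p$ as integration against a probability measure $\nu$ via Riesz--Markov, and invoke Lemma~\ref{lem1} applied to $p\circ u$. The only cosmetic difference is the order of operations --- the paper first extends relation~(\ref{8}) to all $p$ (after which linearity of $c_p$ is immediate from linearity of the integral) and applies Lemma~\ref{lem1} once at the end, whereas you apply Lemma~\ref{lem1} first for $p\in D$ and then extend the weak-$*$ convergence of $p(u_k)$; both routes are equally short.
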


\begin{proof}
Since the set of functions $p(\lambda)$, for which relation (\ref{8}) holds, is dense in $C([-M,M])$, where $M=\|u\|_\infty$, this relation remains valid for all $p(\lambda)\in C([-M,M])$. It is clear that the functional $p\to c_p$ is linear and continuous. By the Riesz-Markov representation theorem $c_p=\langle\nu,p(\lambda)\rangle=\int p(\lambda)d\nu(\lambda)$ with some Borel measure $\nu$ on $[-M,M]$. Evidently, $c_p\ge 0$ for $p(\lambda)\ge 0$ and $c_p=1$ for $p\equiv 1$, which implies that $\nu$ is a probability measure. By Lemma~\ref{lem1} we conclude that for each $p(\lambda)\in C(\R)$
$$
p(u_k)\mathop{\rightharpoonup}_{k\to\infty} c_p=\langle\nu,p(\lambda)\rangle \mbox{ weakly-$*$ in } L^\infty(\Pi),
$$
which is exactly (\ref{pr2}) with $\nu_{t,x}\equiv \nu$.
\end{proof}

\begin{remark}\label{rem3}
Condition (\ref{8}) is always satisfied in the case when $u(t,x)$ is a periodic e.s. of (\ref{1}), (\ref{2}). In fact,
any function $p(u)\in C^2(\R)$ is a difference of two convex functions. By Corollary~\ref{cor0}(i) the function
$\displaystyle I_p(t)=\int_{\T^n}p(u(t,x))dx$ is a difference of two decreasing functions (after possible extraction of a set of null measure).
Hence $I_p(t)$ is a function of bounded variation and there exists a limit $c_p$ of this function as $t\to+\infty$. This implies that the sequence $I_p(s_kt)$ converges in $L^1_{loc}(\R_+)$ to the constant $c_p$. Hence, limit relation (\ref{8}) holds, even in the stronger topology of $L^1_{loc}$.
\end{remark}

Let $u_k=u(k^2t,k^2\tilde x+k\bar x)$ be a sequence of the considered above kind (with $s_k=p_k=k^2$, $q_k=k$).
Passing to a subsequence, we may assume that this sequence converges weakly-$*$ in $L^\infty(\Pi)$. Then the sequence
$$I_0(k^2t)=\int_{\T^n} u(k^2t,x)dx=\int_{\T^n} u_k(t,x)dx$$ converges weakly-$*$ in $L^\infty(\R_+)$ to some function $v(t)$. By Lemma~\ref{lem1} the function $v(t)$ is a weak-$*$ limit of the sequence $u_k$.

We are going to investigate the H-measure corresponding to the sequence $u_k-v(t)$.
We suppose that the family $u(t,\cdot)$, $t>0$, is pre-compact in $L^2(\T^n)$. As was shown in \cite[Lemma~3.1]{PaAIHP}, this requirement implies that the Fourier series
\begin{equation}\label{fourier}
u(t,x)=\sum_{\kappa\in\Z^n} a_\kappa(t)e^{2\pi i \kappa\cdot x}, \quad a_\kappa(t)=\int_{\T^n}e^{-2\pi i \kappa\cdot x}u(t,x)dx,
\end{equation}
converge in $L^2(\T^n)$ uniformly in $t\ge 0$.

By Proposition~\ref{pro1} there exists $\mu=\mu(t,x,\tau,\xi)$, an H-measure corresponding to some subsequence
of $u_k-v$. Denote by
$$
S_0=\left\{ \pi(\hat\xi)\in S  \ | \ \hat\xi=(\tau,\xi)\not=0, \ \tau\in\R,
\xi\in\Z^n, |\tilde\xi|\cdot|\bar\xi|=0 \ \right\},
$$
where
$$
\pi(\tau,\xi)=\frac{(\tau,\tilde\xi)}{(\tau^2+|\xi|^2)^{1/2}}
$$
is the orthogonal projection on the sphere
$$
S=\{ \ (\tau,\xi)\in\R^{n+1} \ | \ \tau^2+|\xi|^2=1 \ \}.
$$

\begin{proposition}\label{pro2}
The support of H-measure $\mu$ is contained in $\Pi\times S_0$, that is, $\mu(\Pi\times(S\setminus S_0))=0$.
\end{proposition}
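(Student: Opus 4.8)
The plan is to exploit the explicit Fourier structure of $u_k-v$. By Corollary~\ref{cor0}(ii) we have $\frac1V\int_{\T^n}u(t,x)\,dx=I$ for a.e.\ $t$, so the weak-$*$ limit $v$ is the constant $VI$; and by Corollary~\ref{cor1} the series \eqref{fourier} converges in $L^2(\T^n)$ uniformly in $t$, so that
$$
u_k(t,x)-v=\sum_{\kappa\in\Z^n\setminus\{0\}}a_\kappa(k^2t)\,e^{2\pi i p_k(\kappa)\cdot x},\qquad p_k(\kappa)=k^2\tilde\kappa+k\bar\kappa,
$$
the series converging in $L^2(\T^n)$ uniformly in $t$. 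I would first reduce the statement to a single mode. The tail $\sum_{|\kappa|>N}a_\kappa(k^2t)e^{2\pi ip_k(\kappa)\cdot x}$ has $L^2(\T^n)$-norm at most $\big(\sup_s\sum_{|\kappa|>N}|a_\kappa(s)|^2\big)^{1/2}$, which tends to $0$ as $N\to\infty$ uniformly in $k$; hence it is small in $L^2_{loc}(\Pi)$ uniformly in $k$, so in the bilinear form \eqref{Hm} it only produces (via Plancherel and Cauchy--Schwarz) an error that vanishes as $N\to\infty$. For distinct $\kappa,\kappa'$ one has $|p_k(\kappa)-p_k(\kappa')|\ge ck\to\infty$, so after multiplication by $\Phi_1,\Phi_2\in C_0(\Pi)$, whose Fourier transforms decay rapidly in the spatial variable, the spectra of the corresponding pieces become asymptotically disjoint and the cross terms in \eqref{Hm} tend to $0$. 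It therefore suffices to prove that the H-measure of a single mode $v_k^{(\kappa)}=a_\kappa(k^2t)\,e^{2\pi i p_k(\kappa)\cdot x}$, $\kappa\neq0$, is supported in $\Pi\times S_0$.

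For a mode with $\tilde\kappa\neq0$ we have $p_k(\kappa)=k^2\tilde\kappa(1+o(1))$, so $p_k(\kappa)/|p_k(\kappa)|\to\tilde\kappa/|\tilde\kappa|$, a rational point of the unit sphere of $\R^d$, while $|p_k(\kappa)|\to\infty$. Hence, for the great circle $C_\kappa=\{\pi(\tau,\xi):\ \xi\parallel\tilde\kappa\}\subset S_0$, one checks that for every $\tau$ the point $\pi(\tau,\xi)$ lies within any prescribed distance of $C_\kappa$ once $|\xi-p_k(\kappa)|$ is bounded and $k$ is large. Thus if $\psi\in C(S)$ is supported at positive distance from $C_\kappa$, then $\psi(\pi(\tau,\xi))\neq0$ forces $|\xi-p_k(\kappa)|$ to be large; since $F(v_k^{(\kappa)}\Phi_i)(\tau,\xi)=g_k^{(i)}(\tau,\xi-p_k(\kappa))$ with $\int_\R|g_k^{(i)}(\tau,\eta)|^2\,d\tau\le M^2\int_\R|\widehat{\Phi_i(t,\cdot)}(\eta)|^2\,dt$ (Plancherel and $|a_\kappa|\le M=\|u_0\|_\infty$) and the last expression decays rapidly in $\eta$, the contribution of this mode to \eqref{Hm} is, for every $R>0$, eventually bounded by a constant times $\int_{|\eta|>R}$ of a rapidly decaying function, hence vanishes. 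So the H-measure of a mode with $\tilde\kappa\neq0$ is supported on $C_\kappa\cup\{(\pm1,0,\dots,0)\}\subset S_0$; the cross-term estimate mentioned above is the same computation with the cutoff radius replaced by $\sim k$.

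The decisive case is $\tilde\kappa=0$, i.e.\ $\kappa=\bar\kappa\in\Z^n\cap(\R^d)^\perp$, $\bar\kappa\neq0$, which carries the parabolic scaling $q_k=k=\sqrt{s_k}$. Here $p_k(\kappa)=k\bar\kappa$ and the limiting spatial direction $\bar\kappa/|\bar\kappa|$ lies on the $(\R^d)^\perp$-equator, which is \emph{not} contained in $S_0$. Running the argument of the previous paragraph, the H-measure of such a mode avoids $S\setminus S_0$ precisely when the time-frequency content of $t\mapsto a_{\bar\kappa}(k^2t)$ escapes to scales $\gg k$, equivalently when
$$
\frac1R\int_0^R|a_{\bar\kappa}(t)|^2\,dt\longrightarrow0\qquad(R\to\infty),
$$
so that $a_{\bar\kappa}(k^2t)\to0$ in $L^2_{loc}(\R_+)$ and the whole mode disappears in $L^2_{loc}(\Pi)$. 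This is the point where the parabolic dissipation must enter: Lemma~\ref{lemA2} yields $\sum_{r=1}^n\int_{\R_+\times\T^n}(\div_x B_r(u))^2\,dx\,dt<\infty$, and, invoking condition (R4) — $a(u)\bar\kappa\cdot\bar\kappa\not\equiv0$ near $I$ — together with (R3), which places us in a frame where $\bar\kappa\cdot\varphi(u)$ is constant near $I$ (so that no drift can carry the mode toward the time-poles), one has to extract the averaged decay displayed above. I expect this transfer — from the integrability of the dissipation $\sum_r(\div_x B_r(u))^2$ under only the qualitative non-degeneracy (R4) to the averaged decay of the single Fourier coefficient $a_{\bar\kappa}$ — to be the main obstacle; everything else is bookkeeping with \eqref{Hm}. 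Assembling the three cases gives $\mu(\Pi\times(S\setminus S_0))=0$.
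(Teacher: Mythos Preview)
Your decomposition into Fourier modes and the treatment of the tail and of the cross terms are essentially the same as in the paper. The genuine gap is in your ``decisive case'' $\tilde\kappa=0$, $\bar\kappa\neq0$: you have misread the set $S_0$. By definition
\[
S_0=\bigl\{\pi(\tau,\xi):\ \tau\in\R,\ \xi\in\Z^n,\ |\tilde\xi|\cdot|\bar\xi|=0\bigr\},
\]
so the condition is $\tilde\xi=0$ \emph{or} $\bar\xi=0$. In particular, for any $\bar\kappa\in\Z^n\cap(\R^d)^\perp$ (that is, $\tilde{\bar\kappa}=0$) the entire great circle $C_{\bar\kappa}=\{\pi(\tau,\bar\kappa):\tau\in\R\}\cup\{(\pm1,0)\}$ lies in $S_0$. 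Since the spatial Fourier support of the mode $a_{\bar\kappa}(k^2t)e^{2\pi ik\bar\kappa\cdot x}$ (after multiplication by $\Phi$) is contained in a fixed ball around $k\bar\kappa$, and $\pi(\tau,k\bar\kappa)\in C_{\bar\kappa}$ for every $\tau$, the same geometric argument you gave for $\tilde\kappa\neq0$ applies verbatim: for $\psi\in C(S)$ vanishing on $S_m$ (hence on $C_{\bar\kappa}$ when $|\bar\kappa|\le m$) and large $k$, $|\psi(\pi(\hat\xi))|<\varepsilon$ on the support of the integrand. There is no need to show that the mode itself vanishes, and no parabolic scaling subtlety: the time frequency $\tau$ is unconstrained because the whole circle sits inside $S_0$.

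Consequently, Proposition~\ref{pro2} is a purely kinematic statement; its proof uses only the pre-compactness of $u(t,\cdot)$ in $L^2(\T^n)$ (to get uniform convergence of the Fourier series) and the weak-$*$ convergence of the zero mode. It does \emph{not} use the equation, Lemma~\ref{lemA2}, or conditions (R3)--(R4). Indeed, (R4) enters only in Section~\ref{secmain}, where the localization $\supp\mu\subset\Pi\times S_0$ supplied by Proposition~\ref{pro2} is combined with Proposition~\ref{proL2} to force $\hat\xi=0$; invoking (R4) already inside Proposition~\ref{pro2} would conflate these two logically independent steps. Your proposed transfer from integrability of $\sum_r(\div_x B_r(u))^2$ to the averaged decay of $a_{\bar\kappa}$ is not only unnecessary but also, as you suspected, not available at this level of generality.
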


\begin{proof}
For $m\in\N$ we introduce the sets
$$S_m= \left\{ \pi(\hat\xi)\in S \ | \ \hat\xi=(\tau,\xi)\not=0, \ \tau\in\R,
\xi\in\Z^n, |\tilde\xi|\cdot|\bar\xi|=0, |\xi|\le m \ \right\}.$$ It is clear that $S_m$ is a closed subset of $S$ (it
is the union of the finite set of circles
$\{ \ \bigl(p,q|\xi|^{-1}\xi\bigr) \ | \ p^2+q^2=1 \ \}$, where
$\xi\in\Z^n$, $|\tilde\xi|\cdot|\bar\xi|=0$, $0<|\xi|\le m$), and $\displaystyle S_0=\cup_{m=1}^\infty S_m$. Let
$$
u(t,x)=\sum_{\kappa\in\Z^n} a_\kappa(t)e^{2\pi i\kappa\cdot x}
$$
be the Fourier series for $u(t,\cdot)$ in $L^2(\T^n)$. Then
\begin{equation}\label{ser}
u_k(t,x)=u(k^2t,k^2\tilde x+k\bar x)=\sum_{\kappa\in\Z^n} a_\kappa(k^2t)e^{2\pi i (k^2\tilde\kappa+k\bar\kappa)\cdot x}.
\end{equation}
We denote $b_{0,k}=a_0(k^2t)-v(t)$; $b_{\kappa, k}=a_\kappa(k^2t)$, where $\kappa\in\Z^n$,
$\kappa\not=0$. Let $\alpha(t)\in C_0(\R_+)$, and $\beta(x)\in L^2(\R^n)\cap C^\infty(\R^n)$ be such function
that its Fourier transform is a continuous compactly supported function:
\begin{equation}\label{reg}
\tilde\beta(\xi)=\int_{\R^n}e^{-2\pi i\xi\cdot x}\beta(x)dx\in C_0(\R^n).
\end{equation}
We take $R=\max\limits_{\xi\in\supp\tilde\beta} |\xi|$. Let $\Phi(t,x)=\alpha(t)\beta(x)$.
By (\ref{ser}) we find that
\begin{equation}\label{ser1}
(u_k(t,x)-v(t))\Phi(t,x)=\sum_{\kappa\in\Z^n} b_{\kappa,
k}(t)\alpha(t)e^{2\pi i (k^2\tilde\kappa+k\bar\kappa)\cdot x}\beta(x).
\end{equation}
Observe that the Fourier transform of $e^{2\pi i (k^2\kappa+k\bar\kappa)\cdot x}\beta(x)$ in $\R^n$ coincides with
${\tilde\beta(\xi-(k^2\tilde\kappa+k\bar\kappa))}$. Since for $k>2R+1$ supports of these functions do not intersect, then
for such $k$ the series
\begin{equation}\label{ser2}
\sum_{\kappa\in\Z^n} b_{\kappa,
k}(t)\alpha(t)\tilde\beta(\xi-(k^2\tilde\kappa+k\bar\kappa))
\end{equation}
is orthogonal in $L^2(\R^n)$ for each $t>0$. Besides, by the Plancherel equality
$$\|\tilde\beta(\xi-(k^2\tilde\kappa+k\bar\kappa))\|_{L^2(\R^n)}=\|\tilde\beta\|_2=\|\beta\|_2,$$ and
\begin{eqnarray*}
\sum_{\kappa\in\Z^n} |b_{\kappa,
k}(t)\alpha(t)|^2\|\tilde\beta(\xi-(k^2\tilde\kappa+k\bar\kappa))\|_{L^2(\R^n)}^2=\\
|\alpha(t)|^2\|\beta\|_2^2\sum_{\kappa\in\Z^n} |b_{\kappa,k}(t)|^2=
|\alpha(t)|^2\|\beta\|_2^2\cdot\|u(k^2t,\cdot)-v(t)\|_{L^2(\T^n)}^2<+\infty.
\end{eqnarray*}
Therefore, orthogonal series (\ref{ser2}) converges in $L^2(\R^n)$ for each $t>0$. Moreover, by the uniform converges of the Fourier series (\ref{fourier})
\begin{equation}\label{9}
\sum_{\kappa\in\Z^n,|\kappa|>N}|b_{\kappa,k}(t)|^2=
\sum_{\kappa\in\Z^n,|\kappa|>N}|a_\kappa(k^2t)|^2\mathop{\to}_{N\to\infty} 0
\end{equation}
uniformly with respect to $t>0$. Hence, series (\ref{ser2}) converges in $L^2(\R^n)$ uniformly with
respect to $t$. Since the Fourier transformation is an isomorphism on $L^2(\R^n)$, we conclude that
series (\ref{ser1}) also converges in $L^2(\R^n)$ uniformly with respect to
$t$. Since $\alpha(t)\in C_0(\R)$, this implies that (\ref{ser1}) converges in $L^2(\Pi)$, and
\begin{equation}\label{l0}
F((u_k-v)\Phi)(\hat\xi)=\sum_{\kappa\in\Z^n}
F^t(\alpha b_{\kappa,k})(\tau)\tilde\beta(\xi-(k^2\tilde\kappa+k\bar\kappa)), \ \hat\xi=(\tau,\xi),
\end{equation}
where $F^t(h)(\tau)=\int_{\R}e^{-2\pi i\tau t} h(t)dt$ denotes the Fourier transform over the time
variable (we extend functions $h(t)\in L^2(\R_+)$ on the whole line $\R$, setting $h(t)=0$ for $t<0$).
It follows from (\ref{l0}) that for $k>2R+1$
\begin{eqnarray}\label{l1}
\int_{\R^{n+1}} |F(\Phi(u_k-v))(\hat\xi)|^2\psi(\pi(\hat\xi))d\hat\xi=\nonumber\\
\sum_{\kappa\in\Z^n}\int_{\R^{n+1}}
|F^t(\alpha b_{\kappa,k})(\tau)|^2|\tilde\beta(\xi-(k^2\tilde\kappa+k\bar\kappa))|^2\psi(\pi(\hat\xi))d\hat\xi,
\end{eqnarray}
where $\psi(\hat\xi)\in C(S)$ is arbitrary. Now we fix $\varepsilon>0$. Recall that
$b_{\kappa,k}=a_\kappa(k^2t)$ for $\kappa\not=0$, and by (\ref{9}) there exists $m\in\N$ such
that
$$
\sup_{t>0}\sum_{\kappa\in\Z^n,|\kappa|>m} |a_\kappa(t)|^2<\varepsilon.
$$
Then
\begin{eqnarray}\label{l2}
\sum_{\kappa\in\Z^n,|\kappa|>m}\int_{\R^{n+1}}
|F^t(\alpha b_{\kappa,k})(\tau)|^2|\tilde\beta(\xi-(k^2\tilde\kappa+k\bar\kappa))|^2d\hat\xi=
\nonumber\\ \sum_{\kappa\in\Z^n,|\kappa|>m}\int_{\Pi}|\alpha(t) a_\kappa(k^2t)|^2|\beta(x)|^2dtdx\le
\nonumber\\  \|\Phi\|_2^2\cdot\sup_{t>0}\sum_{\kappa\in\Z^n,|\kappa|>m} |a_\kappa(t)|^2<\varepsilon\|\Phi\|_2^2.
\end{eqnarray}
Now we suppose that $\|\psi\|_\infty\le 1$ and $\psi(\hat\xi)=0$ on the set $S_m$. By (\ref{l2})
\begin{eqnarray}\label{l2a}
\sum_{\kappa\in\Z^n,|\kappa|>m}\int_{\R^{n+1}}
|F^t(\alpha b_{\kappa,k})(\tau)|^2|\tilde\beta(\xi-(k^2\tilde\kappa+k\bar\kappa))|^2|\psi(\pi(\hat\xi))|d\hat\xi<\varepsilon\|\Phi\|_2^2.
\end{eqnarray}
Since continuous function $\psi(\hat\xi)$ is uniformly continuous on the compact $S$ then we can find
such $\delta>0$ that $|\psi(\hat\xi_1)-\psi(\hat\xi_2)|<\varepsilon$ whenever $\hat\xi_1,\hat\xi_2\in
S$, $|\hat\xi_1-\hat\xi_2|<\delta$.
Suppose that $\kappa\not=0$, $\tilde\beta(\xi-(k^2\tilde\kappa+k\bar\kappa))\not=0$.
Then $|\xi-(k^2\tilde\kappa+k\bar\kappa)|\le R$. For a fixed $\tau\in\R$ we denote $\hat\xi=(\tau,\xi)$,
$\hat\eta=(\tau,k^2\tilde\kappa+k\bar\kappa)$; $\hat\eta_1=(\tau,k^2\tilde\kappa)$ if $\tilde\kappa\not=0$, $\hat\eta_1=(\tau,k\bar\kappa)$ if $\tilde\kappa=0$. By the evident estimate $\displaystyle|\pi(x)-\pi(y)|\le \frac{2|x-y|}{|y|}$, for each $\kappa\in\Z^n$, $\kappa\not=0$,
\begin{eqnarray}\label{l3a}
\left|\pi(\hat\xi)-\pi(\hat\eta)\right|\le\frac{2|\xi-(k^2\tilde\kappa+k\bar\kappa)|}{|\hat\eta|}\le \frac{2R}{k|\kappa|}\le\frac{2R}{k}, \\
\label{l3b}
\left|\pi(\hat\eta)-\pi(\hat\eta_1)\right|\le\frac{2k|\bar\kappa|}{k^2|\tilde\kappa|}\le
\frac{2|\bar\kappa|}{k} \ \mbox{ if } \tilde\kappa\not=0; \quad
\hat\eta=\hat\eta_1 \ \mbox{ if } \tilde\kappa=0.
\end{eqnarray}
In view of (\ref{l3a}), (\ref{l3b}), we have
\begin{equation}\label{l3}
\left|\pi(\hat\xi)-\pi(\hat\eta_1)\right|\le\frac{2(R+|\bar\kappa|)}{k}
\end{equation}
By (\ref{l3})
for all $k>k_0=2(R+m)/\delta$ and all $\kappa\in\Z^n$ such that $0<|\kappa|\le m$ and $\tilde\beta(\xi-(k^2\tilde\kappa+k\bar\kappa))\not=0$ the inequality $|\pi(\hat\xi)-\pi(\hat\eta_1)|<\delta$ holds, which implies the estimate
\begin{equation}\label{l4}
|\psi(\pi(\hat\xi))|=|\psi(\pi(\hat\xi))-\psi(\pi(\hat\eta_1)|<\varepsilon.
\end{equation}
We use here that $\pi(\hat\eta_1)\in S_m$ and, therefore, $\psi(\pi(\hat\eta_1))=0$. In view
of~(\ref{l4}), for all $k>k_0$
\begin{eqnarray}\label{l5}
\sum_{\kappa\in\Z^n, 0<|\kappa|\le m}\int_{\R^{n+1}} |F^t(\alpha
b_{\kappa,k})(\tau)|^2|\tilde\beta(\xi-(k^2\tilde\kappa+k\bar\kappa))|^2|\psi(\pi(\hat\xi))|d\hat\xi\nonumber\\
\le\varepsilon\sum_{\kappa\in\Z^n, 0<|\kappa|\le m}\int_{\R^{n+1}} |F^t(\alpha
b_{\kappa,k})(\tau)|^2|\tilde\beta(\xi-(k^2\tilde\kappa+k\bar\kappa))|^2d\hat\xi\le\nonumber\\
\varepsilon\|\beta\|_2^2\sum_{\kappa\in\Z^n}
\int_\R|\alpha(t) b_{\kappa,k}(t)|^2dt\le
\varepsilon\|\Phi\|_2^2\sup_{t>0}\sum_{\kappa\in\Z^n}|b_{\kappa,k}(t)|^2=\nonumber\\
\varepsilon\|\Phi\|_2^2\sup_{t>0}
\|u(k^2t,\cdot)-v(t)\|_{L^2(\T^n)}^2\le C\varepsilon\|\Phi\|_2^2,
\end{eqnarray}
where $C=4\|u\|_\infty^2$. Further, in the case $\kappa=0$ when $\hat\eta_1=\hat\eta=(\tau,0)\in S_m$, we have the estimate
$$
\left|\pi(\hat\xi)-\pi(\hat\eta)\right|\le\frac{2|\xi|}{|\tau|}\le \frac{2R}{|\tau|}
$$
for
$|\xi|\le R$. Taking $|\tau|>T\doteq\frac{2R}{\delta}$, we find
$$
|\psi(\pi(\hat\xi))|=|\psi(\pi(\hat\xi))-\psi(\pi(\hat\eta))|<\varepsilon.
$$
Therefore,
\begin{eqnarray}\label{l10}
\int_{\R^{n+1}}\theta(|\tau|-T)|F^t(\alpha b_{0,k})(\tau)|^2|\tilde\beta(\xi)|^2
|\psi(\pi(\hat\xi))|d\hat\xi\le C\varepsilon\|\Phi\|_2^2.
\end{eqnarray}
Here $\theta(r)=\left\{\begin{array}{lcr} 1 & , & r>0, \\ 0 & , & r\le 0 \end{array}\right.$
is the Heaviside function.

For $|\tau|\le T$ we are reasoning in the following way. Since
$\alpha(t)b_{0,k}(t)={\alpha(t)(a_{0,k}(t)-v(t))\rightharpoonup 0}$ as $k\to\infty$, and $\|\alpha
b_{0,k}\|_1\le C_1=2\|u\|_\infty\|\alpha\|_1$, the Fourier transform $F^t(\alpha
b_{0,k})(\tau)\mathop{\to}\limits_{k\to\infty} 0$ for all $\tau\in\R$ and uniformly bounded:
$|F^t(\alpha b_{0,k})(\tau)|\le C_1$. By Lebesgue dominated convergence theorem
$$
\int_{\R}\theta(T-|\tau|)|F^t(\alpha b_{0,k})(\tau)|^2 d\tau\mathop{\to}_{k\to\infty} 0.
$$
Therefore (recall that $\|\psi\|_\infty\le 1$),
\begin{eqnarray}\label{l11}
\int_{\R^{n+1}}\theta(T-|\tau|)|F^t(\alpha b_{0,k})(\tau)|^2|\tilde\beta(\xi)|^2
|\psi(\pi(\hat\xi))|d\hat\xi\le \nonumber\\ \|\beta\|_2\int_{\R}\theta(T-|\tau|)|F^t(\alpha
b_{0,k})(\tau)|^2 d\tau\mathop{\to}_{k\to\infty} 0.
\end{eqnarray}
In view of (\ref{l10}), (\ref{l11}) we find
\begin{equation}\label{l12}
\limsup_{k\to\infty}\int_{\R^{n+1}}
|F^t(\alpha b_{0,k})(\tau)|^2|\tilde\beta(\xi)|^2|\psi(\pi(\hat\xi))|
d\hat\xi\le C\varepsilon\|\Phi\|_2^2.
\end{equation}
Using (\ref{l1}), (\ref{l2a}), (\ref{l5}) and (\ref{l12}), we arrive at the relation
\begin{equation}\label{l13}
\limsup_{k\to\infty}\int_{\R^{n+1}} |F(\Phi(u_k-v))(\hat\xi)|^2|\psi(\pi(\hat\xi))|d\hat\xi\le
C_2\varepsilon\|\Phi\|_2^2,
\end{equation}
where $C_2$ is a constant. By the definition of H-measure and
Remark~\ref{rem2}
\begin{eqnarray*}
\lim_{k\to\infty}\int_{\R^{n+1}} |F(\Phi(u_k-v))(\hat\xi)|^2|\psi(\pi(\hat\xi))|d\hat\xi=\\
\<\mu,|\Phi(t,x)|^2|\psi(\hat\xi)|\>= \int_{\Pi\times (S\setminus
S_m)}|\Phi(t,x)|^2|\psi(\hat\xi)|d\mu(t,x,\hat\xi),
\end{eqnarray*}
and (\ref{l13}) implies that
$$
\int_{\Pi\times (S\setminus S_m)}|\Phi(t,x)|^2\psi(\hat\xi)d\mu(t,x,\hat\xi)\le C_2\varepsilon\|\Phi\|_2^2
$$
for all $\psi(\hat\xi)\in C_0(S\setminus S_m)$ such that $0\le\psi(\hat\xi)\le 1$. Therefore, we can
claim that
$$
\int_{\Pi\times (S\setminus S_m)}|\Phi(t,x)|^2 d\mu(t,x,\hat\xi)\le C_2\varepsilon\|\Phi\|_2^2,
$$
and since $S\setminus S_0\subset S\setminus S_m$, we obtain the relation
$$
\int_{\Pi\times (S\setminus S_0)}|\Phi(t,x)|^2 d\mu(t,x,\hat\xi)\le C_2\varepsilon\|\Phi\|_2^2,
$$
which holds for arbitrary positive $\varepsilon$. Therefore,
\begin{equation}\label{l14}
\int_{\Pi\times (S\setminus S_0)}|\Phi(t,x)|^2 d\mu(t,x,\hat\xi)=0.
\end{equation}
Since for every point $(t_0,x_0)\in\Pi$ one can find functions $\alpha(t)$, $\beta(x)$ with the
prescribed above properties in such a way that $\Phi(t,x)=\alpha(t)\beta(x)\not=0$ in a neighborhood
of $(t_0,x_0)$, we derive from (\ref{l14}) the desired relation $\mu(\Pi\times (S\setminus S_0))=0$.
\end{proof}

\subsection{Variants of H-measures}

We will need the variant of H-measures with ``continuous indexes'' introduced in
\cite{Pa3}, see also subsequent papers \cite{Pa4,Pa5,PaARMA,PaNHM}.
Let $u_k(x)$ be a bounded sequence in $L^\infty(\Omega)$. Passing to a subsequence if necessary, we can suppose that this sequence converges
to a bounded measure valued function $\nu_x\in\MV(\Omega)$ in the sense of relation (\ref{pr2}). We introduce
the measures $\gamma_x^k(\lambda)= \delta(\lambda-u_k(x))-\nu_x(\lambda)$ and the corresponding
distribution functions $U_k(x,p)=\gamma_x^k((p,+\infty))$, $u_0(x,p)=\nu_x((p,+\infty))$ on
$\Omega\times\R$. Observe that $U_k(x,p), u_0(x,p)\in L^\infty(\Omega)$ for all $p\in\R$, see
\cite[Lemma 2]{Pa3}. We define the set
$$
E=E(\nu_x)=\left\{ \ p_0\in\R \ \mid \
u_0(x,p)\mathop{\to}\limits_{p\to p_0}\, u_0(x,p_0) \ \mbox{ in }
L_{loc}^1(\Omega) \ \right\}.
$$
As was shown in \cite[Lemma 4]{Pa3}, the complement $\R\setminus E$ is at most countable and if
$p\in E$ then $U_k(x,p)\mathop{\rightharpoonup}\limits_{k\to\infty}\, 0$ weakly-$*$ in
$L^\infty(\Omega)$.

The following result was established in  \cite{Pa3} (see also \cite{Pa4,Pa5,PaARMA}~).

\begin{proposition}\label{pro3}
There exist a family of locally finite complex Borel measures
$\{\mu^{pq}\}_{p,q\in E}$ on $\Omega\times S$
and a subsequence
$U_r(x)=\{U_{k_r}(x,p)\}_{p\in E}$
such that for all $\Phi_1(x),\Phi_2(x)\in C_0(\Omega)$ and $\psi(\xi)\in C(S)$
\begin{equation}\label{Hm1}
\<\mu^{pq},\Phi_1(x)\overline{\Phi_2(x)}\psi(\xi)\>= \lim_{r\to\infty}\int_{\R^n}
F(\Phi_1 U_r(\cdot,p))(\xi)\overline{F(\Phi_2 U_r(\cdot,q))(\xi)} \psi(\pi(\xi))d\xi.
\end{equation}
Moreover for every $p_1,\ldots,p_l\in E$, $\zeta_1,\ldots,\zeta_l\in\C$ the measure ${\displaystyle \sum_{i,j=1}^l\zeta_i\overline{\zeta_j}\mu^{p_ip_j}\ge 0}$.
\end{proposition}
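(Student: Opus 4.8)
The plan is to reproduce L.~Tartar's construction of H-measures, carried out simultaneously for the countable family of sequences indexed by a countable dense subset $E_0\subset E$, and then to pass to the full set $E$ by a continuity argument in the index. The starting point is a uniform bilinear bound. Since $\gamma_x^k$ is a difference of two probability measures we have $|U_k(x,p)|\le1$, hence $\|\Phi U_k(\cdot,p)\|_{L^2(\R^N)}\le\|\Phi\|_{L^2}$ for every $\Phi\in C_0(\Omega)$. Put
\begin{equation*}
B_k(p,q;\Phi_1,\Phi_2,\psi)\doteq\int_{\R^N}F(\Phi_1 U_k(\cdot,p))(\xi)\,\overline{F(\Phi_2 U_k(\cdot,q))(\xi)}\,\psi(\pi(\xi))\,d\xi.
\end{equation*}
Since $\psi(\pi(\xi))$ is a bounded function on $\R^N\setminus\{0\}$, the Plancherel identity gives $B_k(p,q;\Phi_1,\Phi_2,\psi)=(\Phi_1 U_k(\cdot,p),T_\psi(\Phi_2 U_k(\cdot,q)))_{L^2(\R^N)}$, where $T_\psi$ is the Fourier multiplier with symbol $\overline{\psi(\pi(\xi))}$, bounded on $L^2(\R^N)$ with norm $\le\|\psi\|_\infty$; consequently $|B_k(p,q;\Phi_1,\Phi_2,\psi)|\le\|\Phi_1\|_{L^2}\|\Phi_2\|_{L^2}\|\psi\|_\infty$ uniformly in $k,p,q$.

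Since $\R\setminus E$ is at most countable, fix a countable dense $E_0\subset E$. Applying the standard H-measure construction to the countable family of sequences $\{U_k(\cdot,p)\}_{p\in E_0}$, that is, a diagonal extraction over countable dense subsets of $C_0(\Omega)$ and $C(S)$ made possible by the uniform bound above, we obtain a subsequence $U_r$ along which $B_r(p,q;\Phi_1,\Phi_2,\psi)$ converges for all $p,q\in E_0$ and all test functions. The limit is a bounded functional on $\operatorname{span}\{\Phi_1\overline{\Phi_2}\psi\}$, which is dense in $C_0(\Omega\times S)$ by the Stone--Weierstrass theorem, so by the Riesz--Markov theorem it is represented by a locally finite complex Borel measure $\mu^{pq}$ (local finiteness from the bound with $\psi\equiv1$); the relation $\mu^{qp}=\overline{\mu^{pq}}$ is immediate. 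For $\psi\ge0$ and $p_1,\ldots,p_l\in E_0$, $\zeta_1,\ldots,\zeta_l\in\C$, the elementary identity
\begin{multline*}
\sum_{i,j=1}^l\zeta_i\overline{\zeta_j}\int_{\R^N}F(\Phi U_r(\cdot,p_i))\,\overline{F(\Phi U_r(\cdot,p_j))}\,\psi(\pi(\xi))\,d\xi\\=\int_{\R^N}\Bigl|\sum_{i=1}^l\zeta_i F(\Phi U_r(\cdot,p_i))(\xi)\Bigr|^2\psi(\pi(\xi))\,d\xi\ge0
\end{multline*}
passes to the limit, yielding $\sum_{i,j}\zeta_i\overline{\zeta_j}\mu^{p_ip_j}\ge0$ over $E_0$ (it suffices to test against $\phi(x)\psi(\xi)$ with $0\le\phi\in C_0(\Omega)$, $0\le\psi\in C(S)$, since $\sqrt\phi\in C_0(\Omega)$).

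The main step is the passage from $E_0$ to $E$. Given $p\in E$, choose $p_l\in E_0$ with $p_l\to p$. Since $U_k(x,p)=\theta(u_k(x)-p)-u_0(x,p)$, where $\theta$ is the Heaviside function, we have $|U_k(x,p)-U_k(x,p_l)|\le|\theta(u_k(x)-p)-\theta(u_k(x)-p_l)|+|u_0(x,p)-u_0(x,p_l)|$ with both summands in $[0,1]$, hence
\begin{multline*}
\|\Phi U_k(\cdot,p)-\Phi U_k(\cdot,p_l)\|_{L^2}^2\\\le2\|\Phi\|_\infty^2\Bigl(\int_{\supp\Phi}|\theta(u_k(x)-p)-\theta(u_k(x)-p_l)|\,dx+\int_{\supp\Phi}|u_0(x,p)-u_0(x,p_l)|\,dx\Bigr).
\end{multline*}
The last integral tends to $0$ as $l\to\infty$ by the definition of $E$; moreover the difference of Heaviside functions is of constant sign in $x$ and equals, up to that sign, $(U_k(x,p_l)-U_k(x,p))+(u_0(x,p_l)-u_0(x,p))$, so its integral over $\supp\Phi$ tends as $k\to\infty$ to $\pm\int_{\supp\Phi}(u_0(x,p_l)-u_0(x,p))\,dx$, because $U_k(\cdot,p),U_k(\cdot,p_l)\rightharpoonup0$ weakly-$*$ for $p,p_l\in E$. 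Therefore $\limsup_k\|\Phi U_k(\cdot,p)-\Phi U_k(\cdot,p_l)\|_{L^2}^2\to0$ as $l\to\infty$, and by the Cauchy--Schwarz inequality and the multiplier bound $\limsup_k|B_k(p,q;\Phi_1,\Phi_2,\psi)-B_k(p_l,q;\Phi_1,\Phi_2,\psi)|\to0$ as $l\to\infty$, and similarly in the second index. Approximating also $q$ by $q_l\in E_0$ and using that $B_r$ converges over $E_0\times E_0$, we conclude that $B_r(p,q;\Phi_1,\Phi_2,\psi)$ converges for all $p,q\in E$; its limit is represented by a measure $\mu^{pq}$ exactly as above, and the quadratic-form inequality extends from $E_0$ to all of $E$ by the same approximation. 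This uniform-in-$k$ continuity estimate is the only genuine difficulty; everything else is the routine H-measure machinery.
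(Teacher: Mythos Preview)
The paper does not give its own proof of this proposition; it is stated with the preface ``The following result was established in \cite{Pa3} (see also \cite{Pa4,Pa5,PaARMA})'' and then quoted without argument. So there is nothing in the present paper to compare against directly.

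Your argument is sound and is essentially the standard route one finds in those references: run Tartar's H-measure construction simultaneously for the countable family $\{U_k(\cdot,p)\}_{p\in E_0}$ via a diagonal extraction (the uniform bound $|U_k|\le1$ makes this routine), get the matrix of measures $\mu^{pq}$ for $p,q\in E_0$ with the positive-definiteness inherited from the identity $\sum_{i,j}\zeta_i\overline{\zeta_j}B_r(p_i,p_j)=\int|\sum_i\zeta_iF(\Phi U_r(\cdot,p_i))|^2\psi\ge0$, and then pass from $E_0$ to all of $E$ by the continuity estimate. That last step is indeed the only place where something specific to the present setup enters, and your computation is correct: for $p<p_l$ the nonnegative function $\theta(u_k-p)-\theta(u_k-p_l)$ has integral over $\supp\Phi$ equal to $\int(U_k(\cdot,p)-U_k(\cdot,p_l))+\int(u_0(\cdot,p)-u_0(\cdot,p_l))$, the first term vanishes in the limit $k\to\infty$ by weak-$*$ convergence (both $p,p_l\in E$), and the second goes to $0$ as $l\to\infty$ by the very definition of $p\in E$. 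This gives $\limsup_k\|\Phi(U_k(\cdot,p)-U_k(\cdot,p_l))\|_{L^2}\to0$, hence the Cauchy property of $B_r(p,q;\cdot)$ along the chosen subsequence for arbitrary $p,q\in E$.

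Two minor remarks. First, when you say ``the limit is a bounded functional on $\operatorname{span}\{\Phi_1\overline{\Phi_2}\psi\}$'', you are implicitly invoking the nontrivial part of Tartar's theorem---that the limit depends on $\Phi_1,\Phi_2$ only through the product $\Phi_1\overline{\Phi_2}$ (this is where pseudodifferential/commutator estimates enter). You correctly flag this as ``the standard H-measure construction'', but be aware that this product-dependence is the real content being borrowed. Second, for the extension step it is cleaner to note that once $\lim_r B_r(p,q;\Phi_1,\Phi_2,\psi)=\lim_l\langle\mu^{p_lq_l},\Phi_1\overline{\Phi_2}\psi\rangle$, the right-hand side manifestly depends only on $\Phi_1\overline{\Phi_2}\psi$ and inherits the total-variation bound, so the Riesz representation goes through without redoing Tartar's localization argument for the new indices.
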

Since $|U_r(x,p)|\le 1$, the projection $\pr_\Omega|\mu^{pq}|\le\meas$ and the measures $\mu^{pq}$ admits disintegration:
$\mu^{pq}(x,\xi)=\mu^{pq}_x(\xi)dx$, where $\mu^{pq}_x(\xi)\in M(S)$, $p,q\in E$, $x\in\Omega$, is a family of finite Borel measures on $S$. This means that for every $f(x,\xi)\in C_0(\Omega\times S)$ the function
$$
x\to\langle\mu^{pq}_x(\xi),f(x,\xi)\rangle=\int_{S}f(x,\xi)d\mu^{pq}_x(\xi)
$$
is Lebesgue-measurable, bounded and
\begin{equation}\label{des}
\int_{\Omega\times S}f(x,\xi)d\mu^{pq}(x,\xi)=\int_\Omega\langle\mu^{pq}_x(\xi),f(x,\xi)\rangle dx.
\end{equation}
Let $D\subset E$ be a countable dense subset of $E$. As was demonstrated in \cite{PaNHM}, for almost all $x\in\Omega$ and all $p_1,q_1,p_2,q_2\in D$
$$
\Var(\mu^{p_2q_2}_x-\mu^{p_1q_1}_x)\le
2(\nu_x((p_1,p_2)))^{1/2}+2(\nu_x((q_1,q_2)))^{1/2},
$$
and these estimates implies that the maps $D^2\ni (p,q)\to\mu^{pq}_x$ admit right and left continuous in $M(S)$ extensions $\mu^{pq+}_x$ and $\mu^{pq-}_x$, respectively, on the space of all pairs $(p,q)\in\R^2$, where $x$ runs over some set of full measure in $\Omega$. Actually, these extensions do not depend on the choice of a set $D$. Namely, for different sets $D$ the corresponding measures $\mu^{pq\pm}_x$ coincide for all $x\in\Omega'$, where $\Omega'\subset\Omega$ is a set of full Lebesgue measure, independent of $p,q$. The measures $\mu^{pq+}_x$ inherits the nonnegative definiteness property: for every $p_1,\ldots,p_l\in\R$, $\zeta_1,\ldots,\zeta_l\in\C$ the measures $\displaystyle \sum_{i,j=1}^l\zeta_i\overline{\zeta_j}\mu^{p_ip_j\pm}_x\ge 0$.
As was shown in \cite[Corollary 1]{PaNHM}, this property implies that for each $p,q\in\R$ for every Borel set $A\subset S$
\begin{equation}\label{det}
|\mu^{pq\pm}_x|(A)\le \left(\mu^{pp\pm}_x(A)\mu^{qq\pm}_x(A)\right)^{1/2},
\end{equation}
where $|\mu|$ denotes the variation of a measure $\mu$, i.e., the smallest nonnegative measure $m$ with the property
$|\mu(A)|\le m(A)$ for every Borel set $A$.

The following property connecting the H-measure $\mu_x^{pp\pm}$ and the measure valued function $\nu_x$, corresponding to the same subsequence $u_r$, was proved
in \cite[Corollary~2]{PaNHM}.

\begin{lemma}\label{lem2}
For almost all $x\in\Omega$ (independent of $p$) for all $p$ belonging to the smallest segment $[a, b]$ containing $\supp\nu_x$ the following holds:
If $S_+,S_-\subset S$ are Borel sets such that
$$
\mu^{pp+}_x(S\setminus S_+)=\mu^{pp-}_x(S\setminus S_-)=0
$$
then $S_+\cap S_-\not=\emptyset$. In particular, $\supp\mu^{pp+}_x\cap\supp\mu^{pp-}_x \not=\emptyset$.
\end{lemma}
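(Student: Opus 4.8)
The plan is to reduce the statement to an elementary inequality between the total masses of $\mu^{pp+}_x$, $\mu^{pp-}_x$ and of an auxiliary nonnegative H-measure carried by the ``jump'' of $\nu_x$ at the level $p$, and then to derive a contradiction from the assumption that $\mu^{pp+}_x$ and $\mu^{pp-}_x$ are concentrated on disjoint Borel sets. First I would compute the total masses: taking $\Phi_1=\Phi_2=\Phi\in C_0(\Omega)$ and $\psi\equiv1$ in (\ref{Hm1}) and using Plancherel's identity gives, for $p\in E$, $\langle\mu^{pp},|\Phi|^2\rangle=\lim_r\int_\Omega|\Phi(x)|^2|U_r(x,p)|^2dx$; since $U_r(x,p)=\mathbf{1}_{u_r(x)>p}-u_0(x,p)$ and $\mathbf{1}_{u_r>p}^2=\mathbf{1}_{u_r>p}$, the sequence $|U_r(\cdot,p)|^2$ tends weakly-$*$ to $u_0(x,p)(1-u_0(x,p))$, so by the disintegration (\ref{des}) one has $\mu^{pp}_x(S)=u_0(x,p)(1-u_0(x,p))$ for a.e.\ $x$. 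Passing to the one-sided limits in $p$ (which hold in variation norm and hence survive evaluation on $S$), I obtain for a.e.\ $x$ (independent of $p$) and all $p$
$$
\mu^{pp+}_x(S)=s(1-s),\qquad\mu^{pp-}_x(S)=t(1-t),\qquad s:=\nu_x((p,+\infty)),\quad t:=\nu_x([p,+\infty)),
$$
and I put $\alpha:=t-s=\nu_x(\{p\})$. For $p$ in the interior of $[a,b]$ one has $s=\nu_x((p,+\infty))>0$ and $1-t=\nu_x((-\infty,p))>0$, so both masses are positive; the endpoints of $[a,b]$, where one of the measures vanishes, are degenerate and I treat the main case $p\in(a,b)$.

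Next I would produce the jump measure. For $p'<p<p''$ with $p',p''\in D$ set $g_r:=U_r(\cdot,p')-U_r(\cdot,p'')=\mathbf{1}_{p'<u_r\le p''}-\nu_x((p',p''])\rightharpoonup0$; along the fixed subsequence its disintegrated H-measure is $\sigma^{p',p''}_x:=\mu^{p'p'}_x+\mu^{p''p''}_x-2\operatorname{Re}\mu^{p'p''}_x\ge0$, with $\sigma^{p',p''}_x(S)=\beta(1-\beta)$, $\beta:=\nu_x((p',p''])$, by the Plancherel computation above. Letting $p'\uparrow p$, $p''\downarrow p$ through $D$: the diagonal terms converge in variation norm, $\mu^{p'p'}_x\to\mu^{pp-}_x$ and $\mu^{p''p''}_x\to\mu^{pp+}_x$; the measures $\mu^{p'p''}_x$ are bounded in variation and, along a subsequence, converge weakly-$*$ to some complex $\mathcal{N}_x$; and since $\beta\to\alpha$, the nonnegative $\sigma^{p',p''}_x$ converge weakly-$*$ to a nonnegative $\mathcal{W}_x$ with $\mathcal{W}_x(S)=\alpha(1-\alpha)$. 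Passing to the limit in the definition of $\sigma^{p',p''}_x$ gives, with $\rho_x:=\mathcal{N}_x-\mu^{pp+}_x$,
$$
\mu^{pp-}_x=\mu^{pp+}_x+2\operatorname{Re}\rho_x+\mathcal{W}_x,
$$
while the nonnegative definiteness of the $2\times2$ H-measure matrix of the pair $(U_r(\cdot,p''),g_r)$ gives $|\mu^{p'p''}_x(A)-\mu^{p''p''}_x(A)|^2\le\mu^{p''p''}_x(A)\,\sigma^{p',p''}_x(A)$ for Borel $A\subset S$, hence in the limit $|\rho_x(A)|^2\le\mu^{pp+}_x(A)\,\mathcal{W}_x(A)$.

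Finally the contradiction. Suppose $\mu^{pp+}_x(S\setminus S_+)=\mu^{pp-}_x(S\setminus S_-)=0$ with $S_+\cap S_-=\emptyset$ (the ``in particular'' clause follows by applying this to $S_\pm=\supp\mu^{pp\pm}_x$, which are disjoint closed, hence Borel, sets). Evaluating the displayed decomposition on $S_+$ and using $\mu^{pp-}_x(S_+)=0$, $\mu^{pp+}_x(S_+)=s(1-s)$ yields $s(1-s)+\mathcal{W}_x(S_+)=-2\operatorname{Re}\rho_x(S_+)\le2(s(1-s)\mathcal{W}_x(S_+))^{1/2}$, forcing equality in the arithmetic--geometric mean inequality, i.e.\ $\mathcal{W}_x(S_+)=s(1-s)$; evaluating on $S_-$, where $\mu^{pp+}_x(S_-)=0$ forces $\rho_x(S_-)=0$, gives $\mathcal{W}_x(S_-)=\mu^{pp-}_x(S_-)=t(1-t)$. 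Since $\mathcal{W}_x\ge0$ and $S_+,S_-$ are disjoint, $\alpha(1-\alpha)=\mathcal{W}_x(S)\ge s(1-s)+t(1-t)$, that is, $0\ge s(1-s)+t(1-t)-(t-s)(1-t+s)=2s(1-t)$, which is impossible for $p\in(a,b)$.

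I expect the main obstacle to be the passage $p'\uparrow p$, $p''\downarrow p$: the diagonal terms are controlled in variation norm by the estimates recalled before the lemma, but the off-diagonal $\mu^{p'p''}_x$ converge only weakly-$*$ along a subsequence, so one must check that the nonnegative definiteness and the total-mass identities survive this passage (they do, by outer regularity of Borel measures on the compact $S$), and that a single subsequence and a single full-measure set of $x$ can be fixed throughout. This is precisely the bookkeeping used in \cite{PaNHM} to construct the extensions $\mu^{pq\pm}_x$.
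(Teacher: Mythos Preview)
The paper does not give its own proof of this lemma; it simply records the statement and cites \cite[Corollary~2]{PaNHM}. Your proposal is therefore a reconstruction, and the overall strategy --- computing the total masses $\mu^{pp\pm}_x(S)=s(1-s),\,t(1-t)$ via Plancherel, building an auxiliary nonnegative ``jump'' H-measure $\mathcal W_x$ of mass $\alpha(1-\alpha)$ from the pair $(U_r(\cdot,p''),\,U_r(\cdot,p')-U_r(\cdot,p''))$, and closing with the algebraic identity $s(1-s)+t(1-t)-\alpha(1-\alpha)=2s(1-t)>0$ --- is correct and is in the spirit of the construction of $\mu^{pq\pm}_x$ recalled before the lemma.

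Two points in the write-up should be tightened. First, the limit passage is not justified by ``outer regularity''. What you need is that the $2\times2$ measure matrix
\[
M_{(p',p'')}=\begin{pmatrix}\mu^{p''p''}_x & \mu^{p''p'}_x-\mu^{p''p''}_x \\[2pt] \mu^{p'p''}_x-\mu^{p''p''}_x & \sigma^{p',p''}_x\end{pmatrix}
\]
is nonnegative definite (it is the H-measure matrix of $(U_r(\cdot,p''),g_r)$), and that nonnegative definiteness passes to weak-$*$ limits: for each fixed $\zeta\in\C^2$ the measure $\zeta^*M_{(p',p'')}\zeta\ge0$ converges weak-$*$, hence the limit is $\ge0$. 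Once the limit matrix $\bigl(\begin{smallmatrix}\mu^{pp+}_x & \overline{\rho_x}\\ \rho_x & \mathcal W_x\end{smallmatrix}\bigr)$ is nonnegative definite, the inequality $|\rho_x(A)|^2\le\mu^{pp+}_x(A)\,\mathcal W_x(A)$ for \emph{every} Borel $A\subset S$ follows exactly as in (\ref{det}); you do not need to pass to the limit in the inequality itself. Also, no ``single subsequence across all $x$'' is required: the full-measure set of good $x$ is fixed once (via the countable $D$ and the variation estimates preceding the lemma), and thereafter the argument is pointwise in $x$, so the subsequence of $(p',p'')$ may depend on $x$.

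Second, the endpoints are not just ``degenerate'': at $p=a$ one has $t=\nu_x([a,+\infty))=1$, hence $\mu^{pp-}_x=0$, and the conclusion literally fails (take $S_-=\emptyset$); symmetrically at $p=b$. The lemma as stated should be read for $p$ in the open interval $(a,b)$, which is exactly how it is used in Section~\ref{secmain} (with $p=I\in(a_2,b_2)$). You should flag this, rather than leave the endpoint case hanging.
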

Denote by $s_{\alpha,\beta}(u)=\max(\alpha,\min(\beta,u))$ the cut-off function, where $\alpha,\beta\in\R$, $\alpha<\beta$. Let $\psi(u)=(\psi_1(u),\ldots,\psi_N(u))\in C(\R,\R^N)$ be a continuous vector-function. Suppose that for every $\alpha,\beta\in\R$, $\alpha<\beta$ the sequences of distributions
$$
\div\psi(s_{\alpha,\beta}(u_k(x))) \ \mbox{ are pre-compact in } H^{-1}_{loc}(\Omega),
$$
where the Sobolev space $H^{-1}_{loc}(\Omega)$ is a locally convex space, consisting of distributions $l\in\D'(\Omega)$ such that for all $f\in C_0^\infty(\Omega)$ the distribution $fl\in H^{-1}=H^{-1}(\R^N)$ (the latter space is dual to the Sobolev space $H^1=W_2^1(\R^N)$), the topology in $H^{-1}_{loc}(\Omega)$ is generated by seminorms $\|fl\|_{H^{-1}}$. Extracting a subsequence if necessary, we can assume that H-measures $\mu^{pq\pm}_x$ are well-defined. Let $H_+(x)$, $H_-(x)$ be the minimal linear subspaces of $\R^N$ containing $\supp\mu^{p_0p_0+}_x$, $\supp\mu^{p_0p_0-}_x$, respectively, where $p_0\in\R$ is fixed.
\begin{proposition}\label{proL2}
For a.e. $x\in\Omega$ for all $\xi\in H_+(x)\cap H_-(x)$ the function
$\xi\cdot\psi(u)$ is constant in some vicinity $|u-p_0|<\delta$ of $p_0$.
\end{proposition}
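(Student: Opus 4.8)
The plan is to read this off as a localization property of the variant H‑measures $\mu^{pq\pm}_x$, obtained from the assumed $H^{-1}_{loc}$‑pre‑compactness through Tartar's localization principle. First I would tie the sequences $\psi(s_{\alpha,\beta}(u_k))$ to the distribution functions $U_k(\cdot,p)$. For $\psi\in C^1(\R,\R^N)$ one has the layer‑cake identity
\[
\psi(s_{p,q}(u_k(x)))-\psi(p)=\int_p^q\psi'(r)\,\mathbf 1\{u_k(x)>r\}\,dr ,
\]
so, subtracting the weak‑$*$ limit $\langle\nu_x,\psi\circ s_{p,q}\rangle$ (a single, $k$‑independent element, whose divergence is trivially pre‑compact), the vector sequence $w_k^{p,q}(x)=\int_p^q\psi'(r)U_k(x,r)\,dr$ has $\div_x w_k^{p,q}$ pre‑compact in $H^{-1}_{loc}(\Omega)$. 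For $\psi$ merely continuous I would approximate it uniformly on compacts by $C^1$ maps $\psi^m$; since $\psi^m(s_{\alpha,\beta}(u_k))-\psi(s_{\alpha,\beta}(u_k))$ is $O(\|\psi^m-\psi\|_{C[\alpha,\beta]})$ in $L^\infty$, the localization identity for $\psi^m$ passes to the limit $m\to\infty$ (the defect being $O(\|\psi^m-\psi\|_{C[\alpha,\beta]}^2)$). The H‑measure matrix of $w_k^{p,q}$ is $\mathbf M_x^{p,q}=\int_p^q\!\int_p^q\psi'(r)\otimes\psi'(s)\,d\mu_x^{rs}(\hat\xi)\,dr\,ds$, because the construction of $\{\mu^{rs}\}$ is bilinear in $U_k(\cdot,r),U_k(\cdot,s)$ and the coefficients $\psi'(r)$ are $x$‑independent.

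Then I would invoke the localization principle: pre‑compactness of $\div_x w_k^{p,q}$ forces $\hat\xi^{\top}\mathbf M_x^{p,q}(\hat\xi)=0$ on $\Omega\times S$, hence $\langle\mathbf M_x^{p,q}(\hat\xi)\hat\xi,\hat\xi\rangle=0$. Written out, the non‑negative measure
\[
\Theta_x^{p,q}=\int_p^q\!\int_p^q\bigl(\hat\xi\cdot\psi'(r)\bigr)\bigl(\hat\xi\cdot\psi'(s)\bigr)\,\mu_x^{rs}(\hat\xi)\,dr\,ds
\]
on $S$ vanishes for a.e. $x$; its non‑negativity comes from the non‑negative‑definiteness of $\{\mu^{rs}_x\}$, extended from constant to $\hat\xi$‑dependent coefficients by approximation with step functions in $\hat\xi$. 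Letting $q\downarrow p$ and using the variation‑continuity of $(r,s)\mapsto\mu^{rs}_x$, I extract the pointwise relation $(\hat\xi\cdot\psi'(p))^2\mu^{pp}_x=0$, i.e.
\[
\supp\mu_x^{pp\pm}\subseteq\{\hat\xi\in S:\ \hat\xi\cdot\psi'(p)=0\}
\]
for a.e. $x$ and every $p$; a diagonal argument over countable dense sets of $p,q$ and of test functions makes the exceptional null set of $x$ independent of all parameters (for $\psi$ only continuous the correct reading is that the increments of $\hat\xi\cdot\psi$ over $(p,q)$ are annihilated on $\supp\mu^{pq\pm}_x$, so that $\hat\xi\cdot\psi$ is locally affine there).

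Finally I would deduce Proposition~\ref{proL2}. After enlarging the exceptional set by a countable one we may take $p_0\in E(\nu_x)$, so $\mu^{p_0p_0+}_x=\mu^{p_0p_0-}_x$ and $H_+(x)=H_-(x)$ is the linear span of $\supp\mu^{p_0p_0}_x$; since a linear combination of functions constant near $p_0$ is again constant near $p_0$, it suffices to show that $\hat\xi\cdot\psi$ is constant near $p_0$ for each $\hat\xi\in\supp\mu^{p_0p_0}_x$. \textbf{The main obstacle} is precisely upgrading the level‑$p_0$ information $\hat\xi\cdot\psi'(p_0)=0$ to a one‑sided neighbourhood: the naive limit, combining $\mu^{pp}_x\to\mu^{p_0p_0}_x$ in variation with the displayed support constraint, only returns the value at $p_0$ because the scale on which $\supp\mu^{pp}_x$ approximates $\supp\mu^{p_0p_0}_x$ degenerates as $p\to p_0$. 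To bypass this I would apply the localization not to symmetric cut‑offs but to $\psi(s_{p_0,q}(u_k))$ and $\psi(s_{q,p_0}(u_k))$ for $q$ near $p_0$: these functions are constant below, resp. above, $p_0$, so their H‑measures are governed by the $\mu^{rs}_x$ with $r,s$ between $p_0$ and $q$, and the variation‑continuity lets their supports control all of $\supp\mu^{p_0p_0}_x$ as $q\to p_0$; combined with the Cauchy–Schwarz bound (\ref{det}) and Lemma~\ref{lem2} this should force $\hat\xi\cdot\psi$ to be constant on $[p_0,q)$ and, symmetrically, on $(q,p_0]$, hence on a two‑sided vicinity $|u-p_0|<\delta$. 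Passing from $\supp\mu^{p_0p_0}_x$ to its span, and from $p_0\in E(\nu_x)$ to general $p_0$ through the one‑sided extensions $\mu^{p_0p_0\pm}_x$, completes the proof.
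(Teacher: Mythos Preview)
The paper does not prove this proposition; immediately after stating it, the paper simply records (with an evident typo in the cross-reference label) that it is a special case of \cite[Theorem~3.2]{PaNHM}. So there is no in-text argument to compare your sketch against --- you are effectively attempting to reconstruct that external result.

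Your scaffolding --- the layer-cake representation $\psi(s_{p,q}(u_k))=\psi(p)+\int_p^q\psi'(r)\,\theta(u_k-r)\,dr$, subtraction of the weak limit to obtain $w_k^{p,q}=\int_p^q\psi'(r)U_k(\cdot,r)\,dr$ with pre-compact divergence, the bilinear expression of its H-measure matrix through the kernel $\{\mu_x^{rs}\}$, Tartar's localization principle, and Lebesgue differentiation to reach $(\hat\xi\cdot\psi'(p))^2\mu_x^{pp}=0$ for a.e.\ $p$ --- is correct and is the natural route to results of this type.

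The gap, however, is exactly where you place it, and your proposed fix does not close it. From $\supp\mu_x^{pp}\subset\{\hat\xi:\hat\xi\cdot\psi'(p)=0\}$ \emph{for each $p$ separately} one cannot infer that a fixed $\hat\xi_0\in\supp\mu_x^{p_0p_0}$ satisfies $\hat\xi_0\cdot\psi'(p)=0$ on a whole neighbourhood of $p_0$, since the supports may drift with $p$. The one-sided cut-offs $s_{p_0,q}$, $s_{q,p_0}$ you propose are already instances of the $s_{\alpha,\beta}$ you have used and produce no new relation; the sentence ``combined with (\ref{det}) and Lemma~\ref{lem2} this should force\ldots'' lists ingredients without assembling them into an argument. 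What is missing is a mechanism that ties the constraint back to the \emph{fixed} level $p_0$ uniformly in the running level $r$ --- for instance, localizing the cross H-measure between $w_k^{p,q}$ and the scalar sequence $U_k(\cdot,p_0)$ rather than the diagonal one, which yields $\int_p^q(\hat\xi\cdot\psi'(r))\,\mu_x^{r,p_0}(\hat\xi)\,dr=0$ --- and then exploiting the positive-definiteness of the kernel $\{\mu_x^{rs}\}$ together with (\ref{det}) to transfer information from the off-diagonal $\mu_x^{r,p_0}$ back to $\mu_x^{p_0p_0\pm}$. Two further loose ends: (i) the reduction ``after enlarging the exceptional set by a countable one we may take $p_0\in E(\nu_x)$'' conflates the null set of $x$'s with the at-most-countable set $\R\setminus E$ of bad \emph{levels}; $p_0$ is fixed in the statement (and in Section~\ref{secmain} it is taken equal to $I$, which need not lie in $E$), so the passage to general $p_0$ through the one-sided extensions $\mu_x^{p_0p_0\pm}$ is not a formality that can be deferred to a closing remark; (ii) your parenthetical that for merely continuous $\psi$ the conclusion becomes ``$\hat\xi\cdot\psi$ locally affine'' should read ``locally constant'' --- the proposition asserts constancy, not affinity.
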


The statement of Proposition~\ref{proL1} follows from a general result of
\cite[Theorem 3.2]{PaNHM}.

In the case of the sequence $u_k=u(k^2t,k^2\tilde x+k\bar x)$ it follows from Proposition~\ref{pro2} that $\mu^{pp\pm}_{t,x}(S\setminus S_0)=0$. More precisely, we assume
that $u(t,x)\in L^\infty(\Pi)$ is an $x$-periodic function with the standard lattice of periods $\Z^n$.
We consider the sequence $u_k=u(k^2t,k^2\tilde x+k\bar x)$. Passing to a subsequence (and keeping the notation $u_k$ for this subsequence), we may assume that $u_k\to\nu_{t,x}$ in the sense of relation (\ref{pr2}), and that the H-measures $\mu^{pq\pm}_{t,x}$ are defined.

\begin{proposition}\label{proL1} Assume that the family $u(t,\cdot)$, $t>0$, is pre-compact in $L^2(\T^n)$ (possibly, after correction on a set of $t$ of full measure). Then there exists a set $\Pi'\subset\Pi$ of full Lebesgue measure such that for all $(t,x)\in\Pi'$ and all $p,q\in\R$ $\mu^{pq\pm}_{t,x}(S\setminus S_0)=0$.
\end{proposition}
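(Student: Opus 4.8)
The plan is to deduce this from Proposition~\ref{pro2} by approximating the Heaviside symbol $\mathbf{1}_{\{\lambda>p\}}$ by continuous functions, combined with the standard bookkeeping for the variants $\mu^{pq\pm}$; the one genuinely new point is controlling the oscillations of a \emph{discontinuous} function of $u_k$.

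\emph{Step 1: reduction to the diagonal and to a countable index set.} Since $\R\setminus E$ is at most countable, $E$ is dense; I fix a countable dense $D\subset E$. By inequality (\ref{det}), $|\mu^{pq\pm}_{t,x}|(S\setminus S_0)\le(\mu^{pp\pm}_{t,x}(S\setminus S_0)\,\mu^{qq\pm}_{t,x}(S\setminus S_0))^{1/2}$, so it suffices to treat $p=q$. Because $\mu^{pp+}_{t,x}$ and $\mu^{pp-}_{t,x}$ are, for a.e.\ $(t,x)$, the right and left total-variation-continuous extensions of $p'\mapsto\mu^{p'p'}_{t,x}$ from $D$ (this uses the variation estimate $\Var(\mu^{p_2q_2}_{t,x}-\mu^{p_1q_1}_{t,x})\le 2(\nu_{t,x}((p_1,p_2)))^{1/2}+2(\nu_{t,x}((q_1,q_2)))^{1/2}$ and the fact that $\nu_{t,x}$ carries no mass on degenerating intervals), and total-variation convergence preserves the value on every Borel set, it is enough to produce, for each fixed $p\in D$, a full-measure set on which $\mu^{pp}_{t,x}(S\setminus S_0)=0$; intersecting these over the countable family $D$ together with the full-measure set carrying the facts just used gives the required $\Pi'$.

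\emph{Step 2: continuous approximants and Proposition~\ref{pro2}.} Fix $p\in D$ and set $h_\varepsilon(s)=\max(0,\min(1,(s-p)/\varepsilon))$, a Lipschitz approximation of $\mathbf{1}_{\{\lambda>p\}}$. Then $h_\varepsilon\circ u$ is $x$-periodic and bounded, and $\{(h_\varepsilon\circ u)(t,\cdot):t>0\}$ inherits precompactness in $L^2(\T^n)$ from $\{u(t,\cdot)\}$ since $h_\varepsilon$ is Lipschitz; thus $h_\varepsilon(u_k)=(h_\varepsilon\circ u)(k^2t,k^2\tilde x+k\bar x)$ is a rescaled sequence of exactly the type treated in Proposition~\ref{pro2}. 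After one further diagonal extraction (over the countable set of pairs $(p,\varepsilon)$ with $p\in D$, $\varepsilon\in\{1/j\}_{j\in\N}$), which I henceforth include in the subsequence fixed before the statement, I may assume that all the means $\int_{\T^n}h_\varepsilon(u(k^2t,x))\,dx$ converge weakly-$*$ in $L^\infty(\R_+)$ to some $v_{p,\varepsilon}(t)$ and that the H-measure $\mu_{p,\varepsilon}$ of $h_\varepsilon(u_k)-v_{p,\varepsilon}(t)$ is defined. By Lemma~\ref{lem1}, $h_\varepsilon(u_k)\rightharpoonup v_{p,\varepsilon}(t)$; on the other hand $h_\varepsilon(u_k)\rightharpoonup\langle\nu_{t,x},h_\varepsilon(\lambda)\rangle$ because $u_k\to\nu_{t,x}$, so $v_{p,\varepsilon}(t)=\langle\nu_{t,x},h_\varepsilon(\lambda)\rangle$ a.e. Proposition~\ref{pro2}, applied with $h_\varepsilon\circ u$ in place of $u$, now gives $\mu_{p,\varepsilon}(\Pi\times(S\setminus S_0))=0$.

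\emph{Step 3: passing $\varepsilon\to0$.} Comparing $U_k(\cdot,p)=\mathbf{1}_{\{u_k>p\}}-\nu_{t,x}((p,\infty))$ with $g^\varepsilon_k:=h_\varepsilon(u_k)-v_{p,\varepsilon}(t)$, one has $U_k(\cdot,p)-g^\varepsilon_k=[\mathbf{1}_{\{u_k>p\}}-h_\varepsilon(u_k)]-[\nu_{t,x}((p,\infty))-v_{p,\varepsilon}(t)]$, where $|\mathbf{1}_{\{u_k>p\}}-h_\varepsilon(u_k)|\le\mathbf{1}_{\{p<u_k<p+\varepsilon\}}$ and $|\nu_{t,x}((p,\infty))-v_{p,\varepsilon}(t)|=\langle\nu_{t,x},\mathbf{1}_{\{\lambda>p\}}-h_\varepsilon(\lambda)\rangle\le\nu_{t,x}((p,p+\varepsilon])$. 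Hence, for every $\Phi\in C_0(\Pi)$, using $u_k\to\nu_{t,x}$ and upper semicontinuity,
$$\limsup_{k\to\infty}\|\Phi(U_k(\cdot,p)-g^\varepsilon_k)\|_{L^2(\Pi)}\le 2\Bigl(\int_\Pi|\Phi|^2\,\nu_{t,x}([p,p+\varepsilon])\,dtdx\Bigr)^{1/2}=:\rho(\varepsilon),$$
and $\rho(\varepsilon)\to0$ as $\varepsilon\to0$ by dominated convergence, since $p\in E$ forces $\nu_{t,x}(\{p\})=0$ for a.e.\ $(t,x)$. Using the triangle inequality for the seminorm $g\mapsto(\int_{\R^{n+1}}|F(\Phi g)(\hat\xi)|^2\psi(\pi(\hat\xi))\,d\hat\xi)^{1/2}$ (with $\psi\in C(S)$, $\psi\ge0$, $\supp\psi\subset S\setminus S_0$), the Plancherel identity, and the defining formulas for $\mu^{pp}$ and $\mu_{p,\varepsilon}$,
$$\bigl(\langle\mu^{pp},|\Phi|^2\psi\rangle\bigr)^{1/2}\le\bigl(\langle\mu_{p,\varepsilon},|\Phi|^2\psi\rangle\bigr)^{1/2}+\|\psi\|_\infty^{1/2}\rho(\varepsilon)=\|\psi\|_\infty^{1/2}\rho(\varepsilon),$$
the first term vanishing by Step~2. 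Letting $\varepsilon\to0$ yields $\langle\mu^{pp},|\Phi|^2\psi\rangle=0$ for all such $\Phi,\psi$, hence $\mu^{pp}(\Pi\times(S\setminus S_0))=0$, and the disintegration $\mu^{pp}(x,\xi)=\mu^{pp}_x(\xi)\,dx$ gives $\mu^{pp}_{t,x}(S\setminus S_0)=0$ a.e., which is what Step~1 requires.

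The main obstacle is Step~3: one must show that the oscillations carried by the jump part $\mathbf{1}_{\{u_k>p\}}-h_\varepsilon(u_k)$ become negligible as $\varepsilon\to0$, so that the H-measure of the discontinuous sequence $U_k(\cdot,p)$ is inherited from that of its continuous approximants, to which Proposition~\ref{pro2} directly applies. This is precisely where the hypothesis $p\in E$ enters — it guarantees $\nu_{t,x}$ has no atom at $p$, and hence the $L^2_{loc}$-size of the jump part tends to $0$. The remaining ingredients — the use of (\ref{det}) and the total-variation continuity of $(p,q)\mapsto\mu^{pq\pm}_{t,x}$ in Step~1, and the persistence of precompactness of $\{u(t,\cdot)\}$ under the Lipschitz maps $h_\varepsilon$ in Step~2 — are routine.
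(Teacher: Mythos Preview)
Your argument is correct and follows the same skeleton as the paper's: reduce to the diagonal via~(\ref{det}), pass from a countable $D\subset E$ to all $p$ by the one-sided continuity of $p\mapsto\mu^{pp\pm}_{t,x}$, invoke Proposition~\ref{pro2} for the rescaled sequence, and disintegrate. The difference lies in \emph{how} Proposition~\ref{pro2} is invoked.

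The paper applies Proposition~\ref{pro2} \emph{directly} to the function $U(t,x,p)=\theta(u(t,x)-p)$, observing that $\mu^{pp}$ is precisely the H-measure of the rescaled sequence $U(k^2t,k^2\tilde x+k\bar x,p)-v(t,p)$. This is shorter, but Proposition~\ref{pro2} requires the family $\{U(t,\cdot,p):t>0\}$ to be pre-compact in $L^2(\T^n)$, and that does \emph{not} follow from pre-compactness of $\{u(t,\cdot)\}$ alone, since $\theta(\cdot-p)$ is discontinuous (e.g.\ $u(t,\cdot)\to p$ in $L^2$ gives no control on $\theta(u(t,\cdot)-p)$). The paper tacitly passes over this point. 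Your Lipschitz approximation $h_\varepsilon$ is exactly what closes this gap: $\{h_\varepsilon(u(t,\cdot))\}$ inherits pre-compactness trivially, so Proposition~\ref{pro2} applies cleanly, and your Step~3 estimate (controlling $\mathbf 1_{\{u_k>p\}}-h_\varepsilon(u_k)$ by the mass $\nu_{t,x}([p,p+\varepsilon])$, which vanishes as $\varepsilon\downarrow 0$ because $p\in E$) transfers the conclusion to the Heaviside cut. So your route is genuinely more careful under the stated hypothesis of Proposition~\ref{proL1}.

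One small refinement in Step~3: requiring $\supp\psi\subset S\setminus S_0$ (or $\psi|_{S_0}=0$) for \emph{continuous} $\psi$ only yields $\mu^{pp}$ concentrated on $\Pi\times\overline{S_0}$, not on $\Pi\times S_0$. The clean fix is to note that your triangle/Plancherel estimate actually gives, for \emph{all} $\psi\in C(S)$,
\[
\bigl|\langle\mu^{pp},|\Phi|^2\psi\rangle-\langle\mu_{p,\varepsilon},|\Phi|^2\psi\rangle\bigr|\le 2\|\psi\|_\infty\|\Phi\|_2\,\rho(\varepsilon),
\]
so the pushforward of $|\Phi|^2(\mu^{pp}-\mu_{p,\varepsilon})$ to $S$ has total variation $\le 2\|\Phi\|_2\rho(\varepsilon)$; evaluating on the Borel set $S\setminus S_0$ and using $\mu_{p,\varepsilon}(\Pi\times(S\setminus S_0))=0$ then gives $\int_{\Pi\times(S\setminus S_0)}|\Phi|^2\,d\mu^{pp}\le 2\|\Phi\|_2\rho(\varepsilon)\to 0$, as desired.
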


\begin{proof}
Let $M=\|u\|_\infty$. Observe that by Lemma~\ref{lem1} for each $g(\lambda)\in C(\R)$ the weak-$*$ limit of $g(u_k)$ does not depend on $x$:
\begin{equation}\label{14}
g(u_k)\mathop{\rightharpoonup}_{k\to\infty} v_g(t) \ \mbox{ weakly-$*$ in } L^\infty(\Pi).
\end{equation}
Let $G\subset C(\R)$ be a countable dense set. Then the set $A\subset\R_+$ of common Lebesgue points of the functions $v_g(t)$, $g\in G$ has full measure.
Obviously, for each $t\in A$ the functional $g\to v_g(t)$ is uniquely extended to a linear continuous functional on the whole $C(\R)$. Therefore, for some compactly supported measure $\nu_t$
$$
v_g(t)=\langle\nu_t,g(\lambda)\rangle  \quad \forall g(\lambda)\in C(\R).
$$
Since $v_g(t)=0$ whenever $g\equiv 0$ on $[-M,M]$, we see that $\supp\nu_t\subset [-M,M]$. By (\ref{14}) we conclude that $\nu_t$ is a limit measure valued function
of the sequence $u_k$. Hence, the measure valued function $\nu_{t,x}=\nu_t$ does not depend on $x$. By (\ref{Hm1}), (\ref{Hm}) we find that the measure
$\mu^{pp}=\mu^{pp}_{t,x}(\hat\xi)dtdx$ coincides with the H-measure $\mu=\mu(t,x,\hat\xi)$ corresponding to the sequence $U_k(t,x,p)=U(k^2t,k^2\tilde x+k\bar x,p)-v(t,p)$, where
$U(t,x,p)=\theta(u(t,x)-p)$ ($\theta(r)$ being the Heaviside function), $v(t,p)=\nu_t((p,+\infty))$ is a weak-$*$ limit of the sequence $U_k(t,x,p)$.
By Proposition~\ref{pro2} we claim that $\mu(\Pi\times(S\setminus S_0))=0$. From (\ref{des}) it follows that for all $f(t,x)\in C_0(\Pi)$, $g(\hat\xi)\in C(S)$
\begin{equation}\label{des1}
\int_\Pi \langle\mu^{pp}_{t,x}(\hat\xi),g(\hat\xi)\rangle f(t,x)dtdx=\int_{\Pi\times S}f(t,x)g(\hat\xi)d\mu(t,x,\hat\xi).
\end{equation}
Obviously, (\ref{des1}) remains valid for Borel functions $g(\hat\xi)$. Taking $g(\hat\xi)$ being the indicator function of the set $S\setminus S_0$, we arrive at the relation
$$
\int_\Pi \langle\mu^{pp}_{t,x}(\hat\xi),g(\hat\xi)\rangle f(t,x)dtdx=0 \quad \forall f(t,x)\in C_0(\Pi).
$$
Therefore, $\langle\mu^{pp}_{t,x}(\hat\xi),g(\hat\xi)\rangle=0$ for almost all $(t,x)\in\Pi$. Now we fix some countable dense set $D\subset E$ and choose a set of full Lebesgue measure $\Pi_0\subset\Pi$ such that $\langle\mu^{pp}_{t,x}(\hat\xi),g(\hat\xi)\rangle=0$ for all $(t,x)\in\Pi_0$ and $p\in D$. By the definition of measures $\mu^{pp+}_{t,x}$, $\mu^{pp-}_{t,x}$, they are, respectively, right- and left-continuous in $M(S)$ with respect to $p\in\R$, for some set $\Pi'\subset\Pi_0$ of full measure of values $(t,x)$. Therefore, $\langle\mu^{pp\pm}_{t,x}(\hat\xi),g(\hat\xi)\rangle=0$, that is, $\mu^{pp\pm}_{t,x}(S\setminus S_0)=0$ for all $p\in\R$, $(t,x)\in\Pi'$. To complete the proof, it only remains to apply relation (\ref{det}).
\end{proof}

\section{The proof of the decay property}\label{secmain}
We assume that $u(t,x)$ is an $x$-periodic e.s. of (\ref{1}), (\ref{2}). As was demonstrated in section~\ref{sec3}, we may suppose that conditions (R1)--(R4) hold.
In view of (\ref{after}) for every $k\in\N$ the function $u_k=u(k^2t,k^2\tilde x+k\bar x)$ is an e.s. of the
equation
\begin{equation}
\label{1k}
u_t+\div_x (\varphi^k(u)-a^k(u)\nabla_x u)=0,
\end{equation}
where $\varphi^k(u)$ is the vector with components $\varphi^k_i(u)=\varphi_i(u)$, $i=1,\ldots,d$, $\varphi^k_i(u)=k\varphi_i(u)$, $i=d+1,\ldots,n$, while the symmetric matrix $a^k(u)$ has the entries $(a^k(u))_{ij}=\epsilon_{ki}\epsilon_{kj}a_{ij}(u)$, $1\le i,j\le n$, where
$\epsilon_{ki}=k^{-1}$ for $1\le i\le d$, $\epsilon_{ki}=1$ for $d<i\le n$.

If $a(u)=b(u)^\top b(u)$ is an admissible representation then
$a^k(u)=b_k(u)^\top b_k(u)$, $(b_k)_{rj}=\epsilon_{kj} b_{rj}$, is an admissible representation for the matrix $a^k(u)$. Let $\alpha,\beta\in\R$, $\alpha<\beta$, $s_{\alpha,\beta}(u)=\max(\alpha,\min(\beta,u))$ be the corresponding cut-off function, $A^k(u)$ be a primitive of the matrix $a^k(u)$. By  Definition~\ref{def1}(i,ii) and Remark~\ref{rem0}
\begin{eqnarray}\label{15}
\sum_{j=1}^n [(A^k(s_{\alpha,\beta}(u_k)))_{ij}]_{x_j}=\sum_{j=1}^n \sum_{r=1}^n (T_{(b_k)_{ri}(u)\chi_{\alpha,\beta}(u)}((B_k)_{rj})(u_k))_{x_j} \nonumber\\
=\sum_{r=1}^n (b_k)_{ri}(u_k)\chi_{\alpha,\beta}(u_k)\div_x (B_k)_r(u_k),
\end{eqnarray}
where the matrix $B_k$ is a primitive of $b_k$, $(B_k)'(u)=b_k(u)$, and $(B_k)_r$
is the vector with components $(B_k)_{rj}(u)$, $j=1,\ldots,n$. By $\chi_{\alpha,\beta}(u)$ we denote the indicator function of the interval $(\alpha,\beta)$. By Lemma~\ref{lemA2}
$\div_x (B_k)_r(u_k)\in L^2_{loc}(\Pi)$, and
\begin{eqnarray}\label{16}
\int_{(\delta,+\infty)\times\T^n}\sum_{r=1}^n |\div_x (B_k)_r(u_k)|^2dtdx\le \nonumber\\
\frac{1}{2}\int_{\T^n} |u_k(\delta,x)|^2dx-\esslim_{T\to+\infty}\frac{1}{2}\int_{\T^n} |u_k(T,x)|^2dx,
\end{eqnarray}
where $\delta>0$ is a common Lebesgue point of the functions $\displaystyle t\to\int_{\T^n} |u_k(t,x)|^2dx$, $k\in\N$. Since the maps $x\to y=k^2\tilde x+k\bar x$ keep the Lebesgue measure on the torus $\T^n$, then for each $t>0$
$$
\int_{\T^n} |u_k(t,x)|^2dx=\int_{\T^n} |u(k^2t,k^2\tilde x+k\bar x)|^2dx=\int_{\T^n} |u(k^2t,y)|^2dy,
$$
and it follows from (\ref{16}) that for almost each $\delta>0$
\begin{eqnarray*}
\int_{(\delta,+\infty)\times\T^n}\sum_{r=1}^n |\div_x (B_k)_r(u_k)|^2dtdx\le \\
\frac{1}{2}\int_{\T^n} |u(k^2\delta,y)|^2dy-\esslim_{T\to+\infty}\frac{1}{2}\int_{\T^n} |u(T,y)|^2dy\mathop{\to}_{k\to\infty} 0,
\end{eqnarray*}
which implies that for all $r=1,\ldots,n$
\begin{equation}\label{17}
\div_x (B_k)_r(u_k)\to 0 \ \mbox{ in } L^2_{loc}(\Pi)
\end{equation}
as $k\to\infty$.
It follows from (\ref{15}) that for $1\le i\le n$ the distributions
$$
\sum_{j=1}^n [(A_k(s_{\alpha,\beta}(u_k)))_{ij}]_{x_j}\to 0 \ \mbox{ in } L^2_{loc}(\Pi)
$$
This implies that
\begin{equation}\label{18}
\sum_{i=1}^n\sum_{j=1}^n [(A^k(s_{\alpha,\beta}(u_k)))_{ij}]_{x_ix_j}\to 0
\end{equation}
as $k\to\infty$ in the Sobolev space $H^{-1}_{loc}(\Pi)$.

Observe that for every $g(u)\in C(\R)$
\begin{eqnarray*}
g(s_{\alpha,\beta}(u))=\sgn^+(u-\alpha)(g(u)-g(\alpha))-\sgn^+(u-\beta)(g(u)-g(\beta))+ \\ g(\alpha)= T_{\sgn^+(u-\alpha)}(g)(u)-T_{\sgn^+(u-\beta)}(g)(u)+\const,
\end{eqnarray*}
where $\sgn^+ u=(\max(u,0))'$ is the Heaviside function. Using this relation and Lemma~\ref{lemA1}, we obtain
\begin{equation}\label{19}
(s_{\alpha,\beta}(u_k))_t+\div_x \varphi^k(s_{\alpha,\beta}(u_k))-D^2\cdot A^k(s_{\alpha,\beta}(u_k))=\mu^k_\beta-\mu^k_\alpha
\end{equation}
in $\D'(\Pi)$, where $\mu^k_r$, $r\in\R$ are nonnegative measures defined by (\ref{distr1}) with $\varphi(u)=\varphi^k(u)$, $A(u)=A^k(u)$, $u=u_k$, and $\eta(u)=(u-r)^+\doteq\max(u-r,0)$. By Corollary~\ref{cor0}(iii)
$$
\mu_r^k(\R_+\times\T^n)\le\int_{\T^n}(u_0(k^2\tilde x+k\bar x)-r)^+dx=\int_{\T^n}(u_0(x)-r)^+dx,
$$
and the sequence $\mu^k_\beta-\mu^k_\alpha$ is bounded in the space of measures $M_{loc}(\Pi)$ equipped with standard locally convex topology generated by semi-norms
$p_K(\mu)=|\mu|(K)$, where $K$ is an arbitrary compact subset of $\Pi$. By Murat's interpolation lemma \cite{Mu}, it follows from (\ref{19}) and (\ref{18}) that the sequence of distributions
\begin{equation}\label{20}
(s_{\alpha,\beta}(u_k))_t+\div_x \varphi^k(s_{\alpha,\beta}(u_k))
\end{equation}
is pre-compact in $H^{-1}_{loc}(\Pi)$.

By requirement (ii) of Definition~\ref{def1}
$$
\div_x (B_k)_r(s_{\alpha,\beta}(u_k))=\chi_{\alpha,\beta}(u_k)\div_x (B_k)_r(u_k) \to 0
$$
as $k\to\infty$ in $L^2_{loc}(\Pi)$ and therefore in $H^{-1}_{loc}(\Pi)$ as well. Since
$$
\sum_{j=d+1}^n B_{rj}(s_{\alpha,\beta}(u_k))_{x_j}=\div_x (B_k)_r(s_{\alpha,\beta}(u_k))-\frac{1}{k}\sum_{j=1}^d B_{rj}(s_{\alpha,\beta}(u_k))_{x_j},
$$
we claim that the sequences of distributions
\begin{equation}\label{21}
\sum_{j=d+1}^n B_{rj}(s_{\alpha,\beta}(u_k))_{x_j}\to 0
\end{equation}
in $H^{-1}_{loc}(\Pi)$ for all $r=1,\ldots,n$. In particular, these sequences are pre-compact in $H^{-1}_{loc}(\Pi)$.

Taking into account Remark~\ref{rem3} and Corollary~\ref{cor2}, we conclude that the sequence $u_k$ converges as $k\to\infty$ to a constant measure valued function $\nu_{t,x}\equiv\nu$ (in the sense of relation (\ref{pr2})~). Let $[a_0,b_0]$ be the smallest segment containing $\supp\nu$. We suppose that $a_0<b_0$ and are going to get a contradiction.

Notice that by Corollary~\ref{cor0}(ii) for a.e. $t>0$
$\displaystyle
\int_{\T^n}u(t,x)dx=\int_{\T^n} u_0(x)dx=I.
$
By Lemma~\ref{lem1} we find that $u_k\rightharpoonup I$ as $k\to\infty$ weakly-$*$ in $L^\infty(\Pi)$. In view of relation (\ref{pr2}) $I=\int\lambda d\nu(\lambda)\in (a_0,b_0)$.
By our assumption (R3) the functions $\varphi_i(u)$, $i=d+1,\ldots,n$, are constant on some interval $(a_1,b_1)\ni I$. Without loss of generality we can assume that
$(a_1,b_1)\subset (a_0,b_0)$.

\begin{lemma}\label{lem3}
There exists an interval $(a_2,b_2)\subset (a_1,b_1)$ such that
\begin{equation}\label{22}
\int s_{a_2,b_2}(\lambda)d\nu(\lambda)=I.
\end{equation}
\end{lemma}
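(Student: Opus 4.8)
The plan is to avoid trying to grow a truncation interval out of one of the endpoints $a_1$ or $b_1$ (which may fail, see below), and instead to \emph{slide a short interval of fixed length} and invoke the intermediate value theorem. Recall that $\nu$ is a probability measure, that $I=\int\lambda\,d\nu(\lambda)$, and that the open interval $(a_1,b_1)$ contains $I$. I will fix $\ell\in\bigl(0,\min(I-a_1,\,b_1-I)\bigr)$, introduce
\[
g_\ell(c)=\int s_{c-\ell/2,\,c+\ell/2}(\lambda)\,d\nu(\lambda),\qquad c\in\R,
\]
and show that $g_\ell$ attains the value $I$ at some $c_\ell$ with $|c_\ell-I|\le\ell/2$; then the interval $(a_2,b_2)=(c_\ell-\ell/2,\,c_\ell+\ell/2)$ will do the job.

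There will be two points to verify. First, $g_\ell$ is continuous in $c$: for fixed $\lambda$ the function $c\mapsto s_{c-\ell/2,\,c+\ell/2}(\lambda)=\max(c-\ell/2,\min(c+\ell/2,\lambda))$ is continuous, and on a bounded set of $c$ it is bounded uniformly in $\lambda$, so continuity of $g_\ell$ follows from the dominated convergence theorem because $\nu$ is finite. Second, the two sign estimates: at $c=I-\ell/2$ the truncation interval is $(I-\ell,I)$ and $s_{I-\ell,I}(\lambda)\le I$ for every $\lambda$, whence $g_\ell(I-\ell/2)\le\int I\,d\nu=I$; symmetrically $s_{I,I+\ell}(\lambda)\ge I$ for every $\lambda$, so $g_\ell(I+\ell/2)\ge I$. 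Applying the intermediate value theorem on $[I-\ell/2,\,I+\ell/2]$ yields $c_\ell$ in that segment with $g_\ell(c_\ell)=I$.

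Finally I would check the inclusion: with $a_2=c_\ell-\ell/2$, $b_2=c_\ell+\ell/2$ one has $b_2-a_2=\ell>0$, so $(a_2,b_2)$ is a genuine interval, while $a_2\ge I-\ell>a_1$ and $b_2\le I+\ell<b_1$ by the choice of $\ell$; hence $(a_2,b_2)\subset(a_1,b_1)$, and $\int s_{a_2,b_2}(\lambda)\,d\nu(\lambda)=g_\ell(c_\ell)=I$, which is (\ref{22}). The one conceptual point — and the reason a one–parameter argument fails — is that one must keep \emph{both} ends of the truncation interval free: if $\nu$ carried substantial mass just above $b_1$, then fixing $a_2=a_1$ and letting $b_2$ vary over $(a_1,b_1)$ would never let the truncated mean reach $I$. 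Sliding an interval of fixed small length sidesteps this, since the two ends of the parameter segment are the intervals $(I-\ell,I)$ and $(I,I+\ell)$, which automatically produce truncated means lying on opposite sides of $I$; note also that the argument uses only $\nu(\R)=1$ and $I\in(a_1,b_1)$, the identity $I=\int\lambda\,d\nu$ having already been used to place $I$ strictly inside the relevant intervals.
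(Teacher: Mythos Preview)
Your proof is correct and takes a genuinely different route from the paper's.

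The paper argues by a case split on the sign of $I_1-I$, where $I_1=\int s_{a_1,b_1}(\lambda)\,d\nu(\lambda)$: if $I_1<I$ it fixes the right endpoint $b_2=b_1$ and varies $a$ over $[a_1,I]$, using $f(a_1)=I_1<I$ and $f(I)>I$ (the latter needs $\nu((I,\infty))>0$, which holds because $b_0>I$ lies in $\supp\nu$); if $I_1>I$ it fixes $a_2=a_1$ and varies $b$. So a one-parameter argument \emph{does} work, provided one first reads the sign of $I_1-I$ and then chooses the correct endpoint to freeze; your remark that ``a one-parameter argument fails'' applies only to the non-adaptive version that fixes $a_2=a_1$ regardless.

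Your sliding-interval argument avoids the case split entirely and is slightly more robust: the inequalities $g_\ell(I-\ell/2)\le I\le g_\ell(I+\ell/2)$ use only that $\nu$ is a probability measure and that $I\in(a_1,b_1)$, whereas the paper's strict inequalities $f(I)>I$ and $g(I)<I$ rely on the ambient hypothesis that $\supp\nu$ has points on both sides of $I$. Your construction also yields a strictly smaller subinterval $(a_2,b_2)\subsetneq(a_1,b_1)$, which is not needed but does no harm.
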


\begin{proof}
Denote $I_1=\int s_{a_1,b_1}(\lambda)d\nu(\lambda)$. If $I_1=I$, there is nothing to prove, we set $(a_2,b_2)=(a_1,b_1)$. If $I_1<I$, then the continuous function
$f(a)=\int s_{a,b_1}(\lambda)d\nu(\lambda)$ takes values $f(a_1)=I_1<I$, $f(I)>I$. Therefore, there exists $a_2\in (a_1,I)$ such that $f(a_2)=I$. Taking $b_2=b_1$, we conclude that
(\ref{22}) holds. In the case $I_1>I$ we consider the continuous function $g(b)=\int s_{a_1,b}(\lambda)d\nu(\lambda)$ and observe that $g(I)<I$, $g(b_1)=I_1>I$. Therefore, there is a value $b_2\in (I,b_1)$ such that $g(b_2)=I$ and (\ref{22}) follows with $a_2=a_1$.
\end{proof}

Let $(a_2,b_2)$ be an interval indicated in Lemma~\ref{lem3}. We consider the sequence $v_k=s_{a_2,b_2}(u_k)$. In correspondence with (\ref{pr2})
$$
v_k\rightharpoonup \int s_{a_2,b_2}(\lambda)d\nu(\lambda)=I.
$$
Observe that in the case when $(\alpha,\beta)\cap (a_2,b_2)=(\alpha_1,\beta_1)\not=\emptyset$
$$s_{\alpha,\beta}(v_k)=s_{\alpha_1,\beta_1}(u_k).$$
Otherwise, $s_{\alpha,\beta}(v_k)\equiv c=\const$.
Since the flux functions $\varphi_i(u)$, $i=d+1,\ldots,n$, are constant on the segment $[a_2,b_2]$, we conclude that $\varphi_i(s_{\alpha,\beta}(v_k))_{x_i}=0$ and in view of (\ref{20}) for all $\alpha,\beta\in\R$, $\alpha<\beta$ the sequences of distributions
\begin{equation}\label{23}
(s_{\alpha,\beta}(v_k))_t+\sum_{i=1}^d \varphi_i(s_{\alpha,\beta}(v_k))_{x_i}
\end{equation}
are pre-compact in $H^{-1}_{loc}(\Pi)$. Similarly, relation (\ref{21}) implies that the sequences
\begin{equation}\label{21a}
\sum_{j=d+1}^n B_{rj}(s_{\alpha,\beta}(v_k))_{x_j}
\end{equation}
are pre-compact in $H^{-1}_{loc}(\Pi)$ as well.

 As follows from relation (\ref{pr2}), the limit measure valued function for the sequence $v_k$ is a constant measure valued function $\nu_{t,x}\equiv\nu_1=s_{a_2,b_2}^*\nu$ is push-forward measure of $\nu$ under the map $s_{a_2,b_2}(\lambda)$. Since $[a_2,b_2]\subset [a_0,b_0]$, $[a_2,b_2]$ is the minimal segment containing $\supp\nu_1$. Passing to a subsequence if necessary, we can consider H-measures $\mu^{pq\pm}_{t,x}$ corresponding to the sequence $v_k$. Let $p\in (a_2,b_2)$,
$H_+(t,x)$, $H_-(t,x)$ be the linear spans of supports $\supp\mu^{pp+}_x$ and $\supp\mu^{pp-}_x$, respectively. As follows from Proposition~\ref{proL2} and pre-compactness of sequences (\ref{23}), (\ref{21a}), for a.e. $(t,x)\in\Pi$ for all $(\tau,\xi)\in H_+(t,x)\cap H_-(t,x)$ the functions
\begin{eqnarray}\label{24}
u\to \tau u+\varphi(u)\cdot\tilde\xi= \tau u+\sum_{j=1}^d\varphi_j(u)\xi_j=\const; \\
\label{25}
u\to B_r(u)\cdot\bar\xi=\sum_{j=d+1}^n B_{rj}(u)\xi_j=\const,  \quad r=1,\ldots,n,
\end{eqnarray}
in some vicinity of $p$. For fixed $(t,x)$ we denote $H_\pm=H_\pm(t,x)$ and introduce the sets $S_\pm=H_\pm\cap S_0$. Since $S\setminus S_\pm\subset (S\setminus H_\pm)\cap (S\setminus S_0)$ then
$$
\mu^{pp\pm}_{t,x}(S\setminus S_\pm)\le \mu^{pp\pm}_{t,x}(S\setminus H_\pm)+\mu^{pp\pm}_{t,x}(S\setminus S_0)=0,
$$
where we use Proposition~\ref{proL1}.
We remark that by Corollary~\ref{cor1} the family $u(t,\cdot)$, $t>0$ is pre-compact in $L^2(\T^n)$, and the conditions of Proposition~\ref{proL1} are actually satisfied.

Applying Lemma~\ref{lem2}, we obtain that for a.e. $(t,x)\in\Pi$ the set $H_+(t,x)\cap H_-(t,x)\cap S_0$ is not empty and for all $(\tau,\xi)\in H_+(t,x)\cap H_-(t,x)\cap S_0$
identities (\ref{24}), (\ref{25}) hold. In particular, we can take $p=I\in (a_2,b_2)$. Then it follows from (\ref{24}) and assumption (R2) that $\tilde\xi=0$, $\tau=0$. Further,
in view of (\ref{25}) $b(u)\bar\xi=\frac{d}{du}B(u)\bar\xi=0$ in some vicinity of $I$. This implies that $a(u)\bar\xi=b(u)^\perp b(u)\bar\xi=0$ a.e. in this vicinity. By assumption (R4), we claim that $\bar\xi=0$. Hence, $\hat\xi=(\tau,\xi)=0$, which contradicts to the condition $\hat\xi\in S$. We conclude that $a_0=b_0=I$, that is, $\nu=\delta(\lambda-I)$. This means that the limit measure valued function $\nu_{t,x}(\lambda)\equiv\nu(\lambda)=\delta(\lambda-I)$ of the sequence $u_k$ is regular, and by Theorem~\ref{thT}, $u_k\to I$ as $k\to\infty$ in $L^1_{loc}(\Pi)$. This implies that for a.e. $t>0$ $u_k(t,\cdot)\to I$ as $k\to\infty$ in $L^1(\T^n)$.  We fix such $t=t_0$. Then
\begin{eqnarray}\label{26}
\int_{\T^n}|u(k^2t_0,x)-I|dx=\int_{\T^n} |u(k^2t_0,k^2\tilde x+k\bar x)-I|dx=\nonumber\\ \int_{\T^n} |u_k(t_0,x)-I|dx\mathop{\to}_{k\to\infty} 0.
\end{eqnarray}
For $t>k^2t_0$ we use Lemma~\ref{lemA3} for e.s. $u_1=u$, $u_2\equiv I$ and find that for almost each such $t$
$$
\int_{\T^n}|u(t,x)-I|dx\le\int_{\T^n} |u(k^2t_0,x)-I|dx,
$$
which, together with (\ref{26}), implies the desired decay relation (\ref{dec}).

\medskip
\begin{remark}
Using methods of paper \cite{PaJHDE}, we can extend our results to the case of almost periodic (in the Besicovitch sense) initial function $u_0(x)$.
Let
$$ C_R=\{ \ x=(x_1,\ldots,x_n)\in\R^n \ | \ |x|_\infty=\max_{i=1,\ldots,n}|x_i|\le R/2 \ \}, \quad R>0,$$

$$N_1(u)=\limsup_{R\to +\infty} R^{-n}\int_{C_R} |u(x)|dx
$$
be the mean $L^1$-norm of a function $u(x)\in L^1_{loc}(\R^n)$.
Recall, (~see \cite{Bes}~) that Besicovitch space $\B^1(\R^n)$ is the closure of trigonometric polynomials, i.e., finite sums $\sum a_\lambda e^{2\pi i\lambda\cdot x}$, with ${i^2=-1}$, $\lambda\in\R^n$, in the quotient space $B^1(\R^n)/B^1_0(\R^n)$, where
$$
B^1(\R^n)=\{ u\in L^1_{loc}(\R^n) \ | \ N_1(u)<+\infty \}, \ B^1_0(\R^n)=\{ u\in L^1_{loc}(\R^n) \ | \ N_1(u)=0 \}.
$$
The space $\B^1(\R^n)$ is a Banach space equipped with the norm $\|u\|_1=N_1(u)$.

It is known \cite{Bes} that for each $u\in \B^1(\R^n)$ there exist the mean value
$$\dashint_{\R^n} u(x)dx\doteq\lim\limits_{R\to+\infty}R^{-n}\int_{C_R} u(x)dx$$ and, more generally,  the Bohr-Fourier coefficients
$$
a_\lambda=\dashint_{\R^n} u(x)e^{-2\pi i\lambda\cdot x}dx, \quad\lambda\in\R^n.
$$
The set
$$ Sp(u)=\{ \ \lambda\in\R^n \ | \ a_\lambda\not=0 \ \} $$ is at most countable and is called the spectrum of an almost periodic function $u$.
We denote by $M(u)$ the smallest additive subgroup of $\R^n$ containing $Sp(u)$.

Suppose that $u_0\in\B^1(\R^n)\cap L^\infty(\R^n)$, $\displaystyle I=\dashint_{\R^n} u_0(x)dx$, $M_0=M(u_0)$, and that $u=u(t,x)$ is an e.s. of (\ref{1}), (\ref{2}).
Arguing as in \cite{PaJHDE}, we may conclude that $u(t,\cdot)\in\B^1(\R^n)$ and that $M(u(t,\cdot))\subset M_0$ for a.e. $t>0$. The decay property is modified as follows.
\begin{theorem}\label{thM1}
Assume that  for all $\xi\in M_0$, $\xi\not=0$ there is no vicinity of $I$, where simultaneously the function $\xi\cdot\varphi(u)$ is affine and the function $a(u)\xi\cdot\xi\equiv 0$. Then
$$
\esslim_{t\to+\infty} u(t,x)=I \ \mbox{ in } \B^1(\R^n).
$$
\end{theorem}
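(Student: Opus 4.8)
The plan is to deduce Theorem~\ref{thM1} from the already established periodic result, Theorem~\ref{thM}, in two steps: a reduction from a countable spectrum group to finitely generated ones by Besicovitch approximation, and, for a finitely generated group, an identification of the almost periodic problem with a periodic problem on a torus of possibly larger dimension. Throughout I will use the Besicovitch analogues of the periodic a priori facts, obtained by the methods of \cite{PaJHDE}: that $u(t,\cdot)\in\B^1(\R^n)$ with $M(u(t,\cdot))\subset M_0$ and $\dashint u(t,\cdot)=I$ for a.e. $t>0$ (the $\B^1$-versions of Corollary~\ref{cor0}); the existence of an almost periodic e.s. for almost periodic bounded data (by the regularization procedure, as in the periodic case); and the $L^1$-contraction in the mean norm: if $u_1,u_2$ are almost periodic e.s. of (\ref{1}), (\ref{2}) then $N_1(u_1(t,\cdot)-u_2(t,\cdot))\le N_1(u_{10}-u_{20})$ for a.e. $t>0$ (the $\B^1$-version of Lemma~\ref{lemA3}).

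First I would settle the case $Sp(u_0)\subset M$, where $M=\langle\lambda_1,\dots,\lambda_m\rangle$ is finitely generated and $\lambda_1,\dots,\lambda_m$ is a basis of the free abelian group $M$ (equivalently, $\Z$-linearly independent vectors of $\R^n$). Put $\ell(x)=(\lambda_1\cdot x,\dots,\lambda_m\cdot x)\in\R^m$; the map sending a trigonometric polynomial $w=\sum_\kappa c_\kappa e^{2\pi i(\sum_l\kappa_l\lambda_l)\cdot x}$ to $\hat w(y)=\sum_\kappa c_\kappa e^{2\pi i\kappa\cdot y}$ on $\T^m=\R^m/\Z^m$ satisfies $w=\hat w\circ\ell$ and, because $\lambda_1,\dots,\lambda_m$ are $\Z$-independent (so $\ell$ is equidistributed mod $\Z^m$), $N_1(w)=\|\hat w\|_{L^1(\T^m)}$ and $\|w\|_\infty=\|\hat w\|_{L^\infty(\T^m)}$; it therefore extends to isometries of the corresponding Besicovitch classes onto $L^1(\T^m)$ and $L^\infty(\T^m)$. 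Since $\partial_{x_i}$ corresponds on the Fourier side to $\sum_{l=1}^m\lambda_l^{(i)}\partial_{y_l}$ ($\lambda_l^{(i)}$ being the $i$-th component of $\lambda_l$), the problem (\ref{1}), (\ref{2}) for data with spectrum in $M$ is equivalent to the torus problem
\[
\hat u_t+\div_y\big(\Psi(\hat u)-\Lambda a(\hat u)\Lambda^\top\nabla_y\hat u\big)=0,\qquad \hat u(0,\cdot)=\hat u_0,
\]
on $\R_+\times\T^m$, where $\Lambda$ is the matrix with rows $\lambda_1,\dots,\lambda_m$, $\Psi_l(v)=\lambda_l\cdot\varphi(v)$, and $\Lambda a\Lambda^\top$ is given the admissible representation $(b\Lambda^\top)^\top(b\Lambda^\top)$ (one must allow, generalizing the extension of admissible representations mentioned after Definition~\ref{def1}, the representing matrix $b\Lambda^\top$ to be rectangular). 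Checking that this equivalence carries entropy solutions to entropy solutions and back is done, as for the invariance recorded around (\ref{after}), by the chain-rule computation together with a smooth-approximation passage to the limit. Now for $\kappa\in\Z^m$ one has $\kappa\cdot\Psi(v)=\xi\cdot\varphi(v)$ and $(\Lambda a\Lambda^\top)\kappa\cdot\kappa=a(v)\xi\cdot\xi$ with $\xi=\sum_l\kappa_l\lambda_l\in M\subset M_0$; hence if for some $\kappa\ne0$ the function $\kappa\cdot\Psi$ were affine and $(\Lambda a\Lambda^\top)\kappa\cdot\kappa\equiv0$ in a vicinity of $I$, the hypothesis of Theorem~\ref{thM1} would give $\xi=0$ and then, by $\Z$-independence, $\kappa=0$ — a contradiction. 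So the nonlinearity--diffusivity condition holds for the torus problem, whose mean value is again $I$ (the $0$-th Fourier coefficient), and Theorem~\ref{thM} yields $\esslim_{t\to+\infty}\hat u(t,\cdot)=I$ in $L^1(\T^m)$, i.e. $\esslim_{t\to+\infty}u(t,\cdot)=I$ in $\B^1(\R^n)$.

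For a general countable $M_0$, approximate $u_0$ in $\B^1(\R^n)$ by trigonometric polynomials $u_0^{(j)}$ (for instance Bochner--Fej\'er means of $u_0$): these have finite spectrum, constant term $I$, and $\|u_0^{(j)}\|_\infty\le\|u_0\|_\infty$, and $M_j:=M(u_0^{(j)})$ is a finitely generated subgroup of $M_0$, so the hypothesis of Theorem~\ref{thM1} holds with $M_j$ in place of $M_0$. Let $u^{(j)}$ be an almost periodic e.s. of (\ref{1}), (\ref{2}) with data $u_0^{(j)}$; by the previous step $\esslim_{t\to+\infty}u^{(j)}(t,\cdot)=I$ in $\B^1(\R^n)$. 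Applying the $\B^1$-contraction to $u$ and $u^{(j)}$ gives $N_1(u(t,\cdot)-u^{(j)}(t,\cdot))\le\|u_0-u_0^{(j)}\|_{\B^1}$, hence
\[
N_1\big(u(t,\cdot)-I\big)\le\|u_0-u_0^{(j)}\|_{\B^1}+N_1\big(u^{(j)}(t,\cdot)-I\big);
\]
taking $\esslimsup_{t\to+\infty}$ and then letting $j\to\infty$ yields $\esslim_{t\to+\infty}u(t,\cdot)=I$ in $\B^1(\R^n)$, which is the assertion of Theorem~\ref{thM1}.

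The routine parts are the Besicovitch a priori estimates imported from \cite{PaJHDE} and the trigonometric-polynomial approximation. The main obstacle I anticipate is making the identification of the second step airtight: one must verify that the isometry $w\mapsto\hat w$ carries almost periodic entropy solutions of (\ref{1}), (\ref{2}) to entropy solutions of the torus problem and back — in particular that conditions (i)--(iii) of Definition~\ref{def1}, which involve the chain rules (\ref{cr}) and the defect measures, survive when $\Lambda$ is not square (the case where $M$ has rank greater than $n$, so that $w=\hat w\circ\ell$ must be read through smooth approximation rather than literal composition) — and that the mean $L^1$-norm on the relevant class of almost periodic functions indeed coincides with the $L^1(\T^m)$-norm. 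Both points reduce, via smooth approximation and equidistribution, to the $\Z$-independence of the chosen basis $\lambda_1,\dots,\lambda_m$ of $M$, exactly as in \cite{PaJHDE}; alternatively, one could bypass the lifting and rerun Sections~\ref{sec3}--\ref{secmain} with the mean value $\dashint$ and the Bohr--Fourier series in place of integration over $\T^n$ and ordinary Fourier series.
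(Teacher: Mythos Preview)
Your proposal is correct and follows precisely the route the paper indicates: the paper does not prove Theorem~\ref{thM1} but states it in a closing remark and refers to the methods of \cite{PaJHDE}, which are exactly the two steps you describe --- lifting the finitely generated case to a periodic problem on a higher-dimensional torus and then covering the general case by Bochner--Fej\'er approximation combined with the $\B^1$-contraction. You have also correctly isolated the one point where the parabolic setting requires a minor extension beyond \cite{PaJHDE} and beyond the periodic framework of this paper: the lifted diffusion matrix $\Lambda a\Lambda^\top$ admits the representation $(b\Lambda^\top)^\top(b\Lambda^\top)$ with $b\Lambda^\top$ an $n\times m$ rectangular matrix, so Definition~\ref{def1} and the proofs of Section~\ref{secmain} must be read with the index $r$ running over $1,\ldots,n$ while the space has dimension $m$; this is harmless, since every use of $b$ in the paper (conditions (i)--(iii), Lemma~\ref{lemA2}, relations (\ref{15})--(\ref{21})) only sums over the rows of $b$ and never needs $b$ to be square.
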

\end{remark}

\section{Acknowledgements}
This work was supported by the Ministry of Education and Science of the Russian  Federation (project no. 1.445.2016/1.4) and by the Russian Foundation for Basic Research (grant 18-01-00258-a.)

\end{document}